\documentclass[12pt]{article}
\pdfoutput=1
\usepackage{amsmath,amssymb,amsthm,amsfonts,mathrsfs}
\usepackage{wasysym,latexsym,times,lineno,color}
\usepackage{geometry}
\usepackage{graphicx,graphics}
\usepackage{epsfig}
\usepackage{subfigure}
\usepackage{epstopdf}
\usepackage{cite}
\usepackage{hyperref}
\usepackage{cleveref}

\newtheorem{theorem}{Theorem}[section]
\newtheorem{corollary}{Corollary}[section]

\newtheorem{lemma}[theorem]{Lemma}
\newtheorem{proposition}{Proposition}[section]

\theoremstyle{definition}

\begin{document}
\title{Dynamics of A Single Population Model with Memory Effect and Spatial Heterogeneity
}
\author{ Yujia Wang\textsuperscript{1},\;  Chuncheng Wang\textsuperscript{2},\; Dejun Fan\textsuperscript{3}
\\
{\small \textsuperscript{1,2} Department of Mathematics, Harbin Institute of Technology,\hfill{\ }}\\
\ \ {\small Harbin, Heilongjiang, 150001,  China.  \hfill{\ }}\\
{\small \textsuperscript{3} Department of Mathematics, Harbin Institute of Technology (Weihai), \hfill{\ }}\\
\ \ {\small Weihai, Shandong, 264209, China. \hfill {\ }}\\
}

\date{}

\maketitle
\begin{abstract}
In this paper, a single population model with memory effect and the heterogeneity of the environment, equipped with the Neumann boundary, is considered. The global existence of a spatial nonhomogeneous steady state is proved by the method of upper and lower solutions, which is asymptotically stable for relatively small memorized diffusion. However, after the memorized diffusion rate exceeding a critical value,
spatial inhomogeneous periodic solution can be generated through Hopf bifurcation, if the integral of intrinsic growth rate over the domain is negative. Such phenomenon will never happen, if only memorized diffusion or spatially heterogeneity is presented, and therefore must be induced by their joint effects. This indicates that the memorized diffusion will bring about spatial-temporal patterns in the overall hostile environment. When the integral of intrinsic growth rate over the domain is positive, it turns out that the steady state is still asymptotically stable. Finally, the possible dynamics of the model is also discussed, if the boundary condition is replaced by Dirichlet condition.
\end{abstract}
{\bf Keywords:} memorized-diffusion, spatial heterogeneity, stability, Hopf bifurcation.

\section{Introduction}
Investigating the population dynamics has long been and will continue to be one of the dominant themes in ecology. In the process of mathematical modeling on this topic, the diffusion process and spatial heterogeneity have been often considered in studying the population dynamics \cite{Lam2014,diffusion,Lou2006,He1,m2}. The most simple model among these may read
\begin{equation}\label{proto}
\frac{\partial{u}}{\partial{t}}=d_1\Delta u+u(m(x)-u),\quad\quad x\in\Omega, t>0,
\end{equation}
where $d_1$ is the Fickian diffusion coefficient, ${\Omega}\subset\mathbb{R}^n$ is bounded and $m(x)$ describes the rate at which the population would grow or decline at the location $x$. However, it is suggested in \cite{ZongShu} that the memory effect should be incorporated in the diffusion process, especially for the highly developed animals. For this reason, we shall study the following model
\begin{equation}
\begin{cases}
\frac{\partial{u}}{\partial{t}}=d_1\Delta u+d_2\nabla\cdot(u\nabla u_r)+u(m(x)-u),\quad &x\in\Omega, t>0,\\
\partial_{\nu}u=0,\quad &x\in\partial\Omega, t>0,
\end{cases}
\label{m1}
\end{equation}
Here, the effect of memory on the spatial movement is characterized by the term $d_2\nabla\cdot(u\nabla u_r)$, and its derivation can be found in \cite{chuncheng}. The coefficient $d_2\in\mathbb{R}$ is the memory-based diffusion coefficient and $u_r=u(x,t-r)$ with $r>0$ representing the averaged memory period.

For \eqref{m1} with $d_2=0$, if $\overline{m}:=\frac{1}{|\Omega|}\int_\Omega m\mathrm{d}x<0$, it is well known that there is a non-homogeneous steady state $\bar{u}(x)$ for $0<d_1<\lambda_1(m)$, which is globally asymptotically stable, meaning that the population can survive only for small diffusion rate. When $\overline{m}>0$, $\bar{u}(x)$ exists for any $d_1>0$ and is also globally stable, see \cite{CCBOOK}.  Moreover, the detailed profiles of $\bar{u}(x)$ for both small and large diffusion coefficients are characterized in \cite{LouY}, showing that, $\bar{u}(x)$ tends to $m_+(x):=\max\{m(x),0\}$ ($\overline{m}$ resp.) when $d_1\rightarrow0$ ($d_1\rightarrow+\infty$ resp.). In addition, the total amount of population for the steady state $\bar{u}(x)$ is greater than $|\Omega|\bar{m}$ for any $d_1>0$ in this case.
Similar results can be also obtained if the reaction term in \eqref{m1} is replaced by $r(x)u(1-u/K(x))$, under some further conditions, see \cite{DeAngelis, He2019}.  When the maturation delay is considered, the existence of periodic solution is studied via Hopf bifurcation analysis in \cite{Song}. In \cite{controlC,controlL,Ding}, the authors focus on the optimal choice of $m(x)$ in \eqref{m1} with $d_2=0$ such that the steady state $\bar{u}(x)$ has the optimal spatial arrangement. For more reaction diffusion models with spatial heterogeneity, we refer the readers to  \cite{Ni2011,Lou2008,Lou-pre,Du2,Lam,He} and references therein.

For \eqref{m1} with $d_2\neq0$ and $m(x)=\text{constant}$, it has been shown in \cite{chuncheng} that the local stability of constant steady state is completely determined by the ratio of $d_1$ and $d_2$, but independent of memory delay $r$. However, if a maturation delay is considered in the reaction term, it turns out that the interaction of these two delays could induce stable spatially inhomogeneous periodic solutions through Hopf bifurcation, see \cite{Cdelay}. In \cite{nonlocal}, more rich dynamics are detected through higher codimension bifurcation analysis, such as Turing-Hopf and double Hopf bifurcations, when the nonlocal effect is considered in the reaction term.

The purpose of this paper is to investigate the model \eqref{m1}, particularly on the impact of memorized diffusion on dynamics of \eqref{proto}, under Neumann boundary condition. For convenience, by letting $t=\frac{\tilde{t}}{d_1}$, $\lambda=\frac{1}{d_1}$, $D=\frac{d_2}{d_1}$ and $\tau=rd_1$, the model \eqref{m1} can be transformed into:
\begin{equation}
\begin{cases}
\frac{\partial{u}}{\partial{t}}=\Delta u+D\nabla\cdot(u\nabla u_{\tau})+\lambda u(m(x)-u),\quad &x\in\Omega, t>0,\\
\partial_{\nu}u=0,\quad &x\in\partial\Omega, t>0.
\end{cases}
\label{model}
\end{equation}
We study the dynamics of \eqref{model} for the following two cases:
\begin{itemize}
  \item[\bf{(A1)}] $m(x)\in C^{\alpha}(\overline{\Omega})\left(\alpha\in(0,1)\right)$ is positive on a subset in $\Omega$, satisfying $\int_{\Omega}m(x)\mathrm{d}x<0$;
  \item[\bf{(A2)}] $m(x)\in C^{\alpha}(\overline{\Omega})$, $m(x)\not\equiv$ constant and
    $\int_{\Omega}m(x)\mathrm{d}x\geqslant0$.
\end{itemize}

For \eqref{model}, by studying the locally stability of the steady states,
we found that the positive steady states are always locally asymptotically stable for relatively small diffusion rate $D$ for both cases of $\bf{(A1)}$ and $\bf{(A2)}$. This means that small memorized diffusion rate $d_2$ does not affect local stability of the positive steady state, which can be expected from the results in \cite{CCBOOK}. However, when $D$ exceeds a critical value $\bar{D}$, the scenario will be different, and new dynamics can be induced by the effect of memory. Specifically, in the case of $\bf{(A1)}$, it will be shown that the increment of memory delay $\tau$ will cause the occurrence of Hopf bifurcation for \eqref{model}, leading to the existence of spatially inhomogeneous periodic solution, as long as $D>\bar{D}$.
We emphasize that such phenomenon must be induced by the combination of memory delay $\tau$ and $m(x)$, since it was already known that only one of these two factors will not drive \eqref{model} to generate stable spatial-temporal patterns, see \cite{chuncheng,CCBOOK}. For case $\bf{(A2)}$, we show that the stability of positive steady state only depends on the diffusion coefficient $D$, but is independent of $\tau$, which coincides with the stability results in \cite{chuncheng} where $m(x)$ is constant.

As in the reaction diffusion equation with Dirichlet boundary condition, the fact of spatial heterogeneity in the model \eqref{model} will also make the steady state to be inhomogeneous. For this reason, the corresponding characteristic equation is usually a complicated elliptic problem. In \cite{huang}, a method based on implicit function theorem is proposed to detect the eigenvalues with zero real parts for such problem, which has been widely used in various models, see\cite{shan2,shan1,shan3,guo1,guo2,shanNM}.
Recently, this method is also extended to equation with memorized diffusion and Dirichlet boundary condition in \cite{anqi}. However, a lot more prior estimations are required, since a non self-conjugate operator and time delay are involved in the characteristic equation. For studying the local stability and bifurcation of positive steady state of \eqref{model}, we mainly used the method in \cite{anqi}. But, the techniques are totally different for proving the prior estimation of the eigenvalues, in the case of ${\bf(A2)}$.

The paper is organized as follows. In Section 2, we present the main results on \eqref{model} for both cases $\bf{(A1)}$ and $\bf{(A2)}$. Numerical simulations are also provided in this section. Sections 3, 4 and 5 are devoted to the proofs of main results. In section 6, the dynamics of \eqref{model}, as well as numerical outcomes, are discussed, when the boundary condition of \eqref{model} is replaced by Dirichlet condition.

\section{Main Results}

Let $X=H^2({\Omega}), Y=L^2(\Omega)$ and $\emph{C}=C([-\tau,0],Y)$. For any space $Z$, we define the complexification of $Z$ by $Z_{\mathbb{C}}:=Z\oplus iZ=\{x_1+ix_2|x_1,x_2\in Z\}$.
For the complex-valued Hilbert space $Y_{\mathbb{C}}$, the inner product is $\langle u,v \rangle=\int_{\Omega}\overline{u}(x)v(x)\mathrm{d}x$.

Considering the eigenvalue problem
\begin{equation}
\begin{cases}
-\Delta u(x)=\lambda m(x)u(x),\quad & x\in\Omega,\\
\partial_{\nu}u(x)=0,\quad & x\in\partial{\Omega}.
\end{cases}
\label{eigenvalue prombel}
\end{equation}
From \cite{CCBOOK}, it has a unique positive principal eigenvalue $\lambda_*$ with a positive nonconstant eigenfunction $\phi(x)\in $ $H^2(\overline{\Omega})$ under $\bf{(A1)}$, and a zero eigenvalue $\lambda_*=0$ with positive constant eigenfunction $\phi$ under $(\bf{A2})$. Note that $\phi(x)\in C^{2+\alpha}(\overline{\Omega})$ can be deduced from embedding theorems \cite{adams2003sobolev} and regularity theory for elliptic equations \cite{Gilbarg2001}. Without loss of generally, we assume that $\|\phi\|_{Y}=1$.

\begin{theorem}\label{existence}
Assume that
$$
\text{$\bf(O)$}\quad D>-\frac{1}{\max\limits_{x\in\overline{\Omega}}\{m(x)\}}.
$$
Then,
\begin{itemize}
\item[$(1)$] For case $\bf(A1)$,
\eqref{model} has a positive equilibrium $u_{\lambda}$ for $\lambda>\lambda_*$;
\item[$(2)$] For case $\bf(A2)$,
\eqref{model} has a positive equilibrium $u_{\lambda}$ for $\lambda>0$;
\end{itemize}
Moreover, $u_\lambda$ is unique provided that $D>0$, and the total population size of species satisfies
\begin{equation}\label{inequality}
\int_{\Omega}u_{\lambda}\mathrm{d}x>\int_{\Omega}m(x)\mathrm{d}x.
\end{equation}
\end{theorem}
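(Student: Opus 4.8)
The plan is to reduce the statement to a scalar semilinear elliptic problem. At any steady state $u_\tau=u$, and since $\nabla\cdot(u\nabla u)=\tfrac12\Delta(u^{2})$, a positive equilibrium of \eqref{model} is precisely a positive solution of
\[
\Delta\!\bigl(u+\tfrac{D}{2}u^{2}\bigr)+\lambda u\bigl(m(x)-u\bigr)=0\ \text{ in }\Omega,\qquad \partial_{\nu}u=0\ \text{ on }\partial\Omega .
\]
Set $h(s)=s+\tfrac{D}{2}s^{2}$. Under \textbf{(O)}, $h$ is a strictly increasing bijection of $[0,\infty)$ when $D\geqslant0$, and of $[0,1/|D|)$ onto $[0,1/(2|D|))$ when $D<0$, with smooth inverse $\psi$ satisfying $\psi(0)=0$, $\psi'>0$, $\psi''<0$, and $v\mapsto\psi(v)/v$ strictly decreasing. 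Writing $v:=h(u)$ turns the problem into the semilinear Neumann problem $\Delta v+G(x,v)=0$, $\partial_{\nu}v=0$, with $G(x,v):=\lambda\psi(v)\bigl(m(x)-\psi(v)\bigr)$; once a positive $v$ is produced, $u_{\lambda}:=\psi(v)>0$ answers the existence claim. To get such a $v$ I would use upper and lower solutions. A constant $\overline v\equiv K$ with $\psi(K)\geqslant\max_{x\in\overline{\Omega}}m(x)$ is a supersolution, and it exists because $\max_{\overline{\Omega}}m>0$ in both cases and, for $D<0$, \textbf{(O)} is exactly $\max_{\overline{\Omega}}m<1/|D|=\sup\psi$ (so $K<1/(2|D|)$ is kept). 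A subsolution is $\underline v=\varepsilon\phi_{1}$ for small $\varepsilon>0$, where $\phi_{1}>0$ is the Neumann principal eigenfunction of $\Delta+\lambda m$ with principal eigenvalue $\mu_{1}(\lambda)$, since
\[
\Delta(\varepsilon\phi_{1})+G(x,\varepsilon\phi_{1})=\varepsilon\phi_{1}\bigl[\mu_{1}(\lambda)+O(\varepsilon)\bigr]>0
\]
for $\varepsilon$ small whenever $\mu_{1}(\lambda)>0$. The sign of $\mu_{1}(\lambda)$ is exactly what the hypotheses control: $\mu_{1}(\lambda)>0$ iff $\lambda>\lambda_{*}$ under \textbf{(A1)}, and $\mu_{1}(\lambda)>0$ for all $\lambda>0$ under \textbf{(A2)} — this follows from concavity of $\lambda\mapsto-\mu_{1}(\lambda)$, its vanishing at $\lambda=0$ and (for \textbf{(A1)}) at $\lambda_{*}$, and a test with the constant function using $\int_{\Omega}m\geqslant0$ and $m\not\equiv\mathrm{const}$ (for \textbf{(A2)}). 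Monotone iteration between $\varepsilon\phi_{1}\leqslant K$ then yields a solution $v$, elliptic regularity ($m\in C^{\alpha}$) gives $v\in C^{2+\alpha}(\overline{\Omega})$, and $u_{\lambda}=\psi(v)>0$ is the desired equilibrium (with $u_{\lambda}<1/|D|$ when $D<0$).

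For the inequality \eqref{inequality}, I would divide the steady-state equation by $u_{\lambda}>0$ and integrate: using the Neumann condition and integration by parts,
\[
\lambda\int_{\Omega}\bigl(u_{\lambda}-m\bigr)\,\mathrm{d}x=\int_{\Omega}\frac{\Delta(h(u_{\lambda}))}{u_{\lambda}}\,\mathrm{d}x=\int_{\Omega}\frac{h'(u_{\lambda})\,|\nabla u_{\lambda}|^{2}}{u_{\lambda}^{2}}\,\mathrm{d}x\geqslant0,
\]
because $h'(u_{\lambda})=1+Du_{\lambda}>0$ (automatic for $D\geqslant0$; for $D<0$ use $u_{\lambda}<1/|D|$). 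Since $m\not\equiv\mathrm{const}$, $u_{\lambda}$ cannot be a positive constant, so $\nabla u_{\lambda}\not\equiv0$ and the inequality is strict.

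For uniqueness when $D>0$ (so $h$ is a global increasing bijection, and any positive equilibrium gives a positive $v$), I would argue in the $v$-variable via a Picone / Brezis–Oswald-type identity: for two positive solutions $v_{1},v_{2}$ of $\Delta v+G(x,v)=0$, test each equation against $(v_{1}^{2}-v_{2}^{2})/v_{i}$ and subtract, so that Picone's inequality makes the gradient contribution nonnegative while the reaction contribution equals $\int_{\Omega}(v_{1}^{2}-v_{2}^{2})\bigl(G(x,v_{1})/v_{1}-G(x,v_{2})/v_{2}\bigr)\,\mathrm{d}x$. It then suffices that $v\mapsto G(x,v)/v$ be strictly decreasing on the a priori range of solutions, which I would extract from $G(x,0)=0$ together with the computation
\[
\partial_{v}^{2}G(x,v)=-\lambda\bigl(Dm(x)+2\bigr)(1+2Dv)^{-3/2}
\]
and the maximum-principle bound $\psi(v)=u\leqslant\max_{\overline{\Omega}}m$ on every positive equilibrium (sharpened if needed to $\varepsilon\phi_{1}\leqslant h(u)\leqslant h(\max m)$ via a Harnack inequality). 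Then the reaction integral is $\leqslant0$ with equality only for $v_{1}\equiv v_{2}$, and together with the nonnegative gradient term this forces $v_{1}\equiv v_{2}$, i.e.\ $u_{1}\equiv u_{2}$.

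I expect this last step to be the real obstacle. The memory-based flux makes the reduced nonlinearity $G(x,\cdot)$ genuinely nonlinear, and $\partial_{v}^{2}G$ changes sign where $m(x)$ is very negative, so the sublinearity needed for the Picone argument is not immediate from $D>0$ alone and must be obtained by carefully combining the concavity identity for $G$ with the a priori localization of the positive equilibria. By contrast, the upper–lower-solution construction and the integral identity behind \eqref{inequality} are routine once the change of variable $v=h(u)$ is in place; the only other point requiring care is importing the sign dichotomy for $\mu_{1}(\lambda)$ from the Neumann indefinite-weight eigenvalue theory consistently with the definition of $\lambda_{*}$.
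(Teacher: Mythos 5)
Your existence argument and your proof of \eqref{inequality} are sound and essentially reproduce the paper's route: the same subsolution $\varepsilon\varphi_1$ built from the principal eigenfunction of $\Delta+\lambda m$ with Neumann condition, the same constant supersolution at the level of $\max_{\overline\Omega}m$, and the same ``divide by $u_\lambda$ and integrate'' identity, whose gradient term is nonnegative precisely because ${\bf(O)}$ forces $1+Du_\lambda>0$ on the range $0<u_\lambda\leqslant\max_{\overline\Omega}m$. The substitution $v=u+\tfrac{D}{2}u^{2}$ is a pleasant simplification (it lets you run standard semilinear monotone iteration instead of invoking Pao's quasilinear theory), and positivity of $u_\lambda$ on $\overline\Omega$ comes for free from the subsolution, where the paper appeals to the Harnack inequality and the Hopf lemma.

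The uniqueness step, however, contains a genuine gap, and the repair you propose cannot close it. Your own formula $\partial_v^2G(x,v)=-\lambda\bigl(Dm(x)+2\bigr)(1+2Dv)^{-3/2}$ shows that the sign of $\partial_v^2G$ is governed by $Dm(x)+2$ alone and is independent of $v$. Hence at every point where $m(x)<-2/D$ the map $v\mapsto G(x,v)/v$ is strictly \emph{increasing} on all of $(0,\infty)$, and no a priori localization of the solution's range (nor any Harnack sharpening of the bounds $\varepsilon\varphi_1\leqslant v\leqslant h(\max m)$) can restore the sublinearity that the Picone/Brezis--Oswald identity needs: the obstruction is pointwise in $x$, not in $v$. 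Under ${\bf(A1)}$ the set $\{m<-2/D\}$ is nonempty as soon as $D>-2/\min_{\overline\Omega}m$, while the theorem asserts uniqueness for every $D>0$; your argument therefore only covers $0<D<-2/\min_{\overline\Omega}m$. The paper sidesteps sublinearity entirely: each positive steady state $u$ is a positive eigenfunction with eigenvalue $0$ of $\varphi\mapsto\nabla\cdot\bigl((1+Du)\nabla\varphi\bigr)+\lambda(m-u)\varphi$, so $0$ is the principal eigenvalue of that operator; for two distinct ordered solutions $u<\tilde u$ and $D>0$, the operator attached to $u$ has a strictly smaller diffusion coefficient and a strictly larger potential, hence a strictly larger principal eigenvalue, contradicting that both equal $0$. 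If you wish to keep your framework, replace the Picone step by this principal-eigenvalue comparison, applied for instance to the minimal and maximal solutions furnished by your monotone iteration.
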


We remark that the assumption $\bf{(O)}$ guarantees the global existence of steady state for $\lambda>\lambda_*$ or $\lambda>0$, which may not be necessary for the local existence of steady state. For studying local dynamics of $u_\lambda$ in the case of $\bf(A1)$, denote
$$
r_1=\int_{\Omega}\lambda_*\phi(x)^3\mathrm{d}x>0 \quad \text{and}\quad r_2=D\int_{\Omega}\phi(x)\nabla\cdot(\phi(x)\nabla\phi(x))\mathrm{d}x.
$$
We also make the following assumptions in this case
\begin{equation*}\label{conditionA1}
\begin{split}
\text{$\bf{(H1)}$} \quad r_1-r_2>0, \quad\quad
  &\text{$\bf{(H2)}$}\quad |D|<D_*:=\frac{1}{\max\limits_{\lambda\in(\lambda_*,\lambda^*],\overline{\Omega}}\{u_{\lambda}\}},  \\
\text{$\bf{(H3)}$} \quad r_1+r_2>0, \quad\quad
  &\text{$\bf{(H4)}$} \quad r_1+r_2<0,\\
\end{split}
\end{equation*}
where $u_{\lambda}$ is the steady state of \eqref{model} for $\lambda\in(\lambda_*,\lambda^*]$ with $0<{\lambda}^*-\lambda_*\ll1$.

\begin{theorem}\label{thconclusion}
Assume $\bf(A1)$ holds.
\begin{itemize}
\item[$(1)$] The following statements are valid.
           \begin{itemize}
            \item[$(i)$] If $\bf(H1)$, $\bf(H2)$ and $\bf(H3)$ satisfied, then there exists $\lambda_*<\tilde{\lambda}^*<\lambda^*$ such that for any $\lambda\in(\lambda_*,\tilde{\lambda}^*]$ and $\tau\in[0,\infty)$, $u_{\lambda}$ is locally asymptotically stable.
            \item[$(ii)$] If $\bf(H1)$, $\bf(H2)$ and $\bf(H4)$ satisfied, then for any $\lambda\in(\lambda_*,\tilde{\lambda}^*]$, there exist a sequence $\{\tau_n\}_{n=0}^{\infty}$ such that $u_{\lambda}$ is locally asymptotically stable for $\tau\in[0,\tau_0)$, unstable for $\tau\in(\tau_0,\infty)$, and \eqref{model} undergoes Hopf bifurcation at $\tau=\tau_0$.
           \end{itemize}
\item[$(2)$] If the inequality of $\bf(H1)$ is reversed, then for $\lambda\in[\underline{\lambda},\lambda_*)$
           with $0<\lambda_*-\underline{\lambda}\ll1$, $u_{\lambda}$ is unstable and the characteristic equation of $u_{\lambda}$ has pure imaginary roots, but for this case, the bifurcated periodic solutions are always unstable.
\end{itemize}
\label{conclusion}
\end{theorem}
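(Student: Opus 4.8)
The plan is to linearize \eqref{model} at the positive steady state $u_\lambda$ and to study the resulting eigenvalue problem, which is nonlinear in the spectral parameter $\mu$ because of the delay, by the implicit-function-theorem scheme of \cite{huang,anqi}, organized around the bifurcation value $\lambda=\lambda_*$ of Theorem~\ref{existence}, where $u_\lambda$ emanates from the trivial state. Substituting $u=u_\lambda+e^{\mu t}\psi(x)$ shows that $\mu\in\mathbb{C}$ is an eigenvalue if and only if there is $\psi\in X_{\mathbb{C}}\setminus\{0\}$ with $\partial_\nu\psi=0$ and
\begin{equation*}
\Delta\psi+D\nabla\cdot(\psi\nabla u_\lambda)+De^{-\mu\tau}\nabla\cdot(u_\lambda\nabla\psi)+\lambda(m-2u_\lambda)\psi=\mu\psi .
\end{equation*}
At $\lambda=\lambda_*$ one has $u_{\lambda_*}\equiv0$, so this collapses to $(\Delta+\lambda_* m)\psi=\mu\psi$, whose principal eigenvalue is $0$, simple, with eigenfunction $\phi>0$, and the rest of whose spectrum lies in $\{\operatorname{Re}\mu<0\}$. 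A two-term Lyapunov-Schmidt expansion of the steady state itself gives $u_\lambda=\xi(\lambda-\lambda_*)\phi+o(\lambda-\lambda_*)$ with $\xi=(\int_\Omega m\phi^2)/(r_1-r_2)$; since $\int_\Omega m\phi^2=\lambda_*^{-1}\int_\Omega|\nabla\phi|^2>0$, the direction of bifurcation has the sign of $r_1-r_2$. This is exactly what separates part $(1)$, where \textbf{(H1)} puts $u_\lambda$ on $\{\lambda>\lambda_*\}$, from part $(2)$, where reversing \textbf{(H1)} puts it on $\{\lambda<\lambda_*\}$, the local branch $\lambda\in[\underline\lambda,\lambda_*)$ being produced by this same expansion.

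The technical core, and the step I expect to be the main obstacle, is a uniform-in-$\tau$ a priori estimate: there is $\tilde\lambda^*\in(\lambda_*,\lambda^*)$ such that for $\lambda\in(\lambda_*,\tilde\lambda^*]$ every eigenvalue $\mu$ with $\operatorname{Re}\mu\ge0$ satisfies $|\mu|=O(\lambda-\lambda_*)$ and has an eigenfunction that is $O(\lambda-\lambda_*)$-close in $X_{\mathbb{C}}$ to $\operatorname{span}\{\phi\}$. The favorable point is that on $\{\operatorname{Re}\mu\ge0\}$ one has $|e^{-\mu\tau}|\le1$, so the whole $u_\lambda$-dependent part of the operator has norm $O(\lambda-\lambda_*)$ uniformly in $\tau$; the difficulty, and the reason the argument must go beyond \cite{huang} and follow the heavier estimates of \cite{anqi}, is that this operator is not self-adjoint and that $e^{-\mu\tau}$ is present. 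Granting the estimate, write $\psi=\phi+w$ with $\langle\phi,w\rangle=0$, solve the range equation for $w=w(\mu,\lambda,\tau)=O(|\mu|+|\lambda-\lambda_*|)$ by the implicit function theorem (its linearization is $(\Delta+\lambda_* m)$ restricted to $\{\langle\phi,\cdot\rangle=0\}$, which is invertible), and substitute $w$ into the one-dimensional bifurcation equation $\langle\phi,(\,\cdot\,)(\phi+w)\rangle=0$. Expanding in powers of $\lambda-\lambda_*$, it reduces, up to a remainder $o(\lambda-\lambda_*)$, to
\begin{equation*}
\lambda_1\mu+(\lambda-\lambda_*)\bigl(r_1-r_2\,e^{-\mu\tau}\bigr)=0,\qquad \lambda_1:=\frac{r_1-r_2}{\int_\Omega m\phi^2},
\end{equation*}
and a perturbation analysis of this leading part then locates all eigenvalues of the full problem near the imaginary axis.

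It remains to read the dynamics off this transcendental equation. Its relevant root at $\tau=0$ is $\mu=-(\lambda-\lambda_*)\int_\Omega m\phi^2$, and $\mu=iv$ forces $\cos(v\tau)=r_1/r_2$ together with $\lambda_1 v+(\lambda-\lambda_*)r_2\sin(v\tau)=0$. Under \textbf{(H1)} and \textbf{(H3)} one has $|r_2|<r_1$, so $\cos(v\tau)=r_1/r_2$ has no solution and $\mu=0$ is not a root for $\lambda\ne\lambda_*$; the root sitting in $\{\operatorname{Re}\mu<0\}$ at $\tau=0$ and being unable to cross the imaginary axis, no eigenvalue with $\operatorname{Re}\mu\ge0$ exists for any $\tau$, and $u_\lambda$ is locally asymptotically stable for all $\tau\ge0$, which is $(1)(i)$. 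Under \textbf{(H1)} and \textbf{(H4)} one has $r_2<-r_1<0$ and $r_1/r_2\in(-1,0)$; solving the two real relations yields a unique frequency $v=v(\lambda)=(\lambda-\lambda_*)\sqrt{r_2^2-r_1^2}/\lambda_1>0$ and an increasing sequence $\tau_n=\tau_n(\lambda)=(\arccos(r_1/r_2)+2n\pi)/v$, with $\tau_0\to\infty$ as $\lambda\to\lambda_*^+$, at each of which a conjugate pair crosses the axis; implicit differentiation shows that $\tfrac{d}{d\tau}\operatorname{Re}\mu$ at $\tau=\tau_n$ equals $\lambda_1^2 v^2/\bigl((\lambda_1+\tau_n A)^2+(\tau_n B)^2\bigr)>0$ with $A=(\lambda-\lambda_*)r_1$ and $B=\lambda_1 v$, so every crossing is transversal and outward. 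Hence $u_\lambda$ is locally asymptotically stable for $\tau\in[0,\tau_0)$, unstable for $\tau>\tau_0$, and a Hopf bifurcation theorem for partial functional differential equations (cf.\ \cite{anqi,Cdelay}) applies at $\tau_0$, the crossing eigenvalue being simple and transversal, which yields the branch of spatially nonhomogeneous periodic solutions, i.e.\ $(1)(ii)$. Here \textbf{(H2)} is used only through Theorem~\ref{existence}, to secure uniqueness and a uniform bound of $u_\lambda$ on $(\lambda_*,\lambda^*]$.

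For part $(2)$, the reversed \textbf{(H1)} gives $r_2>r_1>0$, hence $\lambda_1<0$, $\xi<0$, and $u_\lambda$ lives on $\lambda\in[\underline\lambda,\lambda_*)$; at $\tau=0$ the reduced root is $\mu=-(\lambda-\lambda_*)\int_\Omega m\phi^2>0$ because $\lambda-\lambda_*<0$, so $u_\lambda$ is unstable, and since the transversality quantity above is always positive this root can never return to $\{\operatorname{Re}\mu\le0\}$, so $u_\lambda$ remains unstable for every $\tau\ge0$. Nevertheless $r_1/r_2\in(0,1)$, so $\cos(v\tau)=r_1/r_2$ is solvable and purely imaginary roots, hence Hopf values $\tau_n(\lambda)$, still occur. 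Because $u_\lambda$ always retains an eigenvalue with positive real part disjoint from the purely imaginary crossing pair, the periodic orbit bifurcating at $\tau_n$, which lies on the two-dimensional center manifold of that pair, is unstable regardless of the criticality of the bifurcation, which is precisely assertion $(2)$.
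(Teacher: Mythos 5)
Your proposal follows essentially the same route as the paper: the Busenberg--Huang/An--Wang--Wang implicit-function scheme organized around $\lambda=\lambda_*$, reduction of the characteristic problem to the scalar transcendental equation $\lambda_1\mu+(\lambda-\lambda_*)(r_1-r_2e^{-\mu\tau})=0$ (which, with the normalization $\|\phi\|_Y=1$, is exactly the limit equation the paper derives in Propositions 4.4--4.5 and Lemma 4.6), the same crossing conditions $\cos(v\tau)=r_1/r_2$, the same frequency $h_{\lambda_*}=\sqrt{(r_1+r_2)/(r_2-r_1)}\,\int_\Omega m\phi^2$, and a transversality quantity that agrees with the paper's. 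Your formula $\mathrm{Re}\,\mu'(\tau_n)=\lambda_1^2v^2/\bigl((\lambda_1+\tau_nA)^2+(\tau_nB)^2\bigr)$ is a correct computation on the reduced equation; the paper performs the corresponding computation on the exact characteristic functional $Q$ and passes to the limit $\lambda\to\lambda_*$, which is what justifies transferring the sign to the true eigenvalue branch.

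Two points need attention. First, the step you explicitly defer (``granting the estimate'') is in fact the bulk of the paper's proof: Lemmas 4.2--4.3 (uniform bounds on $\|\nabla\psi_\lambda\|$ and on $|\mu_\lambda|/(\lambda-\lambda_*)$ for eigenvalues with $\mathrm{Re}\,\mu\ge0$) and the sequential compactness arguments in Propositions 4.4, 4.5 and 4.7 that show \emph{every} unstable eigenvalue is captured by the reduced equation. Your stated mechanism ($|e^{-\mu\tau}|\le1$ plus smallness of $u_\lambda$) is the right one, so this is a deferral rather than a wrong turn. Second, and more concretely, you misplace the role of $\bf(H2)$: it is not used ``only through Theorem~\ref{existence} to secure uniqueness and a uniform bound of $u_\lambda$'' (uniqueness there needs $D>0$, and the uniform bound comes from the implicit-function construction of Proposition 4.1 under $\bf(H1)$). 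Rather, $\bf(H2)$, i.e.\ $|D|\max u_\lambda<1$, is precisely what makes your deferred a priori estimate work: in Lemma 4.2 it yields $1-|D|\max u_\lambda>0$, allowing the term $|D|\max u_\lambda\|\nabla\psi_\lambda\|^2$ to be absorbed and giving the gradient bound on eigenfunctions (equivalently, uniform ellipticity of $1+Du_\lambda$). If you carried out the estimate as described without invoking $\bf(H2)$ there, the argument would fail. With that correction, your outline, including the argument for part $(2)$ (which the paper omits as analogous), is sound.
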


\begin{figure}
\centering
\begin{tabular}{ccc}
 \includegraphics[height=6.5cm,width=12cm]{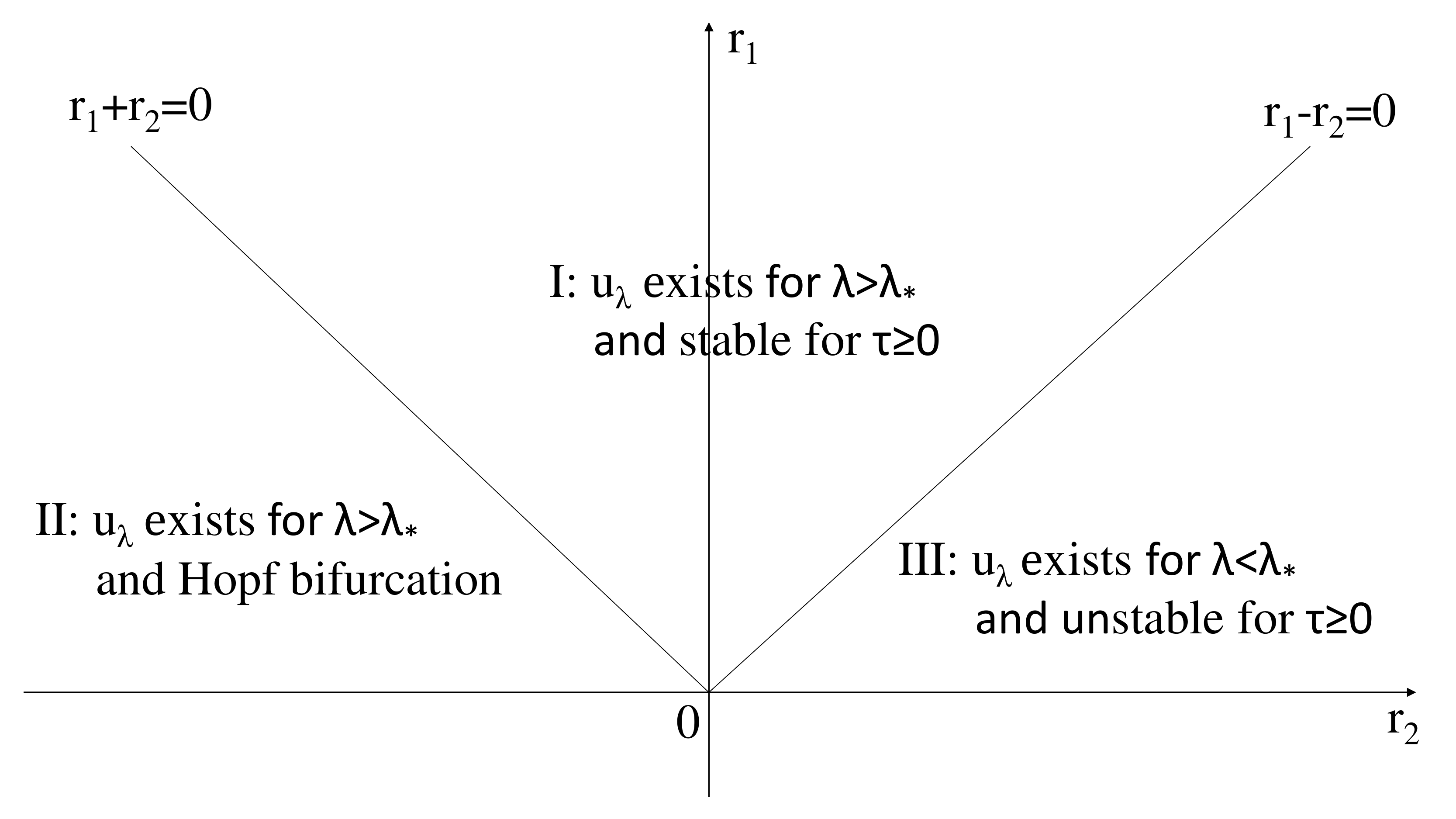}
\\
\end{tabular}
\caption{The local dynamics of \eqref{model} near $u_{\lambda}$. In region I, $u_{\lambda}$ is stable for $\tau\geqslant0$. In region II, the increment of delay can induce Hopf bifurcation at $u_{\lambda}$. In region III, $u_{\lambda}$ is always unstable for $\tau\geqslant0$.}
\label{picture}
\end{figure}

The results of Theorem \ref{thconclusion} are visualized in Figure \ref{picture}, where local dynamics of $u_\lambda$ are characterized in $(r_1,r_2)$ plane. For numerical test of Theorem \ref{thconclusion},
set $m(x)=-x^3+5$ for $x\in\overline{\Omega}=[0,\pi]$. Then, $\int_\Omega mdx=\frac{20\pi-\pi^4}{4}<0$, satisfying $\bf(A1)$. Moreover, $\lambda_*\approx0.0560, r_1\approx0.0755, \frac{r_2}{D}\approx-0.1100$. We can obtain the critical value $\overline{D}=0.6864$.
If we choose $\lambda=0.6>\lambda_*$ and $D=0.3$, then $r_1-r_2\approx0.1085>0$ and $r_1+r_2\approx0.0425>0$, which satisfy $\bf{(H1)}$ and $\bf{(H3)}$. From Theorem \ref{thconclusion} $(1)-(i)$, the steady state $u_{\lambda}$ is locally asymptotically stable for any $\tau\geq0$, see Figure \ref{NMle}-$(a)$. As $D$ is increased to $0.8$, we get $r_1-r_2\approx0.1635>0$ and $r_1+r_2\approx-0.0125<0$. It then follows from Theorem \ref{thconclusion} $(1)-(ii)$ that \eqref{model} will undergo Hopf bifurcation as $\tau$ raises, see Figure \ref{NMle}-$(b)$ and $(c)$.

\begin{figure}[htbp]
    \centering
    \subfigure[$D=0.3, \tau=50$]{
        \includegraphics[width=0.31\textwidth]{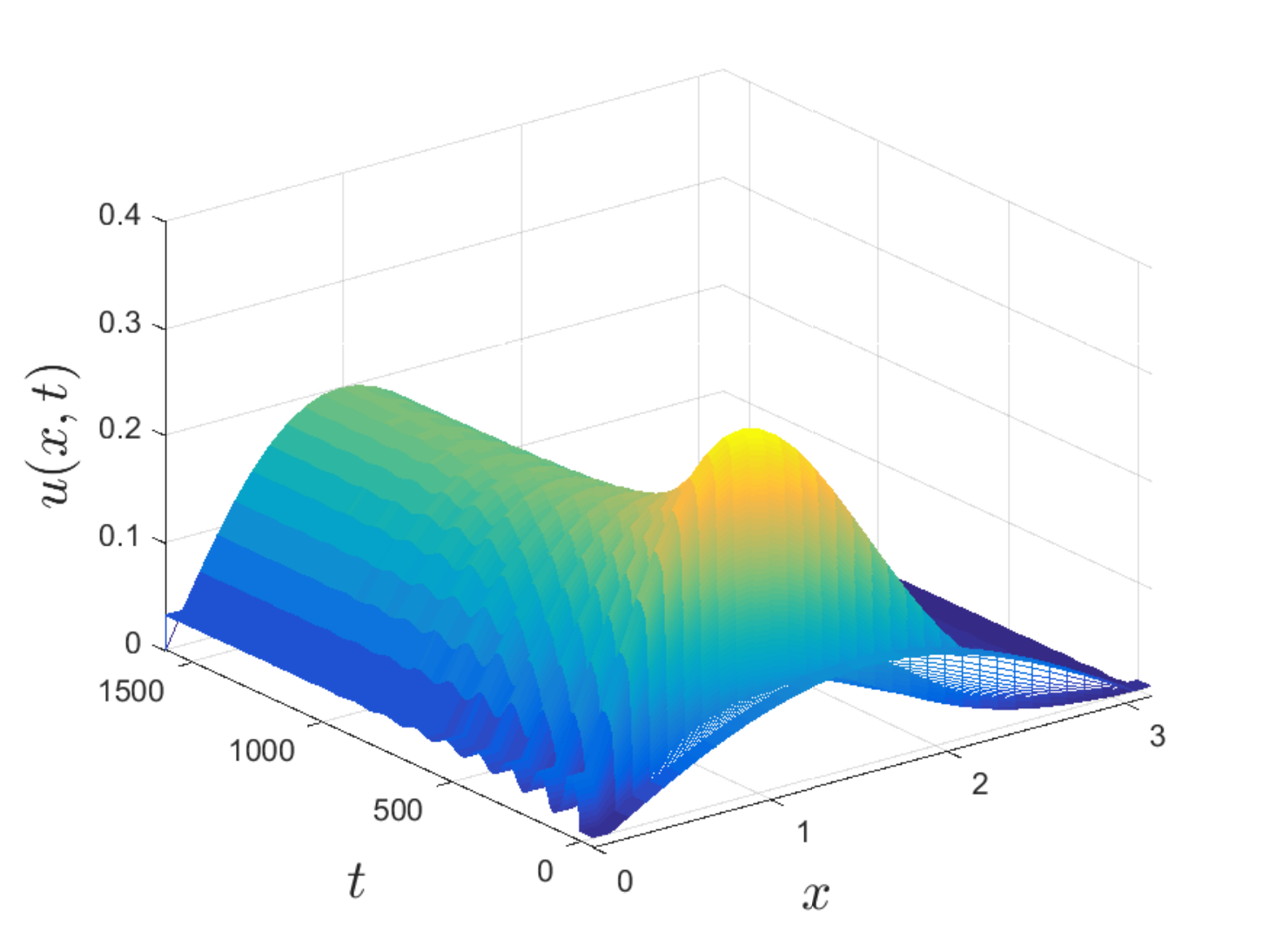}
    }
    \subfigure[$D=0.8, \tau=10$]{
        \includegraphics[width=0.31\textwidth]{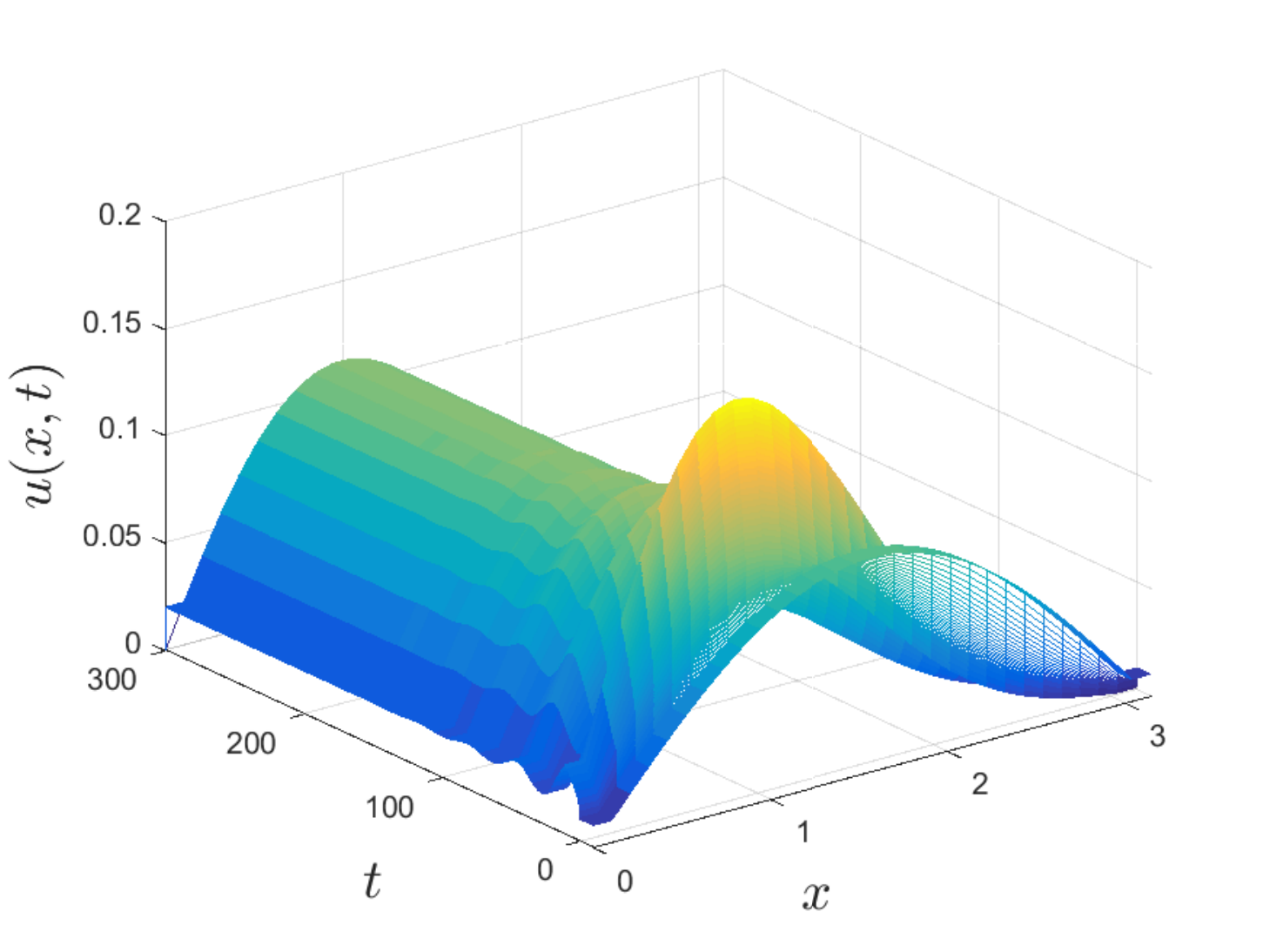}
    }
    \subfigure[$D=0.8, \tau=50$]{
        \includegraphics[width=0.31\textwidth]{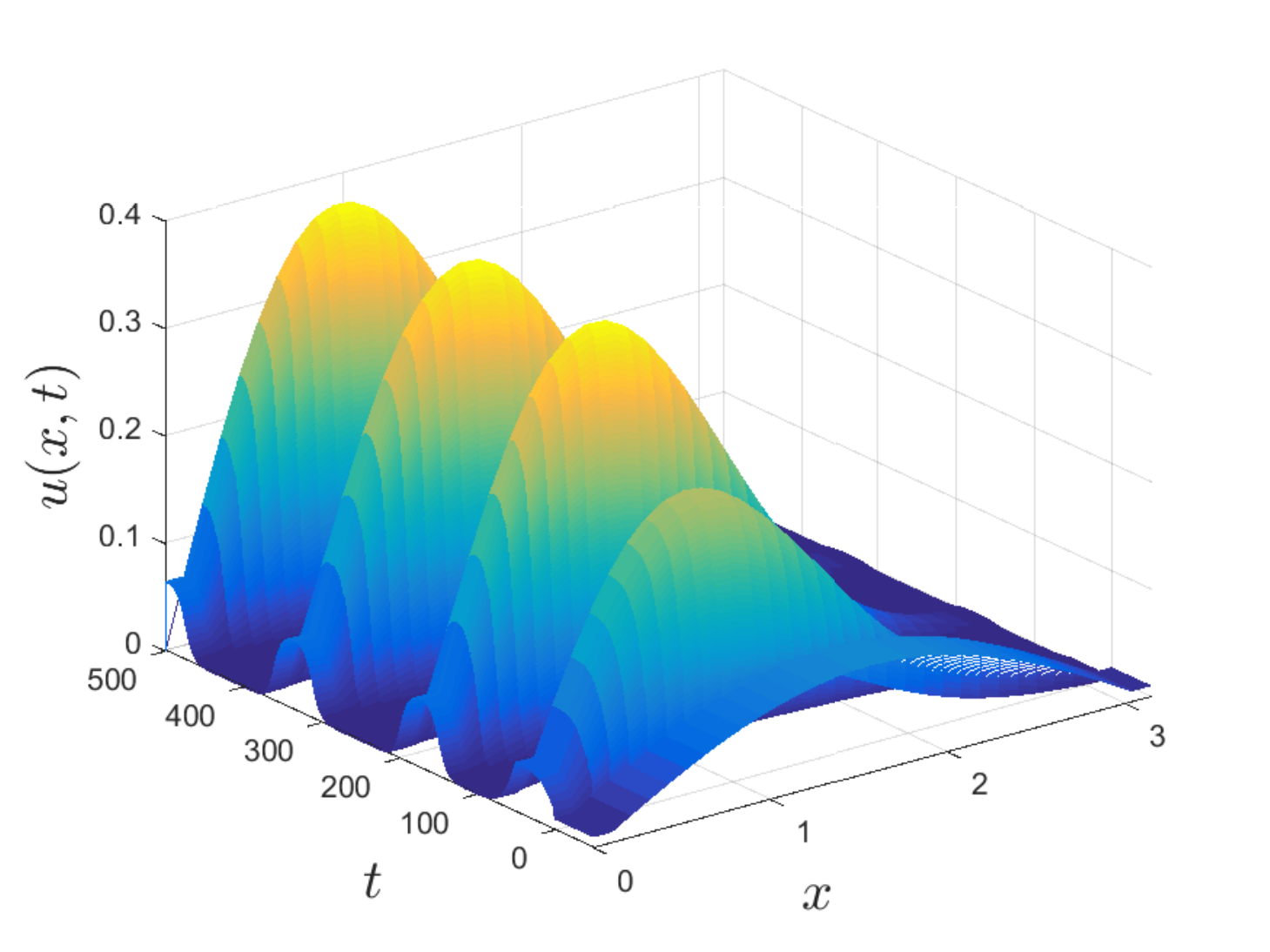}
    }
    \caption{Solutions of model (\ref{model}) for different choices of $(D,\tau)$. Here, $\Omega=[0,\pi]$, $m(x)=-x^3+5$, $\lambda=0.6$.}
\label{NMle}
\end{figure}

Now, for case $\bf(A2)$, we make the following assumptions:
\begin{equation*}\label{conditionA2}
\begin{split}
&\text{$\bf(P)$}\quad |D|<D^*:=\frac{1}{\max\limits_{\lambda\in[0,\lambda^*],\overline{\Omega}}\{u_{\lambda}\}}.
\end{split}
\end{equation*}

\begin{theorem}\label{conclusion2}
Under assumption $\bf(A2)$, if $\bf(P)$ satisfied, then for any $\lambda\in(0,\tilde{\lambda}^*]$ with $0<\tilde{\lambda}^*\ll1$, the positive steady state $u_{\lambda}$ of system \eqref{model} is locally asymptotically stable with $\tau\in[0,\infty)$.
\end{theorem}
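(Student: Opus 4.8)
The plan is to apply the implicit‑function / a‑priori‑estimate scheme of \cite{huang}, extended to memory‑based diffusion in \cite{anqi}, to the characteristic equation at $u_\lambda$, and to use the total‑mass inequality \eqref{inequality} to exclude roots in the closed right half‑plane for every $\tau\in[0,\infty)$ once $\lambda$ is small. Because $u_\lambda$ is time–independent, the delayed term in \eqref{model} linearizes at $u_\lambda$ to $D\nabla\cdot(u_\lambda\nabla\psi_\tau+\psi\nabla u_\lambda)$, so substituting $u=u_\lambda+e^{\mu t}\psi$ with $\psi\in X_{\mathbb{C}}$, $\|\psi\|_Y=1$, gives the characteristic problem
\[
\Delta\psi+De^{-\mu\tau}\nabla\cdot(u_\lambda\nabla\psi)+D\nabla\cdot(\psi\nabla u_\lambda)+\lambda(m-2u_\lambda)\psi=\mu\psi,\qquad\partial_\nu\psi|_{\partial\Omega}=0 .
\]
We must produce $\tilde\lambda^*\in(0,\lambda^*]$ such that for $\lambda\in(0,\tilde\lambda^*]$ this problem has no root with $\mathrm{Re}\,\mu\ge0$; the linearized‑stability principle for \eqref{model} then gives local asymptotic stability of $u_\lambda$, uniformly in $\tau$, once we also observe (from Step~1 below) that the roots with largest real part stay uniformly off the imaginary axis.

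\emph{Step 1 (a priori estimate — the crux).} Pair the characteristic equation with $\psi$ in $Y_{\mathbb{C}}$ and take real parts; using $\langle\psi,\Delta\psi\rangle=-\|\nabla\psi\|^2$, $\langle\psi,\nabla\cdot(u_\lambda\nabla\psi)\rangle=-\int_\Omega u_\lambda|\nabla\psi|^2\le0$, $|\langle\psi,\nabla\cdot(\psi\nabla u_\lambda)\rangle|\le\|\nabla u_\lambda\|_\infty\|\nabla\psi\|$, and $|e^{-\mu\tau}|\le1$ when $\mathrm{Re}\,\mu\ge0$, one obtains
\[
\mathrm{Re}\,\mu\le-\bigl(1-|D|\,\|u_\lambda\|_\infty\bigr)\|\nabla\psi\|^2+|D|\,\|\nabla u_\lambda\|_\infty\|\nabla\psi\|+\lambda\int_\Omega(m-2u_\lambda)|\psi|^2 .
\]
By Theorem~\ref{existence} and elliptic regularity, $u_\lambda\to\overline m$ in $C^1(\overline\Omega)$ as $\lambda\to0^+$ with $\|\nabla u_\lambda\|_\infty=o(1)$, and assumption \textbf{(P)} makes $\delta:=1-|D|\max_{\lambda\in[0,\lambda^*]}\|u_\lambda\|_\infty>0$. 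Hence a root with $\mathrm{Re}\,\mu\ge0$ forces $\delta\|\nabla\psi\|^2\le C\lambda\|\nabla\psi\|+C\lambda$, so $\|\nabla\psi\|=O(\sqrt\lambda)$, and then $|\mu|=O(\lambda)$. When $\int_\Omega m=0$ this is sharpened to $\|\nabla\psi\|=O(\lambda)$ by writing $\int_\Omega m|\psi|^2=\int_\Omega m(|\psi|^2-|\Omega|^{-1})$ and bounding $\||\psi|^2-|\Omega|^{-1}\|_{L^1}\le C\|\nabla\psi\|$. This uniform‑in‑$\tau$ estimate — different from its analogue in case \textbf{(A1)} because here the limiting eigenfunction is a constant — is the technical heart.

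\emph{Step 2 (projection onto constants).} Integrating the characteristic equation over $\Omega$ kills the three divergence terms and leaves $\mu\int_\Omega\psi=\lambda\int_\Omega(m-2u_\lambda)\psi$. Writing $\psi=\bar\psi_\Omega+\rho$ with $\bar\psi_\Omega=|\Omega|^{-1}\int_\Omega\psi$ (so $|\bar\psi_\Omega|\ge\tfrac12|\Omega|^{-1/2}$ for small $\lambda$ by Step~1) and $\|\rho\|_Y=O(\sqrt\lambda)$, this yields
\[
\mu=\lambda\Bigl(\overline m-\tfrac2{|\Omega|}\int_\Omega u_\lambda\,\mathrm dx\Bigr)+O(\lambda^{3/2}).
\]
The bracket is real, and by \eqref{inequality} together with $\overline m\ge0$ it is $<-\overline m\le0$; when $\overline m>0$ it is bounded away from $0$, so $\mathrm{Re}\,\mu<0$ for $\lambda$ small — contradicting $\mathrm{Re}\,\mu\ge0$. (Equivalently: along any sequence $\lambda_n\to0$, $\tau_n\ge0$ with $\mathrm{Re}\,\mu_n\ge0$, Step~1 gives $\psi_n\to|\Omega|^{-1/2}$ in $Y$ and $u_{\lambda_n}\to\overline m$, hence $\mu_n/\lambda_n\to-\overline m<0$, which is impossible.)

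\emph{Step 3 (degenerate case $\int_\Omega m=0$, and conclusion).} If $\overline m=0$ then $u_\lambda\to0$, the bracket in Step~2 is itself $O(\lambda)$, and the dominant balance sits at order $\lambda^2$. Using the expansion $u_\lambda=\lambda v_0+O(\lambda^2)$ with $v_0=|\Omega|^{-1}\int_\Omega|\nabla w|^2$, where $\Delta w=m$, $\partial_\nu w|_{\partial\Omega}=0$ (solvable since $\int_\Omega m=0$, and $v_0>0$ because $m\not\equiv$ constant), together with the improved bound $\|\rho\|_Y=O(\lambda)$ and the leading corrector $\rho\approx-\lambda\bar\psi_\Omega w$ read off from the characteristic equation, one finds $\mathrm{Re}\,\mu=-v_0\lambda^2+o(\lambda^2)<0$, again contradicting $\mathrm{Re}\,\mu\ge0$. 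This refined a priori estimate — where the $O(\lambda)$ fluctuation of $\psi$ and the sign of $\int_\Omega m(\psi-\bar\psi_\Omega)$ both enter the second‑order balance — is the main obstacle. Combining Steps 1–3 gives $\tilde\lambda^*>0$ so that for all $\lambda\in(0,\tilde\lambda^*]$ and all $\tau\ge0$ every characteristic root has negative real part and, being $O(\lambda)$ in modulus whenever its real part is near $0$, stays uniformly bounded away from the imaginary axis; hence $u_\lambda$ is locally asymptotically stable, and, in contrast with case \textbf{(A1)}, no purely imaginary root ever appears, so increasing $\tau$ cannot trigger a Hopf bifurcation.
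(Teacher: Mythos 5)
Your argument reaches the correct conclusion and, at the level of the key identity ($\mu/\lambda\to\overline m-2\overline m=-\overline m<0$), lands on the same mechanism as the paper, but the route is genuinely different and in some respects cleaner. The paper proceeds through Proposition 5.1 (an explicit two-term expansion of $u_\lambda$ via upper/lower solutions), Lemma 5.2 ($\|\nabla\psi_\lambda\|\leqslant C$), Lemma 5.3 (boundedness of $|\mu_\lambda/\lambda|$ via a trigonometric estimate on $e^{-\mu_\lambda\tau_\lambda}$ that implicitly assumes $\tau_\lambda<\overline\tau$), and then Proposition 5.4, which runs the compactness/subsequence contradiction scheme of Propositions 4.3--4.4 with the decomposition $X=\mathrm{span}\{1\}\oplus X_1$. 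You instead extract the quantitative bounds $\|\nabla\psi\|=O(\sqrt\lambda)$ and $|\mu|=O(\lambda)$ directly from the energy pairing together with assumption $\mathbf{(P)}$ and $\|\nabla u_\lambda\|_\infty=O(\lambda)$, and then integrate the characteristic equation over $\Omega$, which annihilates all three divergence terms (including the delayed one) and yields $\mu=\lambda(\overline m-\tfrac{2}{|\Omega|}\int_\Omega u_\lambda)+O(\lambda^{3/2})$, negative by \eqref{inequality}. This buys you three things: no implicit function theorem or precompactness argument is needed; the estimate is manifestly uniform in $\tau\in[0,\infty)$, avoiding the $\tau_\lambda<\overline\tau$ restriction lurking in the paper's Lemma 5.3; and you explicitly treat the degenerate case $\overline m=0$ (allowed by $\mathbf{(A2)}$ but not covered by the paper's Proposition 5.1, whose constants $C(m)$ and $K(m)$ divide by $\overline m$). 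Two caveats: your Step 3 is only a sketch --- the expansion $u_\lambda=\lambda v_0+O(\lambda^2)$, the improved bound $\|\rho\|_Y=O(\lambda)$, and the identification of the leading corrector $\rho$ all need to be justified with the same care as Steps 1--2 before the second-order balance $\mathrm{Re}\,\mu=-v_0\lambda^2+o(\lambda^2)$ can be asserted; and, as in the paper, the passage from ``no characteristic root with $\mathrm{Re}\,\mu\geqslant0$'' to local asymptotic stability of the delayed PDE should at least cite the linearized stability principle for the associated solution semigroup.
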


We choose $m(x)=\sin x+1\geq0$ on $\overline{\Omega}=[0,\pi]$, and $m(x)=5\cos x+0.3$ which changes sign on $\Omega$ in numerical simulations. In either case, it is observed in Figure \ref{nuemann-min} that $u_\lambda(x)$ is locally asymptotically stable.

\begin{figure}[htbp]
	\centering
	\begin{minipage}{0.45\linewidth}
		\centering\subfigure[$m(x)=\sin x+1$]
{\includegraphics[scale=0.5]{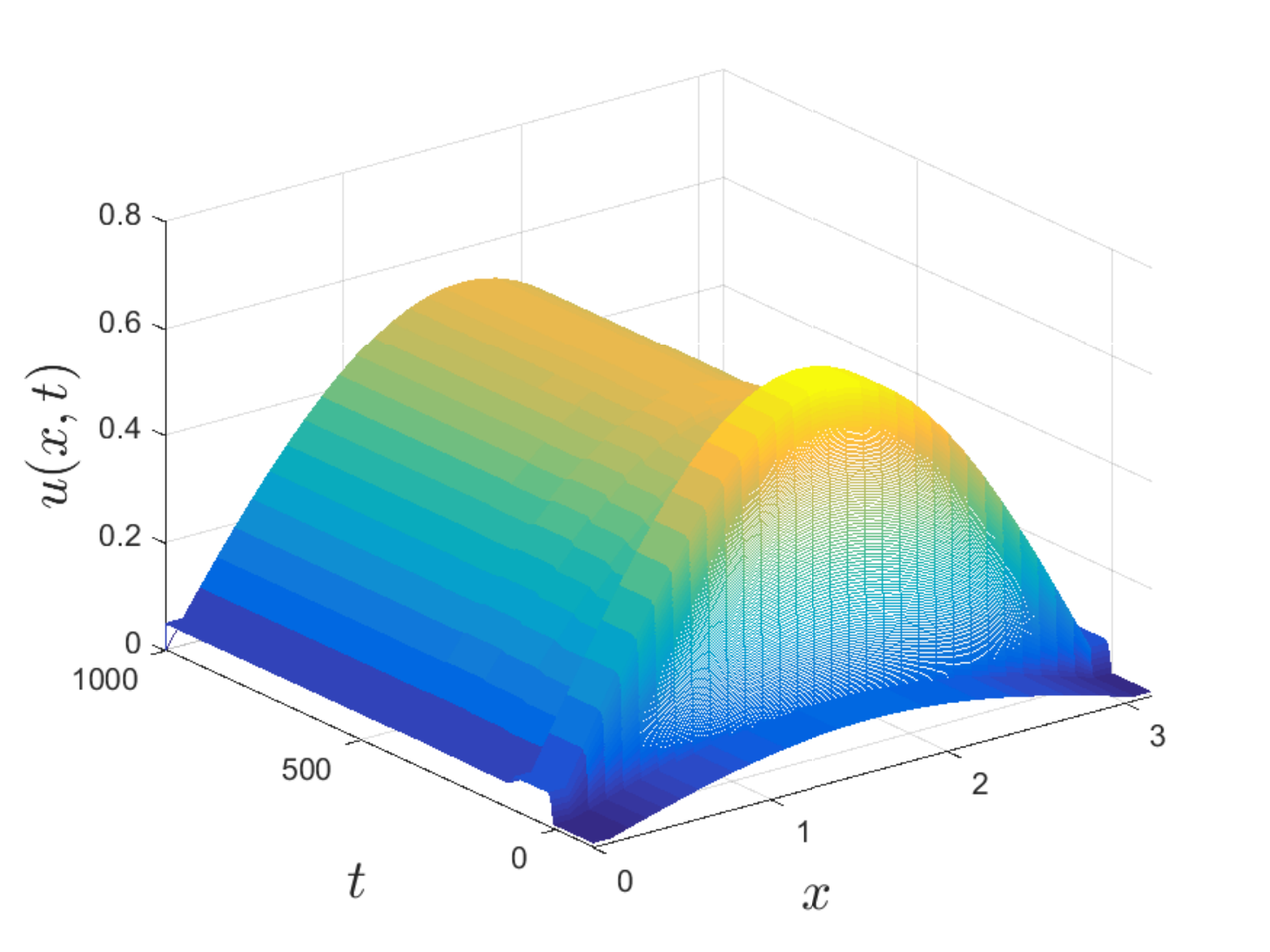}}
			\label{figA}
	\end{minipage}
	\begin{minipage}{0.45\linewidth}
		\centering\subfigure[$m(x)=5\cos x+0.2$]
{\includegraphics[scale=0.5]{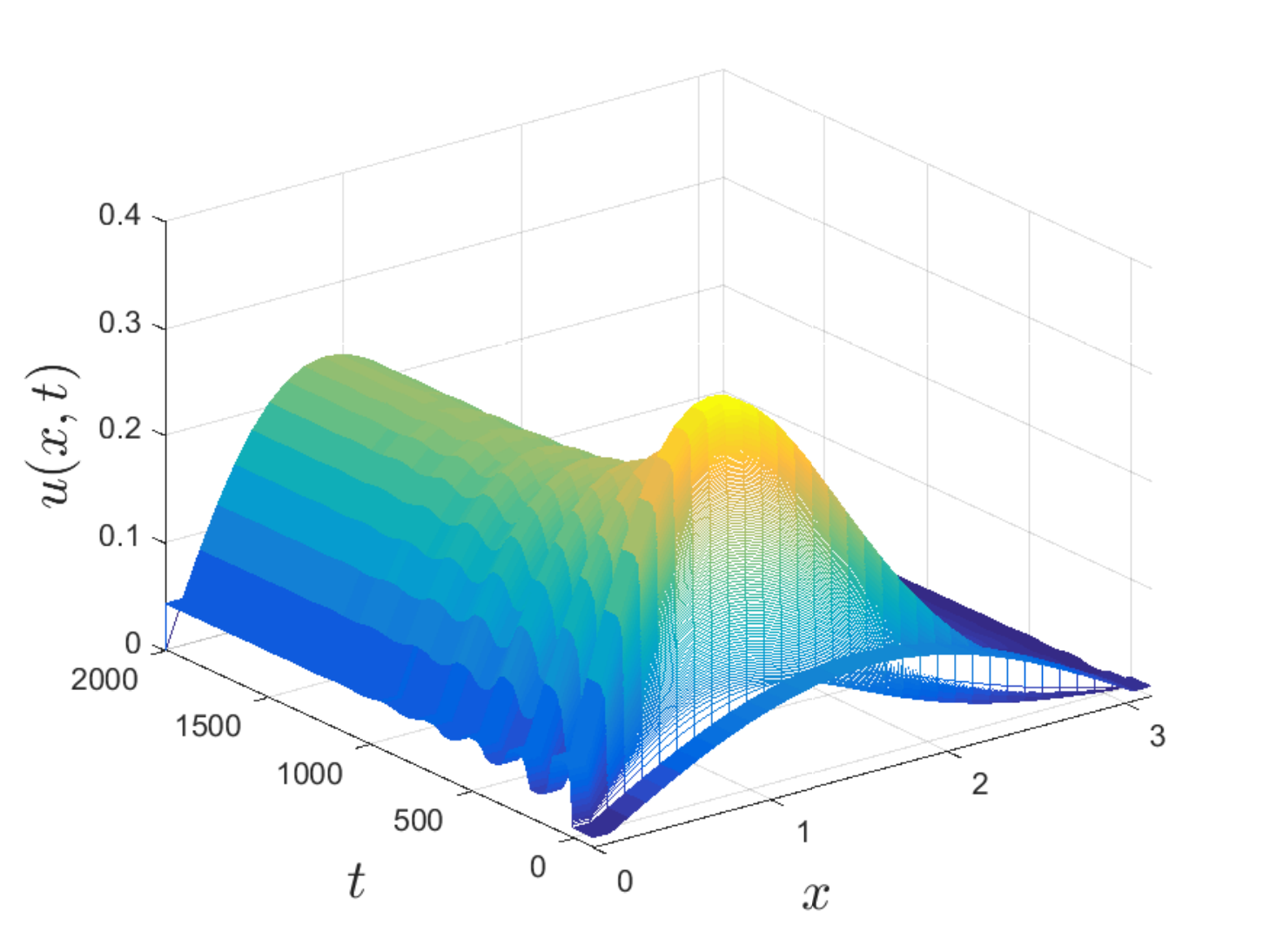}
			\label{figB}}
	\end{minipage}
\caption{The solution of \eqref{model} with different $m(x)$ satisfying ${\bf(A2)}$. Here, $\lambda=0.8$, $D=0.3$ and $\tau=100$.}
\label{nuemann-min}
\end{figure}

\section{The proof of Theorem \ref{existence}}\label{exist}

The existence of steady state in Theorem \ref{existence} will be shown by the method of upper and lower solution.
Let $\sigma_1$ is a principal eigenvalue (with positive eigenfunction $\varphi_1$) of
\begin{equation}
\begin{cases}
\Delta \varphi+\lambda m(x)\varphi=\sigma\varphi,\quad & x\in\Omega,\\
\partial_{\nu}\varphi=0,\quad & x\in\partial{\Omega}.
\end{cases}
\label{sigma}
\end{equation}
Then, under assumption $\bf(A1)$ ($\bf(A2)$ resp.), we have $\sigma_1>0$, when $\lambda>\lambda_*$ ($\lambda>0$, resp.).
Let $\underline{u}=\varepsilon\varphi_1$. It follows from that
\begin{equation}
\begin{split}
&\Delta(\varepsilon\varphi_1)+D(\varepsilon\varphi_1)\Delta(\varepsilon\varphi_1)+D\nabla(\varepsilon\varphi_1)\cdot\nabla(\varepsilon\varphi_1)+\lambda(\varepsilon\varphi_1)(m(x)-\varepsilon\varphi_1)\\
=&\varepsilon\sigma_1\varphi_1+\varepsilon^2\left(D\Delta\varphi_1-\lambda\varphi_1^2+D(\nabla\varphi_1)^2\right)\\
\end{split}
\end{equation}
which is positive, for sufficiently small $\varepsilon>0$.
This means $\underline{u}$ is a subsolution for \eqref{model}. On the other hand, it can be verified that $\overline{u}=\max\limits_{x\in\overline{\Omega}}\{m(x)\}:=K$ is a supersolution of \eqref{model}, and $\underline{u}\leqslant\overline{u}$ for sufficiently small $\varepsilon>0$.
By $\bf(O)$, we know $\bf(H3)$ in \cite{Pao} is satisfied. It then follows from Theorem 3.1 in \cite{Pao} that \eqref{model} admints a solution $u_{\lambda}$ such that $\underline{u}\leqslant u_{\lambda}\leqslant\overline{u}$.

Now, we are about to prove the uniqueness of $u_\lambda$ for $D>0$. Assume that $u_{\lambda}, \tilde{u}_{\lambda}$ are positive steady states of \eqref{model} such that $u_{\lambda}\not\equiv \tilde{u}_{\lambda}$ and $u_{\lambda}<\tilde{u}_{\lambda}$ somewhere on $\Omega$. Since $u_{\lambda}$ is a positive solution to
\begin{equation}\label{u*}
\begin{cases}
\nabla\cdot\left((1+Du_{\lambda})\nabla\varphi\right)+
\lambda\left(m(x)-u_{\lambda}\right)\varphi=\sigma\varphi,\quad & x\in\Omega,\\
\partial_{\nu}\varphi=0,\quad & x\in\partial{\Omega},
\end{cases}
\end{equation}
for $\sigma=0$, we know that $\sigma_1=0$ is the principal eigenvalue for \eqref{u*}. Similarly, $\tilde{u}_{\lambda}>0$ solves
 \begin{equation}\label{u**}
\begin{cases}
\nabla\cdot\left((1+D\tilde{u}_{\lambda})\nabla\tilde\varphi\right)+\lambda\left(m(x)-\tilde{u}_{\lambda}\right)\tilde\varphi=\tilde\sigma\tilde\varphi,\quad & x\in\Omega,\\
\partial_{\nu}\tilde\varphi=0,\quad & x\in\partial{\Omega},
\end{cases}
\end{equation}
with $\tilde\sigma=0$. So $\tilde\sigma_1=0$ is the principle eigenvalue for \eqref{u**}. Obviously, $m(x)-u_{\lambda}>m(x)-\tilde{u}_{\lambda}$. If $D>0$, then $1+Du_{\lambda}<1+D\tilde{u}_{\lambda}$. From Corollary 2.2 in \cite{CCBOOK}, the principal eigenvalue of \eqref{u*} must be greater than the one of \eqref{u**}, which leads to a contradiction.

In order to prove \eqref{inequality} in the case of $\bf(A2)$ (note that \eqref{inequality} is always true under $\bf(A1)$), we first show that $u_\lambda>0$ on $\bar{\Omega}$. Since $u_\lambda$ is bounded and solves
\begin{equation}
\begin{cases}
\Delta u_{\lambda}+D\nabla\cdot(u_{\lambda}\nabla u_{\lambda})+\lambda u_{\lambda}(m(x)-u_{\lambda})=0,\quad &x\in\Omega,\\
\partial_{\nu}u=0,            &x\in\partial\Omega,
\end{cases}
\label{inte}
\end{equation}
we know that
$u_{\lambda}\in C^{2+\gamma}(\Omega)$, $0<\gamma<\frac{1}{2}$, by the embedding theorems and regularity theory for elliptic equation.
Using Harnack inequality \cite{Gilbarg2001}, we have $u_{\lambda}(x)>0$ for $x\in\Omega$.  If there exist $x_0\in\partial\Omega$ such that $u_{\lambda}(x_0)=0$, then for any $x\in\Omega$, $u_{\lambda}(x)>u_{\lambda}(x_0)$ and from the first equation of \eqref{inte},
we have $\partial_{\nu}u(x_0)<0$ by Hopf lemma \cite{wanghopf}, which contracts with $\partial_{\nu}u(x_0)=0$. Thus, $u_{\lambda}>0$ in $\overline{\Omega}$.

Multiplying both sides of the first equation of \eqref{inte} by $\frac{1}{u_{\lambda}}$ and then integrating over $\Omega$, we have
\begin{equation*}
\begin{split}
\lambda\int_{\Omega}u_{\lambda}\mathrm{d}x
    &=\lambda\int_{\Omega}m(x)\mathrm{d}x+\int_{\Omega}\frac{|\nabla u_{\lambda}|^2}{u_{\lambda}^2}\mathrm{d}x
        +D\int_{\Omega}\frac{{|\nabla u_{\lambda}|^2}}{u_{\lambda}}\mathrm{d}x  \\
    &\geqslant\lambda\int_{\Omega}m(x)\mathrm{d}x+\frac{1}{\max\limits_{\lambda\in(0,+\infty),\overline{\Omega}}\{u_{\lambda}\}}\int_{\Omega}
         \frac{|\nabla u_{\lambda}|^2}{u_{\lambda}}\mathrm{d}x+D\int_{\Omega}\frac{|\nabla u_{\lambda}|^2}{u_{\lambda}}\mathrm{d}x \\
    &=\lambda\int_{\Omega}m(x)\mathrm{d}x+\left[\frac{1}{\max\limits_{\lambda\in(0,+\infty),\overline{\Omega}}\{u_{\lambda}\}}
        +D\right]\int_{\Omega}\frac{|\nabla u_{\lambda}|^2}{u_{\lambda}}\mathrm{d}x.
\end{split}
\end{equation*}
From ${\bf(O)}$,
we obtain
$$
\int_{\Omega}u_{\lambda}(x)\mathrm{d}x>\int_{\Omega}m(x)\mathrm{d}x.
$$
This completes the proof of Theorem \ref{existence}.

\section{The proof of Theorem \ref{conclusion}}\label{A1}

The steady states of \eqref{model} are determined by the following problem,
\begin{equation}
\begin{cases}
\Delta u(x)+D\nabla\cdot\left(u(x)\nabla u(x)\right)+\lambda u(x)\left(m(x)-u(x)\right)=0, \quad &x\in\Omega,\\
\partial_{\nu}u(x)=0, \quad &x\in\partial{\Omega}.
\end{cases}
\label{boundary}
\end{equation}
Defining the nonlinear operator $T:X\times\mathbb{R}_+\rightarrow Y$ by
$$
T(u,\lambda)=\Delta u+D\nabla\cdot(u\nabla u)+\lambda u(m(x)-u).
$$
Notice that $D_uT(0,\lambda_*)=\Delta+\lambda_*m(x)$ is the self-conjugate 
Fredholm operator from $X\rightarrow Y$ with index zero. Thus,
$$
X=\mathrm{Ker}(\Delta+\lambda_*m(x))\oplus X_1,
$$
$$
Y=\mathrm{Ker}(\Delta+\lambda_*m(x))\oplus Y_1,
$$
where
$$
\mathrm{Ker}(\Delta+\lambda_*m(x))=\mathrm{span}\{\phi\},
$$
$$
X_1=\left\{y\in X:\int_{\Omega}\phi(x)y(x)dx=0\right\},
$$
$$
Y_1=\mathrm{Range}\left\{\Delta+\lambda_*m(x)\right\}=\left\{y\in Y:\int_{\Omega}\phi(x)y(x)\mathrm{d}x=0\right\}.
$$
Obviously, the operator $[\Delta+\lambda_*m(x)]|_{X_1}:X_1\rightarrow Y_1$ is invertible and has a bounded inverse.

\begin{proposition}\label{stst}
Assume that \text{\bf{(H1)}} holds. Then there exists the continuous differentiable mapping $\lambda\longmapsto(\xi_{\lambda},\alpha_{\lambda})$ from $[\lambda_*,\lambda^*]$ to $X_1\times\mathbb{R^+}$ with $0<\lambda^*-\lambda_*\ll1$, such that for any $\lambda\in[\lambda_*,\lambda^*]$, \eqref{boundary} has a steady state with the form of
$$
u_{\lambda}=\alpha_{\lambda}(\lambda-\lambda_*)[\phi+(\lambda-\lambda_*)\xi_{\lambda}].
$$
Moreover, for $\lambda=\lambda_*$,
$$
\alpha_{\lambda_*}=\frac{\int_{\Omega}m(x)\phi^2\mathrm{d}x}{r_1-r_2},
$$
and $\xi_{\lambda_*}\in X_1$ is the unique solution of the following equation
\begin{equation}
[\Delta+\lambda_*m(x)]\xi+\alpha_{\lambda_*}D\nabla\cdot(\phi\nabla\phi)+[m(x)-\lambda_*\alpha_{\lambda_*}\phi]\phi=0.
\label{xi}
\end{equation}
\end{proposition}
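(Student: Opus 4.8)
The plan is to realize $u_\lambda$ as a branch of steady states bifurcating from the trivial solution $u\equiv0$ at $\lambda=\lambda_*$, obtained by a Lyapunov--Schmidt style desingularization together with the implicit function theorem. Set $\mu=\lambda-\lambda_*$ and substitute the ansatz $u=\alpha\mu(\phi+\mu\xi)$, with $\xi\in X_1$ and $\alpha\in\mathbb{R}$, into the steady-state equation $T(u,\lambda)=0$. Dividing out the common factor $\alpha\mu$, using $\Delta\phi+\lambda_* m(x)\phi=0$ to cancel the resulting $O(1)$ term, and dividing once more by $\mu$, one arrives at $\mathcal{G}(\xi,\alpha,\mu)=0$ with
\[
\mathcal{G}(\xi,\alpha,\mu)=[\Delta+\lambda_* m(x)]\xi+D\alpha\,\nabla\cdot\big((\phi+\mu\xi)\nabla(\phi+\mu\xi)\big)+m(x)\phi+\mu\,m(x)\xi-\alpha(\lambda_*+\mu)(\phi+\mu\xi)^2 .
\]
Since the dependence on $(\xi,\alpha,\mu)$ is polynomial, $\mathcal{G}$ defines a $C^\infty$ map $X_1\times\mathbb{R}\times\mathbb{R}\to Y$, the only point requiring care being that the quadratic terms land in $L^2$, which follows from the Sobolev embeddings for $H^2(\Omega)$ (valid when $\dim\Omega\le3$). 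Moreover $\langle\phi,u\rangle=\alpha\mu$ because $\|\phi\|_Y=1$ and $\xi\perp\phi$, so for $\mu\ne0$ the pair $(\xi,\alpha)$ is uniquely determined by $u$; hence solving $\mathcal{G}=0$ is equivalent to solving $T(u,\lambda)=0$ with $u$ of the stated form.

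Next I would analyze the base point $\mu=0$, where $\mathcal{G}(\xi,\alpha,0)=[\Delta+\lambda_* m]\xi+\alpha D\,\nabla\cdot(\phi\nabla\phi)+(m-\alpha\lambda_*\phi)\phi$. Taking the $Y$-inner product with $\phi$ annihilates the term $[\Delta+\lambda_* m]\xi$ (self-conjugacy of $\Delta+\lambda_* m$ and $\phi\in\mathrm{Ker}(\Delta+\lambda_* m)$) and gives $\alpha(r_2-r_1)+\int_\Omega m\phi^2\,\mathrm{d}x=0$; under $\textbf{(H1)}$ this fixes $\alpha_{\lambda_*}=\frac{\int_\Omega m\phi^2\,\mathrm{d}x}{r_1-r_2}$, which is strictly positive since multiplying $-\Delta\phi=\lambda_* m\phi$ by $\phi$ and integrating yields $\int_\Omega m\phi^2\,\mathrm{d}x=\lambda_*^{-1}\int_\Omega|\nabla\phi|^2\,\mathrm{d}x>0$ ($\phi$ being nonconstant). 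With $\alpha_{\lambda_*}$ so chosen, the remaining part of $\mathcal{G}(\cdot,\alpha_{\lambda_*},0)$ lies in $Y_1=\mathrm{Range}(\Delta+\lambda_* m)$, so $\xi_{\lambda_*}$ is recovered as the unique element of $X_1$ solving \eqref{xi}, using the bounded inverse of $[\Delta+\lambda_* m]|_{X_1}\colon X_1\to Y_1$.

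The decisive step is to show that $L:=D_{(\xi,\alpha)}\mathcal{G}(\xi_{\lambda_*},\alpha_{\lambda_*},0)$ is an isomorphism from $X_1\times\mathbb{R}$ onto $Y$; the implicit function theorem then produces, for some $\lambda^*>\lambda_*$, a $C^1$ map $\lambda\mapsto(\xi_\lambda,\alpha_\lambda)$ on $[\lambda_*,\lambda^*]$ with $\mathcal{G}(\xi_\lambda,\alpha_\lambda,\lambda-\lambda_*)=0$, and shrinking $\lambda^*$ keeps $\alpha_\lambda>0$ and $\phi+(\lambda-\lambda_*)\xi_\lambda>0$ on $\overline{\Omega}$ (since $\inf_{\overline{\Omega}}\phi>0$), whence $u_\lambda=\alpha_\lambda(\lambda-\lambda_*)[\phi+(\lambda-\lambda_*)\xi_\lambda]$ is a steady state of \eqref{model} of the required form, positive for $\lambda>\lambda_*$. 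Since every term of $\mathcal{G}$ other than the first two carries a factor $\mu$, one computes $L(\eta,\beta)=[\Delta+\lambda_* m]\eta+\beta\big(D\,\nabla\cdot(\phi\nabla\phi)-\lambda_*\phi^2\big)$; given $g=g_1+c\phi\in Y_1\oplus\mathrm{span}\{\phi\}$, pairing with $\phi$ gives $\beta(r_2-r_1)=c$ (solvable precisely by $\textbf{(H1)}$), and then $\eta\in X_1$ is recovered uniquely via the bounded inverse, so $L$ is a bounded bijection. I expect the main obstacle to be the bookkeeping around the double desingularization — making sure $\mathcal{G}$ is genuinely smooth at $\mu=0$ and that the nonlinear term $\nabla\cdot(u\nabla u)$ is handled in the correct function spaces — together with keeping straight which quantities lie in $X_1$ versus $Y_1$; once that is in place, $\textbf{(H1)}$ turns out to be exactly the nondegeneracy condition needed both to solve for $\alpha_{\lambda_*}$ and to invert $L$, and the rest is the standard Crandall--Rabinowitz reduction.
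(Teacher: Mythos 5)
Your proposal is correct and follows essentially the same route as the paper: your map $\mathcal{G}(\xi,\alpha,\mu)$ is exactly the paper's $f(\xi,\alpha,\lambda)$ with $\mu=\lambda-\lambda_*$, the base-point computation fixing $\alpha_{\lambda_*}$ and $\xi_{\lambda_*}$ via the decomposition $Y=\mathrm{Ker}(\Delta+\lambda_*m)\oplus Y_1$ is the same, and the invertibility of $D_{(\xi,\alpha)}$ at the base point rests on the same observation that $\textbf{(H1)}$ forces $D\nabla\cdot(\phi\nabla\phi)-\lambda_*\phi^2\notin\mathrm{Range}(\Delta+\lambda_*m)$. The extra details you supply (smoothness of the desingularized map, the explicit surjectivity check, positivity of $\phi+(\lambda-\lambda_*)\xi_\lambda$) are consistent with and merely flesh out the paper's argument.
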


\begin{proof}
From \eqref{eigenvalue prombel}, we have
\begin{equation*}
\lambda_*\int_{\Omega}m(x)\phi^2(x)\mathrm{d}x
=\int_{\Omega}|\nabla\phi(x)|^2\mathrm{d}x>0,
\end{equation*}
which implies that $\alpha_{\lambda_*}$ is positive. By the definition of $\alpha_{\lambda_*}$, we know that
$$
\alpha_{\lambda_*}D\nabla\cdot(\phi\nabla\phi)+[m(x)-\lambda_*\alpha_{\lambda_*}\phi]\phi\in \mathrm{Range}\{\Delta+\lambda_*m(x)\}=Y_1.
$$
Therefore, \eqref{xi} has the unique solution $\xi_{\lambda_*}$.
Define the mapping $f:X_1\times\mathbb{R}\times\mathbb{R}_+\rightarrow Y$ by
$$
f(\xi,\alpha,\lambda)=[\Delta+\lambda_* m(x)]\xi+\alpha D\nabla \cdot[f_1(\xi,\lambda)\nabla f_1(\xi,\lambda)]+[m(x)-\lambda\alpha f_1(\xi,\lambda)]f_1(\xi,\lambda)
$$
where $f_1(\xi,\lambda)=\phi+(\lambda-\lambda_*)\xi$. By the definition of $\xi_{\lambda_*}$, we have $f(\xi_{\lambda_*},\alpha_{\lambda_*},\lambda_*)=0$
and
$$
D_{(\xi,\alpha)}f(\xi_{\lambda_*},\alpha_{\lambda_*},\lambda_*)(\epsilon,\kappa)=[\Delta+{\lambda_*}m(x)]\epsilon+D\nabla\cdot(\phi\nabla\phi)\kappa-\lambda_*\phi^2\kappa,
$$
where $(\epsilon,\kappa)\in X_1\times\mathbb{R}$.
By ${\bf(H1)}$, we have
$$
D\nabla\cdot(\phi\nabla\phi)-\lambda_*\phi^2\notin \mathrm{Range}\{\Delta+\lambda_*m(x)\}.
$$
Therefore, $D_{(\xi,\alpha)}f(\xi_{\lambda_*}, \alpha_{\lambda_*}, {\lambda_*})$ is bijection from $X_1\times\mathbb{R}\rightarrow Y$. From implicit function theorem, there exits a continuous differential mapping $\lambda\longmapsto(\xi_{\lambda},\alpha_{\lambda})$ from $[\lambda_*,\lambda^*]$ to $X_1\times\mathbb{R_+}$ with $\lambda^*>\lambda_*$, such that for any $\lambda\in[\lambda_*,\lambda^*]$, $f(\xi_{\lambda},\alpha_{\lambda},\lambda)=0$. Then
 $$
 u_{\lambda}=\alpha_{\lambda}(\lambda-\lambda_*)[\phi+(\lambda-\lambda_*)\xi_{\lambda}]
 $$
 is the solution of \eqref{boundary}.
\end{proof}

Thoughout this section, denote $$f_1(\xi,\lambda)=\phi+(\lambda-\lambda_*)\xi,\quad u_{\lambda}=(\lambda-\lambda_*)\alpha_{\lambda}f_1.$$
The linearized equation of \eqref{model} at $u_{\lambda}(x)$ is given by
 \begin{equation}
 \begin{cases}
 \frac{\partial{u}}{\partial{t}}=\Delta u+D\nabla\cdot(u_{\lambda}\nabla u_{\tau})+D\nabla\cdot(u\nabla
      u_{\lambda})+\lambda[m(x)-u_{\lambda}]u-\lambda u_{\lambda}u,\\
 \partial_{\nu}u=0.
 \end{cases}
 \label{linear}
 \end{equation}
For each $(\mu,\lambda,\tau)\in\mathbb{C}\times\mathbb{R}_{+}^2$, we introduce an linear operator $\Delta(\lambda,\mu,\tau)$ on $X_{\mathbb{C}}$ by
\begin{equation}
\Delta(\lambda,\mu,\tau)\psi=\Delta\psi+D\nabla\cdot(u_{\lambda}\nabla\psi)e^{-\mu\tau}+D\nabla\cdot(\psi\nabla u_{\lambda})+\lambda[m(x)-u_{\lambda}]\psi-\lambda u_{\lambda}\psi-\mu\psi=0.
\label{characteristic}
\end{equation}
We call $\mu\in\mathbb{C}$ an eigenvalue of \eqref{linear} with eigenfunction $\psi$, if there exist $(\lambda,\tau)\in\mathbb{R}^2_{+}$ and $\psi\in X_{\mathbb{C}}\setminus \{0\}$ solving the equation \eqref{characteristic}. Without loss of generally, we assume that $\|\psi\|_{Y_{\mathbb{C}}}=1$.
In the following, we will focus on the distribution of the eigenvalues of \eqref{linear}.
First of all, we present the following two lemmas on the prior estimates for the eigenvalue $\mu_\lambda$ and eigenfunction $\psi_\lambda$.

\begin{lemma}\label{nabla}
Assume that $\bf(H1)$ and $\bf(H2)$ holds. Then, there exists a constant $C$, such that for any $\lambda\in(\lambda_*,\lambda^*]$ and $(\mu_{\lambda},\tau_{\lambda},\psi_{\lambda})\in\mathbb{C}\times\mathbb{R}_{+}\times X_{\mathbb{C}}\setminus\{0\}$ with $\mathrm{Re}\mu_{\lambda}\geqslant0$ solving \eqref{characteristic},
$$
||\nabla\psi_{\lambda}||_{Y_{\mathbb{C}}}\leqslant C.
$$
\end{lemma}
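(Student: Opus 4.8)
The plan is to test the characteristic equation \eqref{characteristic} against $\psi_\lambda$ itself (in the $Y_{\mathbb C}$ inner product) and extract an $H^1$ bound from the principal Laplacian term, controlling all lower-order contributions by the smallness of $u_\lambda$ and of $D$. Concretely, I would take the $Y_{\mathbb C}$-pairing of \eqref{characteristic} with $\psi_\lambda$, integrate by parts using the Neumann boundary condition, and isolate $\|\nabla\psi_\lambda\|_{Y_{\mathbb C}}^2$. The Laplacian term gives $-\|\nabla\psi_\lambda\|_{Y_{\mathbb C}}^2$; the term $-\mu_\lambda\|\psi_\lambda\|_{Y_{\mathbb C}}^2 = -\mu_\lambda$ has nonnegative real part being subtracted (using $\mathrm{Re}\,\mu_\lambda\geqslant 0$ and $\|\psi_\lambda\|_{Y_{\mathbb C}}=1$), so taking real parts it only helps; the zeroth-order reaction terms $\lambda[m(x)-u_\lambda]\psi_\lambda - \lambda u_\lambda\psi_\lambda$ contribute something bounded by $C\lambda$ since $m\in C^\alpha(\overline\Omega)$ and $u_\lambda\to 0$ uniformly as $\lambda\to\lambda_*$ (from Proposition \ref{stst}, $u_\lambda = O(\lambda-\lambda_*)$). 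The two memory/drift terms are the delicate ones.

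For $D\nabla\cdot(\psi_\lambda\nabla u_\lambda)$, after pairing with $\psi_\lambda$ and integrating by parts I get $-D\int_\Omega \psi_\lambda\nabla u_\lambda\cdot\nabla\overline{\psi_\lambda}\,\mathrm dx$, which is bounded by $|D|\,\|\nabla u_\lambda\|_{\infty}\,\|\psi_\lambda\|_{Y_{\mathbb C}}\,\|\nabla\psi_\lambda\|_{Y_{\mathbb C}}$; since $u_\lambda$ and its derivatives are $O(\lambda-\lambda_*)$ uniformly (again by Proposition \ref{stst} and elliptic regularity, $\nabla u_\lambda = \alpha_\lambda(\lambda-\lambda_*)\nabla f_1 + O((\lambda-\lambda_*)^2)$ in $C(\overline\Omega)$), this is at most $\varepsilon(\lambda)\|\nabla\psi_\lambda\|_{Y_{\mathbb C}}$ with $\varepsilon(\lambda)\to 0$, absorbable into the left side. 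The term $D\nabla\cdot(u_\lambda\nabla\psi_\lambda)e^{-\mu_\lambda\tau_\lambda}$ is the crux: pairing with $\psi_\lambda$ and integrating by parts yields $-De^{-\mu_\lambda\tau_\lambda}\int_\Omega u_\lambda|\nabla\psi_\lambda|^2\,\mathrm dx$. Since $\mathrm{Re}\,\mu_\lambda\geqslant 0$ and $\tau_\lambda\geqslant 0$ we have $|e^{-\mu_\lambda\tau_\lambda}|\leqslant 1$, and $\|u_\lambda\|_\infty < D_*^{-1}$ by $\bf(H2)$ (or simply $\|u_\lambda\|_\infty\to 0$ on the small interval), so this term is bounded in modulus by $|D|\,\|u_\lambda\|_\infty\,\|\nabla\psi_\lambda\|_{Y_{\mathbb C}}^2$ with coefficient $|D|\,\|u_\lambda\|_\infty < 1$ — strictly less than $1$, so it can be moved to the left-hand side leaving a positive multiple of $\|\nabla\psi_\lambda\|_{Y_{\mathbb C}}^2$.

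Collecting: taking real parts of the pairing gives
$$
\left(1 - |D|\,\max_{\overline\Omega}u_\lambda\right)\|\nabla\psi_\lambda\|_{Y_{\mathbb C}}^2 \;\leqslant\; \varepsilon(\lambda)\,\|\nabla\psi_\lambda\|_{Y_{\mathbb C}} + C\lambda,
$$
and since the coefficient on the left is bounded below by a positive constant uniformly in $\lambda\in(\lambda_*,\lambda^*]$ (that is exactly what $\bf(H2)$ buys, possibly after shrinking $\lambda^*$), a routine quadratic-inequality argument yields $\|\nabla\psi_\lambda\|_{Y_{\mathbb C}}\leqslant C$ with $C$ independent of $\lambda$, $\tau_\lambda$, $\mu_\lambda$. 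The main obstacle is genuinely the drift term $D\nabla\cdot(u_\lambda\nabla\psi_\lambda)e^{-\mu_\lambda\tau_\lambda}$: one must be careful that after integration by parts no uncontrolled $\nabla\psi_\lambda$-squared term with coefficient $\geqslant 1$ survives, and that the delay factor is handled purely through $|e^{-\mu_\lambda\tau_\lambda}|\leqslant 1$ without needing any information on $\tau_\lambda$ or $\mathrm{Im}\,\mu_\lambda$ — this is precisely why the hypothesis $\mathrm{Re}\,\mu_\lambda\geqslant 0$ and the bound $\bf(H2)$ on $|D|$ are both indispensable.
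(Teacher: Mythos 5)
Your proposal is correct and follows essentially the same route as the paper's proof: pair \eqref{characteristic} with $\psi_\lambda$, integrate by parts, use $\mathrm{Re}\,\mu_\lambda\geqslant 0$ together with $|e^{-\mu_\lambda\tau_\lambda}|\leqslant 1$ to control the delayed term, and invoke ${\bf(H2)}$ so that the coefficient $|D|\max_{\overline\Omega}u_\lambda<1$ in front of $\|\nabla\psi_\lambda\|^2_{Y_{\mathbb{C}}}$ can be moved to the left. The only minor deviation is in the drift term $D\nabla\cdot(\psi_\lambda\nabla u_\lambda)$: the paper rewrites its real part as $\tfrac{D}{2}\langle\psi_\lambda,\psi_\lambda\Delta u_\lambda\rangle$ via the identity from \cite{anqi}, so no $\nabla\psi_\lambda$ survives there, whereas you bound it by Cauchy--Schwarz and close with a quadratic inequality --- both work, given the $C^{2+\gamma}$ bound on $u_\lambda$ that the paper establishes first.
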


\begin{proof}
It follows from the continuity of $\lambda\longmapsto(\xi_{\lambda},\alpha_{\lambda})$ that $\alpha_{\lambda}\in\mathbb{R}$ and $\xi_{\lambda}, u_{\lambda}\in H^2$ are bounded for any $\lambda\in[\lambda_{*},\lambda^*]$.
By the embedding theorems, we know $u_{\lambda},\xi_{\lambda}\in C^{1+\gamma}(\overline{\Omega})$ for $0<\gamma<\frac{1}{2}$.
Recall that
$$
\begin{cases}
(1+Du_{\lambda})\Delta u_{\lambda}+D\nabla u_{\lambda}\cdot \nabla u_{\lambda}+\lambda u_{\lambda}\left(m(x)-u_{\lambda}\right)=0,\\
\partial_{\nu}u_\lambda=0.
\end{cases}
$$
Since $1+Du_{\lambda}\in C^{1+\gamma}(\overline{\Omega})$ and $D\nabla u_{\lambda}\in C^{\gamma}(\overline{\Omega})$, by the regularity theory for elliptic equations, we obtain $u_{\lambda},\xi_{\lambda}\in C^{2+\gamma}(\overline{\Omega})$ and there exits a constant $C_1>0$ such that
\begin{equation}
|\xi_{\lambda}|_{2+\gamma}\leqslant C_1,\quad |u_{\lambda}|_{2+\gamma}\leqslant C_1.
\label{fangsuoC}
\end{equation}

From Lemma 3.1 in \cite{anqi}, we have that
\begin{equation}
\mathrm{Re}\{\langle \psi_{\lambda},\nabla\cdot(\psi_{\lambda}\nabla u_{\lambda}) \rangle\}
=\frac{1}{2}\langle \psi_{\lambda},\psi_{\lambda}\Delta u_{\lambda} \rangle.
\label{realFS}
\end{equation}
Taking the inner product of $\psi_{\lambda}$ with both sides of $\Delta(\mu_{\lambda},\lambda,\tau_{\lambda})\psi_{\lambda}=0$, and using $\mathrm{Re}{\mu_{\lambda}}\geqslant0$, \eqref{fangsuoC} and \eqref{realFS}, we get
\begin{equation}
\begin{split}
||\nabla\psi_{\lambda}||^2_{Y_{\mathbb{C}}}
&=-D\langle \nabla\psi_{\lambda}, u_{\lambda}\nabla\psi_{\lambda} \rangle \mathrm{Re}\{e^{-\mu_{\lambda}\tau_{\lambda}}\}
  +\frac{D}{2}\langle \psi_{\lambda},\psi_{\lambda}\Delta u_{\lambda} \rangle\\
&\quad+\langle \psi_{\lambda},[\lambda m(x)-2\lambda u_{\lambda}-\mathrm{Re}\{\mu_{\lambda}\}]\psi_{\lambda} \rangle\\
&\leqslant|D|\max \limits_{\lambda\in[\lambda_*,\lambda^*],\overline{\Omega}}\{u_{\lambda}\}||\nabla\psi_{\lambda}||^2_{Y_{\mathbb{C}}}\\
&\quad+   \left[\frac{|D|}{2}||\Delta u_{\lambda}||_{\infty}+\|\lambda m(x)-2\lambda u_{\lambda}\|_{\infty}\right]||\psi_{\lambda}||_{Y_{\mathrm{C}}}^2.
\end{split}
\end{equation}
Therefore,
$$
||\nabla\psi_{\lambda}||^2_{Y_{\mathbb{C}}}\leqslant   \frac{\tilde{C}}{1-|D|\max \limits_{\lambda\in[\lambda_*,\lambda^*],\overline{\Omega}}\{u_{\lambda}\}}||\psi_{\lambda}||_{Y_{\mathrm{C}}}^2
:=C
$$
where $\tilde{C}=\frac{|D|}{2}||\Delta u_{\lambda}||_{\infty}+\|\lambda m(x)-2\lambda u_{\lambda}\|_{\infty}$.
\end{proof}
\begin{lemma}\label{bound}
Assume that $\bf(H1)$ and $\bf(H2)$ holds. If $(\mu_{\lambda},\tau_{\lambda},\psi_{\lambda})\in\mathbb{C}\times\mathbb{R}_+\times X_{\mathbb{C}}\setminus\{0\}$ is the solution of \eqref{characteristic} with $\mathrm{Re}\mu_{\lambda}\geqslant0$, then $|\frac{\mu_{\lambda}}{\lambda-\lambda_*}|$ is bounded for $\lambda\in(\lambda_*,\lambda^*]$.
\end{lemma}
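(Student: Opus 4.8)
The estimate is genuinely two–scale, so the plan is: first prove the crude uniform bound $|\mu_\lambda|\le M$, and then upgrade it to the sharp rate $|\mu_\lambda|=O(\lambda-\lambda_*)$ as $\lambda\downarrow\lambda_*$.

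\emph{Step 1 (crude bound).} Arguing as in the proof of Lemma \ref{nabla}, take the inner product of the characteristic equation \eqref{characteristic} with $\psi_\lambda$, use $\langle\psi_\lambda,\Delta\psi_\lambda\rangle=-\|\nabla\psi_\lambda\|_{Y_{\mathbb{C}}}^2$, the identity \eqref{realFS}, $|e^{-\mu_\lambda\tau_\lambda}|\le1$ (valid since $\mathrm{Re}\,\mu_\lambda\ge0$, $\tau_\lambda\ge0$), and solve for $\mu_\lambda$. Bounding every remaining term by the uniform constants furnished by \eqref{fangsuoC}, Lemma \ref{nabla} and $\bf(H2)$ gives a constant $M$, independent of $\lambda$, with $|\mu_\lambda|\le M$ for all $\lambda\in(\lambda_*,\lambda^*]$. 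Hence $|\mu_\lambda/(\lambda-\lambda_*)|$ is automatically bounded on any interval $[\lambda_*+\delta,\lambda^*]$, and only the regime $\lambda\to\lambda_*$ remains.

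\emph{Step 2 (identifying the limiting profile).} Suppose, for contradiction, there are $\lambda_n\in(\lambda_*,\lambda^*]$ with $|\mu_{\lambda_n}|/(\lambda_n-\lambda_*)\to\infty$; since $|\mu_{\lambda_n}|\le M$ this forces $\lambda_n\to\lambda_*$. By Lemma \ref{nabla} the $\psi_{\lambda_n}$ are bounded in $(H^1(\Omega))_{\mathbb{C}}$ with $\|\psi_{\lambda_n}\|_{Y_{\mathbb{C}}}=1$, so along a subsequence $\psi_{\lambda_n}\rightharpoonup\psi_0$ in $H^1$ and $\psi_{\lambda_n}\to\psi_0$ in $Y_{\mathbb{C}}$ (Rellich), with $\|\psi_0\|_{Y_{\mathbb{C}}}=1$; also $\mu_{\lambda_n}\to\mu_0$, $\mathrm{Re}\,\mu_0\ge0$, and $|e^{-\mu_{\lambda_n}\tau_{\lambda_n}}|\le1$. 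By Proposition \ref{stst} and \eqref{fangsuoC}, $u_{\lambda_n}=(\lambda_n-\lambda_*)\alpha_{\lambda_n}\bigl(\phi+(\lambda_n-\lambda_*)\xi_{\lambda_n}\bigr)\to0$ in $C^{1}(\overline{\Omega})$. Writing \eqref{characteristic} in weak form (test against $(H^1(\Omega))_{\mathbb{C}}$, integrate the divergence terms by parts) and passing to the limit — the memory, cross and quadratic terms all drop out because each carries a factor $\|u_{\lambda_n}\|_{C^1}\to0$ while $|e^{-\mu_{\lambda_n}\tau_{\lambda_n}}|\le1$ and $\psi_{\lambda_n}$ is $H^1$–bounded — one finds $\psi_0$ is a weak, hence (elliptic regularity) classical, solution of $\Delta\psi_0+\lambda_*m(x)\psi_0=\mu_0\psi_0$ under Neumann conditions. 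Since $\Delta+\lambda_*m(x)$ is self-adjoint with spectrum in $(-\infty,0]$ and $0$ a simple eigenvalue with eigenfunction $\phi$ (the principal–eigenvalue statement for \eqref{eigenvalue prombel}), $\mathrm{Re}\,\mu_0\ge0$ forces $\mu_0=0$ and $\psi_0=c\phi$ with $|c|=\|\psi_0\|_{Y_{\mathbb{C}}}=1$. In particular $a_n:=\langle\phi,\psi_{\lambda_n}\rangle\to c\neq0$.

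\emph{Step 3 (extracting the rate and concluding).} Pair \eqref{characteristic} with $\phi$. Using $\langle\phi,\Delta\psi_{\lambda_n}\rangle=\langle\Delta\phi,\psi_{\lambda_n}\rangle=-\lambda_*\langle\phi,m\psi_{\lambda_n}\rangle$ and $\int_\Omega\phi\,\nabla\!\cdot\!(v\nabla w)\,\mathrm{d}x=-\int_\Omega v\,\nabla\phi\cdot\nabla w\,\mathrm{d}x$ for $w$ satisfying Neumann conditions, one obtains
\begin{equation*}
\begin{split}
\mu_{\lambda_n}a_n&=(\lambda_n-\lambda_*)\langle\phi,m\psi_{\lambda_n}\rangle
 -De^{-\mu_{\lambda_n}\tau_{\lambda_n}}\int_\Omega u_{\lambda_n}\,\nabla\phi\cdot\nabla\psi_{\lambda_n}\,\mathrm{d}x\\
&\quad-D\int_\Omega \psi_{\lambda_n}\,\nabla\phi\cdot\nabla u_{\lambda_n}\,\mathrm{d}x
 -2\lambda_n\int_\Omega u_{\lambda_n}\phi\,\psi_{\lambda_n}\,\mathrm{d}x.
\end{split}
\end{equation*}
The last three integrals are each $O(\lambda_n-\lambda_*)$, since $\|u_{\lambda_n}\|_{C^1}=O(\lambda_n-\lambda_*)$ while $\psi_{\lambda_n},\nabla\psi_{\lambda_n}$ are bounded in $Y_{\mathbb{C}}$ (Lemma \ref{nabla}), $\phi\in C^{2+\alpha}(\overline{\Omega})$, and $|e^{-\mu_{\lambda_n}\tau_{\lambda_n}}|\le1$; moreover $|\langle\phi,m\psi_{\lambda_n}\rangle|\le\|m\phi\|_{Y_{\mathbb{C}}}$. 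Hence $|\mu_{\lambda_n}|\,|a_n|\le K(\lambda_n-\lambda_*)$ for some $K$ independent of $n$, so $|\mu_{\lambda_n}|/(\lambda_n-\lambda_*)\le K/|a_n|\to K/|c|<\infty$, contradicting the choice of $\{\lambda_n\}$. This proves the lemma.

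\emph{Main obstacle.} The crude bound of Step 1 is blind to the degeneration $u_\lambda\to0$, and the $\phi$–projection of Step 3 only controls the product $\mu_{\lambda_n}a_n$, which is useless unless $a_n$ is known to stay away from $0$; supplying exactly this nondegeneracy of the limiting eigenfunction is the role — and the delicate part — of the compactness argument in Step 2. The time delay enters only through $|e^{-\mu_{\lambda_n}\tau_{\lambda_n}}|\le1$, so the possibly unbounded sequence $\tau_{\lambda_n}$ causes no difficulty.
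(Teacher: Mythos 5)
Your proof is correct, but it follows a genuinely different route from the paper's. The paper's argument is a direct, one-step variational estimate: it introduces the self-adjoint operator $A(\lambda)\psi=\nabla\cdot\left[(1+Du_{\lambda})\nabla\psi\right]+\lambda\left[m(x)-u_{\lambda}\right]\psi$, observes that $u_{\lambda}>0$ solves $A(\lambda)u_{\lambda}=0$ so that $0$ is the principal eigenvalue and hence $\langle\psi,A(\lambda)\psi\rangle\leqslant0$ (and is real) for every $\psi$, and then rewrites $\langle\psi_{\lambda},\Delta(\lambda,\mu_{\lambda},\tau_{\lambda})\psi_{\lambda}\rangle=0$ so that $\mu_{\lambda}$ equals $\langle\psi_{\lambda},A(\lambda)\psi_{\lambda}\rangle$ plus terms each carrying an explicit factor $u_{\lambda}=(\lambda-\lambda_*)\alpha_{\lambda}f_1$; dividing by $\lambda-\lambda_*$ and invoking Lemma \ref{nabla} bounds the real and imaginary parts separately, with no compactness and no contradiction argument. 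You instead prove a crude uniform bound on $\mu_{\lambda}$, run a Rellich-compactness argument to identify the limiting eigenpair $(\mu_0,\psi_0)=(0,c\phi)$, and then project \eqref{characteristic} onto $\phi$ to extract the rate, using the nonvanishing of $\langle\phi,\psi_{\lambda_n}\rangle$ in the limit. Your scheme is sound (the self-adjointness and the sign of the spectrum of $\Delta+\lambda_*m$ under Neumann conditions do give $\mu_0=0$ and $\psi_0\in\mathrm{span}\{\phi\}$, and all discarded terms are indeed $O(\lambda_n-\lambda_*)$ by Proposition \ref{stst}, \eqref{fangsuoC} and Lemma \ref{nabla}), and it has the virtue of not requiring one to spot the decomposition through $A(\lambda)$; the price is that it is a proof by contradiction with no explicit constant, and it essentially re-derives in Step 2 the convergence $\psi_{\lambda}\to\phi$ that the paper only needs (and establishes) later, in the proof of Proposition \ref{0W}. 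The paper's use of the quadratic form of $A(\lambda)$ is the cleaner and shorter mechanism, and it is worth internalizing since the same trick reappears in Lemma \ref{bound2}.
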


\begin{proof}
Since $(\mu_{\lambda},\tau_{\lambda},\psi_{\lambda})$ is the solution of \eqref{characteristic} with $\lambda\in(\lambda_*,\lambda^*]$, we have that
$$
\langle\psi_{\lambda},\Delta\psi_{\lambda}+D\nabla\cdot(u_{\lambda}\nabla\psi_{\lambda})e^{-\mu_{\lambda}\tau_{\lambda}}+D\nabla\cdot(\psi_{\lambda}\nabla u_{\lambda})+\lambda m(x)\psi_{\lambda}-2\lambda u_{\lambda}\psi_{\lambda}-\mu_{\lambda}\psi_{\lambda}\rangle=0.
$$
Let
\begin{equation*}
\begin{split}
A(\lambda)\psi:
&=\Delta\psi+D\nabla\cdot(u_{\lambda}\nabla\psi)+\lambda\psi[m(x)-u_{\lambda}]\\
&=\nabla\cdot[(1+Du_{\lambda})\nabla\psi]+\lambda[m(x)-u_{\lambda}]\psi.
\end{split}
\end{equation*}
Because $u_{\lambda}>0$ is the solution of $A(\lambda)\psi=0$, we know $0$ is the principal eigenvalue of $A(\lambda)$, and therefore, $\langle \psi,A(\lambda)\psi\rangle\leqslant0$ for any $\psi\in X_{\mathbb{C}}$.
From $\Delta(\mu_{\lambda},\lambda,\tau_{\lambda})\psi_{\lambda}=0$, we obtain that
\begin{equation*}
\begin{split}
0\geqslant\langle \psi_{\lambda},A(\lambda)\psi_{\lambda} \rangle&=\mu_{\lambda}
-D(e^{-\mu_{\lambda}\tau_{\lambda}}-1)\langle \psi_{\lambda},\nabla\cdot(u_{\lambda}\nabla\psi_{\lambda}) \rangle\\
&\quad -D\langle \psi_{\lambda},\nabla\cdot(\psi_{\lambda}\nabla u_{\lambda}) \rangle+\langle \psi_{\lambda},\lambda u_{\lambda}\psi_{\lambda} \rangle.
\end{split}
\end{equation*}
Since $\mathrm{Re}\mu_{\lambda}\geqslant0$,
\begin{equation*}
\begin{split}
0&\leqslant \mathrm{Re}\left(\frac{\mu_{\lambda}}{\lambda-\lambda_*}\right)\\
&\leqslant \mathrm{Re}\left\{\frac{1}{\lambda-\lambda_*}\left[
  D(e^{-\mu_{\lambda}\tau_{\lambda}}-1)\langle \psi_{\lambda},\nabla\cdot(u_{\lambda}\nabla\psi_{\lambda}) \rangle
  +D\langle \psi_{\lambda},\nabla\cdot(\psi_{\lambda}\nabla u_{\lambda}) \rangle-\langle \psi_{\lambda}, \lambda u_{\lambda}\psi_{\lambda}\rangle
  \right]\right\}\\
&=\mathrm{Re}\left\{\alpha_{\lambda}D(1-e^{-\mu_{\lambda}\tau_{\lambda}})\langle \nabla\psi_{\lambda},f_1\nabla\psi_{\lambda} \rangle
  -\alpha_{\lambda}D\langle \nabla\psi_{\lambda},\psi_{\lambda}\nabla f_1 \rangle
  -\lambda\alpha_{\lambda} \langle \psi_{\lambda}, f_1\psi_{\lambda} \rangle\right\}\\
&\leqslant\alpha_{\lambda}|D|\left[
2||f_1||_{\infty}||\nabla\psi_{\lambda}||^2_{Y_{\mathbb{C}}}+||\nabla f_1||_{\infty}||\nabla\psi_{\lambda}||_{Y_{\mathbb{C}}}||\psi_{\lambda}||_{Y_{\mathbb{C}}}
\right]+\lambda\alpha_{\lambda}||f_1||_{\infty}||\psi_{\lambda}||^2_{Y_{\mathbb{C}}}.
\end{split}
\end{equation*}
Similarly,
\begin{equation*}
\begin{split}
\left|\mathrm{Im}\left(\frac{\mu_{\lambda}}{\lambda-\lambda_*}\right)\right|
\leqslant\alpha_{\lambda}|D|\left[||f_1||_{\infty}||\nabla\psi_{\lambda}||^2_{Y_{\mathbb{C}}}+||\nabla f_1||_{\infty}||\nabla\psi_{\lambda}||_{Y_{\mathbb{C}}}||\psi_{\lambda}||_{Y_{\mathbb{C}}}\right].
\end{split}
\end{equation*}
From Lemma \ref{nabla}, it is evident that $\left|\mathrm{Re}\left(\frac{\mu_{\lambda}}{\lambda-\lambda_*}\right)\right|$ and $\left|\mathrm{Im}\left(\frac{\mu_{\lambda}}{\lambda-\lambda_*}\right)\right|$ are bounded for $\lambda\in(\lambda_*,\lambda^*]$, and so is $\left|\frac{\mu_{\lambda}}{\lambda-\lambda_*}\right|$.
\end{proof}

From Lemma \ref{bound}, we have the following stability result of \eqref{model} for $\tau=0$.
\begin{proposition}{\label{0W}}
 Assume that $\bf(H1)$ and $\bf(H2)$ holds. If $\tau=0$, then there exists $\overline{\lambda}\in(\lambda_*,\lambda^*]$, such that all the eigenvalues of \eqref{linear} have negative real parts for any $\lambda\in(\lambda_*,\overline{\lambda}]$.
\end{proposition}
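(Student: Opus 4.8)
The plan is a compactness-and-contradiction argument anchored at $\lambda=\lambda_*$, in the spirit of the method from \cite{anqi}. Suppose the conclusion fails. Then there are $\lambda_n\downarrow\lambda_*$ in $(\lambda_*,\lambda^*]$, eigenvalues $\mu_n:=\mu_{\lambda_n}$ of \eqref{linear} at $\tau=0$ with $\mathrm{Re}\,\mu_n\geqslant0$, and eigenfunctions $\psi_n:=\psi_{\lambda_n}\in X_{\mathbb{C}}$ with $\|\psi_n\|_{Y_{\mathbb{C}}}=1$ solving $\Delta(\mu_n,\lambda_n,0)\psi_n=0$. The goal is to pass to the limit $n\to\infty$ and show that $\mu_n/(\lambda_n-\lambda_*)$ is forced to converge to a \emph{negative} real number, which contradicts $\mathrm{Re}\,\mu_n\geqslant0$.

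\emph{Step 1: extracting limits.} By Lemma \ref{bound}, $|\mu_n/(\lambda_n-\lambda_*)|$ is bounded, so along a subsequence $\mu_n/(\lambda_n-\lambda_*)\to\mu_*$ with $\mathrm{Re}\,\mu_*\geqslant0$; in particular $\mu_n\to0$. By Lemma \ref{nabla}, $\|\nabla\psi_n\|_{Y_{\mathbb{C}}}$ is bounded, so $\{\psi_n\}$ is bounded in $H^1(\Omega)_{\mathbb{C}}$; along a further subsequence $\psi_n\rightharpoonup\psi_*$ weakly in $H^1(\Omega)_{\mathbb{C}}$ and $\psi_n\to\psi_*$ strongly in $Y_{\mathbb{C}}$ by the compact embedding, so $\|\psi_*\|_{Y_{\mathbb{C}}}=1$. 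By Proposition \ref{stst} and \eqref{fangsuoC}, $u_{\lambda_n}=(\lambda_n-\lambda_*)\alpha_{\lambda_n}f_1$ is bounded in $C^{2+\gamma}(\overline{\Omega})$ and $u_{\lambda_n}\to0$ there. Pairing $\Delta(\mu_n,\lambda_n,0)\psi_n=0$ with an arbitrary $\varphi\in X_{\mathbb{C}}$, integrating by parts in the diffusion and memory terms (boundary terms vanish by the Neumann conditions on $\psi_n$ and $u_{\lambda_n}$), and letting $n\to\infty$ annihilates every term carrying $u_{\lambda_n}$ or $\mu_n$ and leaves $-\langle\nabla\varphi,\nabla\psi_*\rangle+\lambda_*\langle m\varphi,\psi_*\rangle=0$ for all $\varphi\in X_{\mathbb{C}}$. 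Thus $\psi_*$ is a weak solution of $(\Delta+\lambda_* m)\psi_*=0$ under the Neumann condition, so $\psi_*\in\mathrm{Ker}(\Delta+\lambda_* m)=\mathrm{span}\{\phi\}$; write $\psi_*=c\phi$ with $|c|=1$.

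\emph{Step 2: the key identity.} Take the $Y_{\mathbb{C}}$-inner product of $\Delta(\mu_n,\lambda_n,0)\psi_n=0$ with $\phi$. Using $(\Delta+\lambda_* m)\phi=0$ and self-adjointness, the elliptic part collapses to $(\lambda_n-\lambda_*)\langle m\phi,\psi_n\rangle$, giving
\[
(\lambda_n-\lambda_*)\langle m\phi,\psi_n\rangle+D\langle\phi,\nabla\cdot(u_{\lambda_n}\nabla\psi_n)\rangle+D\langle\phi,\nabla\cdot(\psi_n\nabla u_{\lambda_n})\rangle-2\lambda_n\langle u_{\lambda_n}\phi,\psi_n\rangle-\mu_n\langle\phi,\psi_n\rangle=0.
\]
Dividing by $\lambda_n-\lambda_*$ and inserting $u_{\lambda_n}=(\lambda_n-\lambda_*)\alpha_{\lambda_n}f_1$, then letting $n\to\infty$ — using $\psi_n\to c\phi$ in $Y_{\mathbb{C}}$, $\nabla\psi_n\rightharpoonup c\nabla\phi$ in $Y_{\mathbb{C}}$, $f_1\to\phi$ in $C^{2+\gamma}(\overline{\Omega})$, $\alpha_{\lambda_n}\to\alpha_{\lambda_*}$, $\mu_n/(\lambda_n-\lambda_*)\to\mu_*$, and keeping the two memory terms in a form in which $\nabla\psi_n$ and $\psi_n$ are paired only against strongly convergent coefficients — yields
\[
c\int_\Omega m\phi^2\,dx+2c\,\alpha_{\lambda_*}D\int_\Omega\phi\,\nabla\cdot(\phi\nabla\phi)\,dx-2c\,\lambda_*\alpha_{\lambda_*}\int_\Omega\phi^3\,dx-c\,\mu_*=0.
\]
Dividing by $c\neq0$ and using $r_1=\lambda_*\int_\Omega\phi^3\,dx$, $r_2=D\int_\Omega\phi\,\nabla\cdot(\phi\nabla\phi)\,dx$, and $\alpha_{\lambda_*}(r_1-r_2)=\int_\Omega m\phi^2\,dx$, this simplifies to
\[
\mu_*=\int_\Omega m\phi^2\,dx-2\alpha_{\lambda_*}(r_1-r_2)=-\int_\Omega m\phi^2\,dx.
\]
As noted in the proof of Proposition \ref{stst}, $\lambda_*\int_\Omega m\phi^2\,dx=\int_\Omega|\nabla\phi|^2\,dx>0$, hence $\mu_*<0$, contradicting $\mathrm{Re}\,\mu_*\geqslant0$. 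Therefore no such sequence exists, i.e. there is $\overline{\lambda}\in(\lambda_*,\lambda^*]$ with the asserted property.

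\emph{Main obstacle.} The delicate point is the term-by-term passage to the limit in the $\phi$-projected, $(\lambda_n-\lambda_*)^{-1}$-normalized identity: since $\psi_n$ converges only weakly in $H^1$, one must rewrite $D\langle\phi,\nabla\cdot(u_{\lambda_n}\nabla\psi_n)\rangle$ by integration by parts and $D\langle\phi,\nabla\cdot(\psi_n\nabla u_{\lambda_n})\rangle$ via $\nabla\cdot(\psi_n\nabla f_1)=\nabla\psi_n\cdot\nabla f_1+\psi_n\Delta f_1$, so that $\nabla\psi_n$ and $\psi_n$ meet only the coefficients $\phi,f_1,\nabla f_1,\Delta f_1$, which converge strongly since they are controlled in $C^{2+\gamma}(\overline{\Omega})$ by \eqref{fangsuoC}; then the weak-times-strong limits are justified. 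Alternatively, since $1+Du_{\lambda_n}$ is bounded below by \textbf{(H2)}, one can bootstrap $\Delta(\mu_n,\lambda_n,0)\psi_n=0$ to a uniform $H^2$ bound on $\psi_n$, upgrading the convergence to strong in $H^1$ and removing the difficulty. The remaining estimates are routine.
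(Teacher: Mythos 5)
Your proof is correct and reaches the paper's conclusion by the same overall strategy: argue by contradiction along a sequence $\lambda_n\downarrow\lambda_*$, use Lemma \ref{bound} to normalize $\mu_n/(\lambda_n-\lambda_*)$, pass to a limit, project onto $\phi$, and find that the limit $\mu_*$ equals $\alpha_{\lambda_*}(r_2-r_1)=-\int_\Omega m\phi^2\,dx<0$, contradicting $\mathrm{Re}\,\mu_*\geqslant0$. Where you differ is in the compactness mechanism. The paper decomposes $\psi_n=\beta_n\phi+(\lambda_n-\lambda_*)z_n$ with $z_n\in(X_1)_{\mathbb{C}}$, spends its Step 1 on a quantitative bound for $\|z_n\|_{Y_{\mathbb{C}}}$ via the second eigenvalue $\tilde\lambda_2$ of $-[\Delta+\lambda_*m]$, upgrades to boundedness in $(X_1)_{\mathbb{C}}$ through the bounded inverse $[\Delta+\lambda_*m]^{-1}$, and then takes the limit of the full equation before projecting. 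You instead keep $\psi_n$ intact, use only the uniform $H^1$ bound of Lemma \ref{nabla} together with Rellich compactness to identify the limit $\psi_*=c\phi$, and pass to the limit directly in the $\phi$-projected, $(\lambda_n-\lambda_*)^{-1}$-scaled identity, where every occurrence of $\nabla\psi_n$ is paired (after integration by parts, with boundary terms killed by the Neumann conditions on $\psi_n$ and $u_{\lambda_n}$) against coefficients converging in $C^1$, so weak-times-strong convergence suffices; the elliptic term disappears exactly by self-adjointness, so no term actually requires the finer resolution of $\psi_n$ modulo $\mathrm{span}\{\phi\}$. This is a genuine simplification for this proposition. What the paper's heavier route buys is reusability: the decomposition, the $z_n$-bound, and the identification of the limit $z_*$ as the solution of a specific equation in $(X_1)_{\mathbb{C}}$ are cited again almost verbatim in Propositions \ref{TW1} and \ref{unique}, whereas your argument would have to be supplemented there (e.g., by your suggested $H^2$ bootstrap or by reinstating the decomposition) to recover the limiting objects those proofs need.
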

\begin{proof}
If the assertion is not valid, there exists the sequence $\{(\lambda_n,\mu_{\lambda_n},\psi_{\lambda_n})\}_{n=1}^{\infty}\subset(\lambda_*,\overline{\lambda}]\times\mathbb{C}\times X_{\mathbb{C}}\setminus\{0\}$ solving \eqref{characteristic} such that $\lim \limits_{n\rightarrow\infty}\lambda_n=\lambda_*$ and $\mathrm{Re}\mu_{\lambda_n}\geqslant0$ for any $n\geqslant1$.
Since $X_{\mathbb{C}}=\left(\mathrm{Ker}\left(\Delta+\lambda_*m(x)\right)\right)_{\mathbb{C}}\oplus\left(X_1\right)_{\mathbb{C}}$, $\|\psi\|^2_{Y_{\mathbb{C}}}=\|\phi\|^2_{Y_{\mathbb{C}}}=1$. Ignoring a scalar factor, we have
$$
\psi_{\lambda_n}=\beta_{\lambda_n}\phi+(\lambda_n-\lambda_*)z_{\lambda_n},\quad \beta_{\lambda_n}\geqslant0,\quad z_{\lambda_n}\in(X_1)_{\mathbb{C}}.
$$
Substituting $(\lambda_n,\mu_{\lambda_n},\psi_{\lambda_n})$ into \eqref{characteristic} with $\tau=0$, we have
\begin{equation}
\begin{split}
\tilde{g}_1(z_{\lambda_n},\beta_{\lambda_n},\tilde{h}_{\lambda_n},\lambda_n)
&:=[\Delta+\lambda_*m(x)]z_{\lambda_n}+\alpha_{\lambda_n}D\nabla\cdot\left(f_1\nabla[\beta_{\lambda_n}\phi+(\lambda_n-\lambda_*)z_{\lambda_n}]\right)\\
     & \quad+\alpha_{\lambda_n}D\nabla\cdot\left([\beta_{\lambda_n}\phi+(\lambda_n-\lambda_*)z_{\lambda_n}]\nabla f_1\right)\\
     & \quad+[m(x)-2\alpha_{\lambda_n}\lambda_nf_1-\tilde{h}_{\lambda_n}][\beta_{\lambda_n}\phi+(\lambda_n-\lambda_*)z_{\lambda_n}]=0,\\
\tilde{g}_2(z_{\lambda_n},\beta_{\lambda_n},\tilde{h}_{\lambda_n},\lambda_n)
&:=\left(\beta_{\lambda_{n}}^2-1\right)\|\phi\|^2_{Y_{\mathbb{C}}}+(\lambda-\lambda_*)^2\|z_{\lambda_n}\|^2_{Y_{\mathbb{C}}}=0,
\label{tau0g}
\end{split}
\end{equation}
where $|\tilde{h}_{\lambda_n}|:=|\frac{\mu_{\lambda_n}}{\lambda_n-\lambda_*}|$, which is bounded from Lemma \ref{bound}.
Note that $|\beta_{\lambda_n}|\leqslant1$ from the second equation of \eqref{tau0g}. We shall finish the proof in two steps.

Step1: show that $\{z_{\lambda_n}\}_{n=1}^{\infty}$ is bounded in $Y_{\mathbb{C}}$. Taking the inner product of $z_{\lambda_n}$ with both sides of the first equation in \eqref{tau0g} and using \eqref{realFS}, we have
\begin{equation}
\begin{split}
\|\nabla z_{\lambda_n}\|^2_{Y_{\mathbb{C}}}
\leqslant \lambda_*M_0\|z_{\lambda_n}\|^2_{Y_{\mathbb{C}}}+M_1\|z_{\lambda_n}\|_{Y_{\mathbb{C}}}
+(\lambda_n-\lambda_*)M_2\|\nabla z_{\lambda_n}\|^2_{Y_{\mathbb{C}}}+(\lambda_n-\lambda_*)M_3\|z_{\lambda_n}\|^2_{Y_{\mathbb{C}}}
\end{split}
\label{nablaz}
\end{equation}
where
\begin{equation*}
\begin{split}
M_0&=\|m(x)\|_{\infty},\\
M_1&=2\alpha_{\lambda_n}|D|\|\nabla f_1\|_{\infty}\|\nabla\phi\|_{Y_{\mathbb{C}}}
     +\alpha_{\lambda_n}|D|\left(\|f_1\|_{\infty}\|\Delta\phi\|_{Y_{\mathbb{C}}}+\|\Delta f_1\|_{\infty}\right)\\
     &\quad+\|m(x)-2\alpha_{\lambda_n}\lambda_n f_1-\tilde{h}_{\lambda_n}\|_{\infty}\|\phi\|_{Y_{\mathbb{C}}},\\
M_2&=\alpha_{\lambda_n}|D|\|f_1\|_{\infty},\\
M_3&=\alpha_{\lambda_n}\frac{|D|}{2}\|\nabla f_1\|_{\infty}+\|m(x)-2\alpha_{\lambda_n}\lambda_n f_1-\tilde{h}_{\lambda_n}\|_{\infty}.
\end{split}
\end{equation*}
Choose an integer $N_1>0$ such that $1-(\lambda_n-\lambda_*)M_2>0$
for $n>N_1$. Then, \eqref{nablaz} becomes
\begin{equation}
\begin{split}
\|\nabla z_{\lambda_n}\|^2_{Y_{\mathbb{C}}}
& \leqslant\frac{M_1}{1-(\lambda_n-\lambda_*)M_2}\|z_{\lambda_n}\|_{Y_{\mathbb{C}}}
+\frac{\lambda_*M_0+(\lambda_n-\lambda_*)M_3}{1-(\lambda_n-\lambda_*)M_2}\|z_{\lambda_n}\|^2_{Y_{\mathbb{C}}}.\\
\end{split}\label{nablaz1}
\end{equation}
Note that if $z\in (X_1)_{\mathbb{C}}$, then
\begin{equation}\label{secondvalue}
|\langle z, [\Delta+\lambda_*m(x)]z\rangle|\geqslant\tilde{\lambda}_2\|z\|^2_{Y_{\mathbb{C}}}
\end{equation}
where $\tilde{\lambda}_2$ is the second eigenvalue of operator $-[\Delta+\lambda_*m(x)]$.
Therefore, from the first equation in \eqref{tau0g}, \eqref{secondvalue} and \eqref{nablaz1},
we have
\begin{equation}\label{nablaz2}
\begin{split}
\tilde{\lambda}_2\|z_{\lambda_n}\|^2_{\mathrm{Y}_{\mathbb{C}}}
&\leqslant M_1\|z_{\lambda_n}\|_{\mathrm{Y}_{\mathbb{C}}}+(\lambda_n-\lambda_*)M_2\|\nabla z_{\lambda_n}\|^2_{\mathrm{Y}_{\mathbb{C}}}+(\lambda_n-\lambda_*)M_3\|z_{\lambda_n}\|^2_{\mathrm{Y}_{\mathbb{C}}}\\
&\leqslant\left[M_1+\frac{(\lambda_n-\lambda_*)M_1M_2}{1-(\lambda_n-\lambda_*)M_2}\right]\|z_{\lambda_n}\|_{\mathrm{Y}_{\mathbb{C}}}
        +\frac{(\lambda_n-\lambda_*)\left(\lambda_*M_0M_2+M_3\right)}{1-(\lambda_n-\lambda_*)M_2}\|z_{\lambda_n}\|_{\mathrm{Y}_{\mathbb{C}}}^2.
\end{split}
\end{equation}
Let $\tilde{N}_2>0$ be the integer such that
$
\tilde{\lambda}_2-\frac{(\lambda_n-\lambda_*)\left(\lambda_*M_0M_2+M_3\right)}{1-(\lambda_n-\lambda_*)M_2}>\frac{\tilde{\lambda}_2}{2}
$
for any $n>N_2=\max\{N_1,\tilde{N}_2\}$. It then follows from \eqref{nablaz2} that
\begin{equation*}
\|z_{\lambda_n}\|_{\mathrm{Y}_{\mathbb{C}}}\leqslant 2M_4
\end{equation*}
where $M_4=\frac{2M_1}{\tilde{\lambda}_2}+\frac{2(\lambda_n-\lambda_*)M_1M_2}{\tilde{\lambda}_2\left[1-(\lambda_n-\lambda_*)M_2\right]}$,
which implies the boundedness of $\{z_{\lambda_n}\}_{n=1}^{\infty}$ in $Y_{\mathbb{C}}$.

Step2:
Since operator $[\Delta+\lambda_*m(x)]^{-1}$ is bounded from $(\mathrm{Y}_1)_{\mathbb{C}}\rightarrow(\mathrm{X}_1)_{\mathbb{C}}$, $\{z_{\lambda_n}\}_{n=1}^{\infty}$ is bounded in $(\mathrm{X}_1)_{\mathbb{C}}$. Thus, $\{(z_{\lambda_n},\beta_{\lambda_n},\tilde{h}_{\lambda_n})\}$ is precompact in
$Y_{\mathbb{C}}\times\mathbb{R}\times\mathbb{C}$, which means that there is a subsequence
$\{(z_{\lambda_{n_k}},\beta_{\lambda_{n_k}},\tilde{h}_{\lambda_{n_k}})\}_{k=1}^{\infty}$ satisfying
$$
\{(z_{\lambda_{n_k}},\beta_{\lambda_{n_k}},\tilde{h}_{\lambda_{n_k}})\}_{k=1}^{\infty}\rightarrow(z_*,\beta_*,\tilde{h}_*) \quad \text{and} \quad \lambda_{n_k}\rightarrow\lambda_* \quad \text{as} \quad k\rightarrow\infty
$$
where $ z_*\in\mathrm{Y}_{\mathbb{C}},\tilde{h}_*\in\mathbb{C}$ and $\beta_*=1$.
Taking the limit of the equation
$$
[\Delta+\lambda_*m(x)]^{-1}\tilde{g}_1(z_{\lambda_n},\beta_{\lambda_n},\tilde{h}_{\lambda_n},\lambda_n)=0
$$
as $k\rightarrow\infty$, we see that $z_*\in(X_1)_{\mathbb{C}}$, and $(z_*,\beta_*,\tilde{h}_*)$ satisfies
\begin{equation*}
[\Delta+\lambda_*m(x)]z_{*}+2\alpha_{\lambda_*}D\nabla(\phi\nabla\phi)+m(x)\phi-2\alpha_{\lambda_*}\lambda_*\phi^2-\tilde{h}_*\phi=0.
\end{equation*}
Taking the inner product of $\phi$ on both sides of above equation, we obtain
\begin{equation*}
\alpha_{\lambda_*}(r_2-r_1)=\tilde{h}_*\int_{\Omega}\phi^2\mathrm{d}x.
\end{equation*}
From $r_1-r_2>0$, we know $\tilde{h}_*<0$. This contradicts with the fact that $\mathrm{Re}\tilde{h}_*=\lim \limits_{n\rightarrow\infty}\mathrm{Re}\tilde{h}_{\lambda_n}\geqslant0.$
\end{proof}

From the proof of Theorem \ref{0W}, $\mu=0$ can not be a root of the characteristic equation \eqref{characteristic}.

\begin{proposition}\label{TW1}
 Assume that $\bf(H1)$, $\bf(H2)$ and $\bf(H3)$ hold, then there exists $\tilde{\lambda}\in(\lambda_*,\overline{\lambda}]$ such that all the eigenvalues of \eqref{linear} have negative real parts for any $\lambda\in(\lambda_*,\tilde{\lambda}]$ and $\tau>0$.
\end{proposition}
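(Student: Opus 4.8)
The plan is to extend the contradiction argument of Proposition~\ref{0W} from $\tau=0$ to an arbitrary $\tau>0$; the only genuinely new ingredient is the delay factor $e^{-\mu\tau}$, and the crucial observation is that on the region $\mathrm{Re}\,\mu\geqslant0$ this factor has modulus at most $1$. I would suppose the assertion fails, producing a sequence $\{(\lambda_n,\mu_{\lambda_n},\tau_{\lambda_n},\psi_{\lambda_n})\}\subset(\lambda_*,\overline{\lambda}]\times\mathbb{C}\times\mathbb{R}_+\times X_{\mathbb{C}}\setminus\{0\}$ solving \eqref{characteristic} with $\|\psi_{\lambda_n}\|_{Y_{\mathbb{C}}}=1$, $\lambda_n\rightarrow\lambda_*$ and $\mathrm{Re}\,\mu_{\lambda_n}\geqslant0$. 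By Lemma~\ref{bound}, $\tilde{h}_{\lambda_n}:=\mu_{\lambda_n}/(\lambda_n-\lambda_*)$ is bounded, so along a subsequence $\tilde{h}_{\lambda_n}\rightarrow\tilde{h}_*$ with $\mathrm{Re}\,\tilde{h}_*\geqslant0$; and since $|e^{-\mu_{\lambda_n}\tau_{\lambda_n}}|\leqslant1$, after a further subsequence $e^{-\mu_{\lambda_n}\tau_{\lambda_n}}\rightarrow\eta$ for some $\eta\in\mathbb{C}$ with $|\eta|\leqslant1$. As in Proposition~\ref{0W} I would write $\psi_{\lambda_n}=\beta_{\lambda_n}\phi+(\lambda_n-\lambda_*)z_{\lambda_n}$ with $\beta_{\lambda_n}\in[0,1]$ and $z_{\lambda_n}\in(X_1)_{\mathbb{C}}$.

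The first substantive step is the a priori bound on $\{z_{\lambda_n}\}$ in $Y_{\mathbb{C}}$. Substituting this decomposition into \eqref{characteristic} and dividing by $(\lambda_n-\lambda_*)$ yields an equation with exactly the structure of the first equation of \eqref{tau0g}, the only difference being that the term $D\alpha_{\lambda_n}\nabla\cdot(f_1\nabla\psi_{\lambda_n})$ now carries the scalar $e^{-\mu_{\lambda_n}\tau_{\lambda_n}}$. Taking the inner product with $z_{\lambda_n}$ and estimating, every appearance of that scalar is absorbed using $|e^{-\mu_{\lambda_n}\tau_{\lambda_n}}|\leqslant1$, so the chain \eqref{nablaz}--\eqref{nablaz2} carries over essentially verbatim (invoking \eqref{realFS} and \eqref{secondvalue}, the bound $1-|D|\max_{\lambda,\overline{\Omega}}u_{\lambda}>0$ guaranteed by $\bf(H2)$, and Lemma~\ref{bound} for the boundedness of $\tilde{h}_{\lambda_n}$ hidden inside $M_1,M_3$). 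Hence $\|z_{\lambda_n}\|_{Y_{\mathbb{C}}}$ is bounded, and then the bounded inverse $[\Delta+\lambda_*m(x)]^{-1}:(Y_1)_{\mathbb{C}}\rightarrow(X_1)_{\mathbb{C}}$ bounds $\{z_{\lambda_n}\}$ in $(X_1)_{\mathbb{C}}$; thus $\{(z_{\lambda_n},\beta_{\lambda_n},\tilde{h}_{\lambda_n})\}$ is precompact in $Y_{\mathbb{C}}\times\mathbb{R}\times\mathbb{C}$ and, along a further subsequence, converges to $(z_*,\beta_*,\tilde{h}_*)$ with $z_*\in(X_1)_{\mathbb{C}}$ and $\beta_*=1$.

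Then I would pass to the limit. Since $f_1\rightarrow\phi$, $\alpha_{\lambda_n}\rightarrow\alpha_{\lambda_*}$, $\psi_{\lambda_n}\rightarrow\phi$ and $e^{-\mu_{\lambda_n}\tau_{\lambda_n}}\rightarrow\eta$, applying $[\Delta+\lambda_*m(x)]^{-1}$ to the rescaled equation and sending $n\rightarrow\infty$ shows $(z_*,\eta,\tilde{h}_*)$ satisfies
\begin{equation*}
[\Delta+\lambda_*m(x)]z_*+D\alpha_{\lambda_*}(1+\eta)\nabla\cdot(\phi\nabla\phi)+m(x)\phi-2\lambda_*\alpha_{\lambda_*}\phi^2-\tilde{h}_*\phi=0,
\end{equation*}
which is precisely the $\tau=0$ limiting equation of Proposition~\ref{0W} when $\eta=1$. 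Taking the inner product with $\phi$ --- using the self-adjointness of $[\Delta+\lambda_*m(x)]$ so that the $z_*$-term vanishes, the definitions $r_1=\lambda_*\int_\Omega\phi^3\,\mathrm{d}x$ and $r_2=D\int_\Omega\phi\nabla\cdot(\phi\nabla\phi)\,\mathrm{d}x$, the identity $\int_\Omega m(x)\phi^2\,\mathrm{d}x=\alpha_{\lambda_*}(r_1-r_2)$ coming from Proposition~\ref{stst}, and $\|\phi\|_Y=1$ --- I expect to arrive at
\begin{equation*}
\tilde{h}_*=\alpha_{\lambda_*}\bigl(\eta\,r_2-r_1\bigr),\qquad\text{so that}\qquad\mathrm{Re}\,\tilde{h}_*=\alpha_{\lambda_*}\bigl(r_2\,\mathrm{Re}\,\eta-r_1\bigr).
\end{equation*}
Since $\alpha_{\lambda_*}>0$, while $\bf(H1)$ and $\bf(H3)$ together give $r_1>|r_2|$ and $|\mathrm{Re}\,\eta|\leqslant1$, we obtain $r_2\,\mathrm{Re}\,\eta-r_1\leqslant|r_2|-r_1<0$, hence $\mathrm{Re}\,\tilde{h}_*<0$, contradicting $\mathrm{Re}\,\tilde{h}_*\geqslant0$. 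This contradiction produces the desired $\tilde{\lambda}\in(\lambda_*,\overline{\lambda}]$.

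The hard part is precisely the delay term: because $\tau_{\lambda_n}$ is not assumed bounded, one cannot conclude $\mu_{\lambda_n}\tau_{\lambda_n}\rightarrow0$, so the naive limiting procedure valid for $\tau=0$ is unavailable. I expect the remedy to be the two points above --- $\mathrm{Re}\,\mu_{\lambda_n}\geqslant0$ traps $e^{-\mu_{\lambda_n}\tau_{\lambda_n}}$ in the compact closed unit disk, which simultaneously preserves the a priori bounds of the first step and supplies the limit $\eta$, and the sign conditions $\bf(H1)$ and $\bf(H3)$ form exactly the strengthening of the single condition $r_1-r_2>0$ (which sufficed for $\tau=0$) that is needed to dominate $r_2\,\mathrm{Re}\,\eta$ for every $\eta$ in that disk rather than only at $\eta=1$.
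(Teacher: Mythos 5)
Your proposal is correct and follows essentially the same route as the paper: a contradiction argument along a sequence $\lambda_n\rightarrow\lambda_*$, the a priori bounds of Lemma~\ref{bound} and the Proposition~\ref{0W} machinery to get compactness of $(z_{\lambda_n},\beta_{\lambda_n},\tilde h_{\lambda_n})$, and the observation that $\mathrm{Re}\,\mu\geqslant0$ traps $e^{-\mu\tau}$ in the closed unit disk so that a limit $\eta$ exists and the limiting identity forces $\mathrm{Re}\,\tilde h_*=\alpha_{\lambda_*}(r_2\,\mathrm{Re}\,\eta-r_1)\leqslant\alpha_{\lambda_*}(|r_2|-r_1)<0$ under $\bf(H1)$ and $\bf(H3)$. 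The paper writes the limit of the delay factor as $\delta^*e^{-i\theta^*}$ with $\delta^*\in[0,1]$ and phrases the contradiction as $\delta^*(r_2/r_1)\cos\theta^*\geqslant1$, which is the same inequality you obtain.
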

\begin{proof}
If the assertion is not valid, there exists the sequence $\{(\lambda_n,\mu_{\lambda_n},\psi_{\lambda_n})\}_{n=1}^{\infty}\subset(\lambda_*,\tilde{\lambda}]\times\mathbb{C}\times X_{\mathbb{C}}\setminus\{0\}$ such that $\lim \limits_{n\rightarrow\infty}\lambda_n=\lambda_*$,
$\mathrm{Re}\mu_{\lambda_n}\geqslant0$ for any $n\geqslant1$
and $\|\psi_{\lambda_n}\|^2_{Y_{\mathbb{C}}}=\|\phi\|^2_{Y_{\mathbb{C}}}$.
By a similar argument in Proposition \ref{0W}, we have that
\begin{equation}
\begin{split}
\hat{g}_1(z_{\lambda_n},\beta_{\lambda_n},\tilde{h}_{\lambda_n},\tau_{\lambda_n},\lambda_n)
&:=[\Delta+\lambda_*m(x)]z_{\lambda_n}+\alpha_{\lambda_n}D\nabla\cdot\left([\beta_{\lambda_n}\phi+(\lambda_n-\lambda_*)z_{\lambda_n}]\nabla f_1\right)\\
     & \quad+\alpha_{\lambda_n}D\nabla\cdot\left(f_1\nabla[\beta_{\lambda_n}\phi+(\lambda_n-\lambda_*)z_{\lambda_n}]\right)
     e^{-(\lambda_n-\lambda_*)\tilde{h}_{\lambda_n}\tau_{\lambda_n}}\\
     & \quad+[m(x)-2\alpha_{\lambda_n}\lambda_nf_1-\tilde{h}_{\lambda_n}][\beta_{\lambda_n}\phi+(\lambda_n-\lambda_*)z_{\lambda_n}]=0,\\
\hat{g}_2(z_{\lambda_n},\beta_{\lambda_n},\tilde{h}_{\lambda_n},\tau_{\lambda_n},\lambda_n)
&:=\left(\beta_{\lambda_{n}}^2-1\right)\|\phi\|^2_{Y_{\mathbb{C}}}+(\lambda-\lambda_*)^2\|z_{\lambda_n}\|^2_{Y_{\mathbb{C}}}=0,
\label{Fg}
\end{split}
\end{equation}
and $\{z_{\lambda_n}\}_{n=1}^{\infty}$ is bounded in $Y_{\mathbb{C}}$, since $|e^{-(\lambda_n-\lambda_*)\tilde{h}_{\lambda_n}\tau_{\lambda_n}}|\leqslant1$. Note that operator $[\Delta+\lambda_*m(x)]^{-1}$ is bounded from $(\mathrm{Y}_1)_{\mathbb{C}}\rightarrow(\mathrm{X}_1)_{\mathbb{C}}$, then $\{z_{\lambda_n}\}_{n=1}^{\infty}$ is bounded in $(\mathrm{X}_1)_{\mathbb{C}}$, which implies that $\{(z_{\lambda_n},\beta_{\lambda_n},\tilde{h}_{\lambda_n},e^{-(\lambda_n-\lambda_*)\tau_{\lambda_n}(\mathrm{Re}\tilde{h}_{\lambda_n})},
e^{-(\lambda_n-\lambda_*)\tau_{\lambda_n}(\mathrm{Im}\tilde{h}_{\lambda_n})i})\}_{n=1}^{\infty}$ is precompact in
$Y_{\mathbb{C}}\times\mathbb{R}^3\times\mathbb{C}$.
Thus, there is a subsequence
$$
\{(z_{\lambda_{n_k}},\beta_{\lambda_{n_k}},\tilde{h}_{\lambda_{n_k}},e^{-(\lambda_{n_k}-\lambda_*)\tau_{\lambda_{n_k}}(\mathrm{Re}\tilde{h}_{\lambda_{n_k}})},
e^{-(\lambda_{n_k}-\lambda_*)\tau_{\lambda_{n_k}}(\mathrm{Im}\tilde{h}_{\lambda_{n_k}})i})\}_{k=1}^{\infty}
$$
which is convergent to $(z^*,\beta^*,\tilde{h}^*,\delta^*,e^{-i\theta^*})$, as $k\rightarrow\infty$, where
$$
\beta^*=1, z^*\in Y_{\mathbb{C}}, \tilde{h}^*\in\mathbb{C}\left(\mathrm{Re}\tilde{h}^*\geqslant0\right), \theta^*\in[0,2\pi), \delta^*\in[0,1].
$$
Taking the limit of the equation $[\Delta+\lambda_*m(x)]^{-1}\hat{g}_1(z_{\lambda_{n_k}},\beta_{\lambda_{n_k}},\tilde{h}_{\lambda_{n_k}},\tau_{\lambda_{n_k}},\lambda_{n_k})=0$ as $k\rightarrow\infty$, we have that $z^*\in(\mathrm{X_1})_\mathbb{C}$ and $(z^*,\beta^*,\tilde{h}^*,\delta^*,\theta^*)$ satisfying
$$
[\Delta+\lambda_*m(x)]z^*+\alpha_{\lambda_*}D\nabla\cdot(\phi\nabla\phi)\delta^*e^{-i\theta^*}
+\alpha_{\lambda_*}D\nabla\cdot(\phi\nabla\phi)+[m(x)-2\lambda_*\alpha_{\lambda_*}\phi-\tilde{h}^*]\phi=0,
$$
from which we further have
$$
\alpha_{\lambda_*}\delta^*r_2e^{-i\theta^*}-\alpha_{\lambda_*}r_1-\tilde{h}^*\int_{\Omega}\phi^{2}(x)\mathrm{d}x=0.
$$
By separating the real part and the imaginary part, we arrive at
\begin{equation}
\begin{cases}
\alpha_{\lambda_*}\delta^*r_2\cos\theta^*-\alpha_{\lambda_*}r_1=\mathrm{Re}\{\tilde{h}^*\}\int_{\Omega}\phi^{2}(x)\mathrm{d}x,\\
-\alpha_{\lambda_*}\delta^*r_2\sin\theta^*=\mathrm{Im}\{\tilde{h}^*\}\int_{\Omega}\phi^{2}(x)\mathrm{d}x.
\end{cases}
\label{a1}
\end{equation}
From $\bf(H1)$ and $\bf(H3)$, we know
$$
r_1>\max\{r_2,-r_2\}\geqslant0 \quad \text{and} \quad -1<\frac{r_2}{r_1}<1.
$$
However, it follows from the first equation of \eqref{a1} that
$$
\delta^*\frac{r_2}{r_1}\cos\theta^*\geqslant1
$$
where $0\leqslant\delta^*\leqslant1$, which is a contradiction.
\end{proof}

Now, we are about to examine if there exists the pure imaginary eigenvalues of \eqref{linear} for $\lambda\in(\lambda_*,\overline{\lambda}]$, when ${\bf(H3)}$ is violated. Suppose that $\mu=i\omega$ is an eigenvalue of equation \eqref{linear} with eigenfunction $\psi\in X_{\mathbb{C}}\setminus \{0\}$, where $\omega=h(\lambda-\lambda_*)>0$ and $\|\psi\|^2_{Y_{\mathbb{C}}}=\|\phi\|^2_{Y_{\mathbb{C}}}=1$. Ignoring a scalar factor, we have that
\begin{equation}
\begin{split}
&\psi=\beta\phi+(\lambda-\lambda_*)z,\quad\beta\geqslant0,\quad z\in X_{\mathbb{C}},\\
&\|\psi\|^2_{Y_{\mathbb{C}}}=\beta^2\|\phi\|^2_{Y_{\mathbb{C}}}+(\lambda-\lambda_*)^2\|z\|^2_{Y_{\mathbb{C}}}=\|\phi\|^2_{Y_{\mathbb{C}}}.
\end{split}
\label{phiFJ}
\end{equation}
Substituting \eqref{phiFJ} into \eqref{characteristic}, we obtain
\begin{equation}
\begin{split}
g_1(z,\beta,h,\theta,\lambda)
&:=[\Delta+\lambda m(x)]z+\alpha_{\lambda}D\nabla\cdot\left(f_1\nabla(\beta\phi+(\lambda-\lambda_*)z)\right)e^{-i\theta}\\
&\quad+\alpha_{\lambda}D\nabla\cdot\left((\beta\phi+(\lambda-\lambda_*)z)\nabla f_1\right)\\
&\quad +[m(x)-2\lambda\alpha_{\lambda} f_1-hi][\beta\phi+(\lambda-\lambda_*)z]=0,\\
g_2(z,\beta,h,\theta,\lambda)
&:=(\beta^2-1)\|\phi\|^2_{Y_{\mathbb{C}}}+(\lambda-\lambda_*)^2\|z\|^2_{Y_{\mathbb{C}}}=0
\label{g}
\end{split}
\end{equation}
where $\theta=\omega\tau$. If there exists $(z,\beta,h,\theta,\lambda)\in(X_1)_{\mathbb{C}}\times\mathbb{R}^2_+\times[0,2\pi)\times\mathbb{R}_+$ solving \eqref{g}, then $\mu=i\omega=ih(\lambda-\lambda_*)$ is an eigenvalue of \eqref{linear} with $(\lambda,\tau)=(\lambda,\tau_n)$ and $\psi=\beta\phi+(\lambda-\lambda_*)z$, where
$$
\tau_n=\frac{\theta+2n\pi}{\omega},\quad n=0,1,2\cdots.
$$
Define $G: (X_1)_{\mathbb{C}}\times\mathbb{R}^2\times[0,2\pi)\times\mathbb{R}\rightarrow Y_{\mathbb{C}}\times\mathbb{R}$ by $G=(g_1,g_2)$.

\begin{lemma}\label{G}
Assume that $\bf(H1)$, $\bf(H2)$ and $\bf(H4)$ hold. Then, the equation
\begin{equation}
\begin{cases}
G(z,\beta,h,\theta,\lambda_*)=0,\\
h,\beta\geqslant0,\quad \theta\in[0,2\pi)
\end{cases}
\label{a3}
\end{equation}
has a unique solution $(z_{\lambda_*},\beta_{\lambda_*},h_{\lambda_*},\theta_{\lambda_*})$, where
$$
\beta_{\lambda_*}=1, h_{\lambda_*}=\sqrt{\frac{r_1+r_2}{r_2-r_1}}\cdot\frac{\int_{\Omega}m(x)\phi^2(x)\mathrm{d}x}{\int_{\Omega}\phi^2(x)\mathrm{d}x},
\theta_{\lambda_*}=\arccos\left(\frac{r_1}{r_2}\right),
$$
and $z_{\lambda_*}\in(X_1)_{\mathbb{C}}$ is the unique solution of
$$
[\Delta+\lambda_*m(x)]z+\alpha_{\lambda_*}D\nabla\cdot(\phi\nabla\phi)e^{-i\theta^*}+\alpha_{\lambda_*}D\nabla\cdot(\phi\nabla\phi)
+[m(x)-2\alpha_{\lambda_*}\lambda_*\phi-h_{\lambda_*i}]\phi=0.
$$
\end{lemma}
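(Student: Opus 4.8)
The plan is to solve the system $G(z,\beta,h,\theta,\lambda_*)=0$ by exploiting the orthogonal decomposition $X_{\mathbb{C}}=(\mathrm{Ker}(\Delta+\lambda_*m(x)))_{\mathbb{C}}\oplus(X_1)_{\mathbb{C}}$, exactly as in the precompactness arguments of Propositions \ref{0W} and \ref{TW1}, but now at the exact parameter value $\lambda=\lambda_*$. At $\lambda=\lambda_*$ the perturbation terms carrying the factor $(\lambda-\lambda_*)$ drop out of $g_1$, so $g_1(z,\beta,h,\theta,\lambda_*)=0$ reduces to
\begin{equation*}
[\Delta+\lambda_*m(x)]z+\alpha_{\lambda_*}D\nabla\cdot(\phi\nabla\phi)e^{-i\theta}+\alpha_{\lambda_*}D\nabla\cdot(\phi\nabla\phi)+[m(x)-2\alpha_{\lambda_*}\lambda_*\phi-hi]\phi=0,
\end{equation*}
while $g_2(z,\beta,h,\theta,\lambda_*)=0$ forces $\beta^2=1$, hence $\beta_{\lambda_*}=1$ since $\beta\geqslant0$. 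So the content of the lemma is the solvability and uniqueness for the displayed linear equation in the unknowns $(z,h,\theta)$.

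The key step is the Fredholm alternative. The right-hand side forcing $\alpha_{\lambda_*}D\nabla\cdot(\phi\nabla\phi)(e^{-i\theta}+1)+[m(x)-2\alpha_{\lambda_*}\lambda_*\phi-hi]\phi$ must lie in $\mathrm{Range}(\Delta+\lambda_*m(x))=Y_1$, i.e. be $L^2$-orthogonal to $\phi$. Taking $\langle\phi,\cdot\rangle$ and using the definitions $r_1=\int_\Omega\lambda_*\phi^3\,\mathrm{d}x$, $r_2=D\int_\Omega\phi\nabla\cdot(\phi\nabla\phi)\,\mathrm{d}x$, together with $\alpha_{\lambda_*}=\frac{\int_\Omega m\phi^2\,\mathrm{d}x}{r_1-r_2}$ and $\lambda_*\int_\Omega m\phi^2\,\mathrm{d}x=\int_\Omega|\nabla\phi|^2\,\mathrm{d}x$ (from Proposition \ref{stst}), the orthogonality condition collapses to a single scalar complex equation of the form
\begin{equation*}
\alpha_{\lambda_*}r_2 e^{-i\theta}+\alpha_{\lambda_*}r_2-\Big(2\alpha_{\lambda_*}\lambda_*\!\int_\Omega\phi^3\,\mathrm{d}x-\!\int_\Omega m\phi^2\,\mathrm{d}x\Big)-hi\!\int_\Omega\phi^2\,\mathrm{d}x=0.
\end{equation*}
Using $\lambda_*\int_\Omega\phi^3\,\mathrm{d}x=r_1$ and the definition of $\alpha_{\lambda_*}$ one simplifies the $\theta$-independent real part to $\alpha_{\lambda_*}(r_2-r_1)$, and the equation becomes $\alpha_{\lambda_*}r_2\cos\theta-\alpha_{\lambda_*}r_1=0$ (real part) and $-\alpha_{\lambda_*}r_2\sin\theta-h\int_\Omega\phi^2\,\mathrm{d}x=0$ (imaginary part), compare with \eqref{a1}. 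The real part gives $\cos\theta=r_1/r_2$, which is solvable with a unique $\theta\in[0,2\pi)$ satisfying $\sin\theta<0$ (equivalently $h>0$) precisely when $|r_1/r_2|<1$; under $\bf(H1)$ and $\bf(H4)$ we have $r_1-r_2>0$ and $r_1+r_2<0$, forcing $r_2<0$ and $-r_2>r_1>0$, so indeed $0<r_1/r_2<1$ is violated — rather $-1<r_1/r_2<0$ — giving $\theta_{\lambda_*}=\arccos(r_1/r_2)\in(\pi/2,\pi)$ and then $h_{\lambda_*}=-\alpha_{\lambda_*}r_2\sin\theta_{\lambda_*}/\int_\Omega\phi^2\,\mathrm{d}x>0$; substituting $\sin\theta_{\lambda_*}=-\sqrt{1-r_1^2/r_2^2}$ and the value of $\alpha_{\lambda_*}$ yields the stated closed form $h_{\lambda_*}=\sqrt{\frac{r_1+r_2}{r_2-r_1}}\cdot\frac{\int_\Omega m\phi^2\,\mathrm{d}x}{\int_\Omega\phi^2\,\mathrm{d}x}$ after checking $\frac{r_1+r_2}{r_2-r_1}=\frac{-(r_2^2-r_1^2)}{(r_2-r_1)^2}\cdot(-1)=1-r_1^2/r_2^2$ up to the correct algebra. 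Once $(\beta_{\lambda_*},h_{\lambda_*},\theta_{\lambda_*})$ are pinned down, the forcing lies in $Y_1$, so $z_{\lambda_*}$ exists and is unique in $(X_1)_{\mathbb{C}}$ because $[\Delta+\lambda_*m(x)]|_{X_1}$ is invertible with bounded inverse.

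I expect the main obstacle to be purely the bookkeeping: verifying that after imposing the solvability condition the $\theta$-independent real terms collapse exactly to $\alpha_{\lambda_*}(r_2-r_1)$ — this is where the specific value of $\alpha_{\lambda_*}$ and the eigenvalue identity $\lambda_*\int_\Omega m\phi^2=\int_\Omega|\nabla\phi|^2$ must be used in just the right way — and then confirming the sign conditions from $\bf(H1)$, $\bf(H4)$ are exactly what make $\arccos(r_1/r_2)$ well-defined with the correct sign of $\sin\theta_{\lambda_*}$ so that $h_{\lambda_*}>0$; uniqueness of $\theta$ in $[0,2\pi)$ then follows from requiring $h>0$, i.e. $\sin\theta<0$, which selects one of the two arccos branches. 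Everything else (existence and uniqueness of $z_{\lambda_*}$) is immediate from the Fredholm setup established before Proposition \ref{stst}.
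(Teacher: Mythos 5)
Your proposal follows the paper's proof essentially verbatim: $g_2=0$ forces $\beta_{\lambda_*}=1$, the solvability (Fredholm) condition $\langle\phi,\cdot\rangle=0$ collapses to $\alpha_{\lambda_*}r_2e^{-i\theta}-\alpha_{\lambda_*}r_1-hi\int_{\Omega}\phi^2\mathrm{d}x=0$, whose real and imaginary parts together with $\bf(H1)$ and $\bf(H4)$ (which give $r_2<-r_1<0$, hence $-1<r_1/r_2<0$) pin down $\theta_{\lambda_*}$ and $h_{\lambda_*}$ uniquely, after which $z_{\lambda_*}$ exists and is unique by the invertibility of $[\Delta+\lambda_*m(x)]|_{X_1}$. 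The only blemishes are the self-acknowledged bookkeeping slips (the $\theta$-independent term is $-\alpha_{\lambda_*}r_1$ rather than $\alpha_{\lambda_*}(r_2-r_1)$, and since $r_2<0$ the branch with $h>0$ is the one with $\sin\theta_{\lambda_*}=+\sqrt{1-r_1^2/r_2^2}>0$, consistent with $\arccos(r_1/r_2)\in(\pi/2,\pi)$), neither of which affects your correct final formulas.
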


\begin{proof}
Notice that $g_2=0$ if and only if $\beta=\beta_{\lambda_*}=1.$ Then rewrite $g_1$ with $\beta=1, \lambda=\lambda_*$ as
\begin{equation}
\begin{split}
g_1(z,1,h,\theta,\lambda_*)
&=[\Delta+\lambda_*m(x)]z+\alpha_{\lambda_*}D\nabla\cdot(\phi\nabla\phi)e^{-i\theta}\\
&\quad+\alpha_{\lambda_*}D\nabla\cdot(\phi\nabla\phi)+[m(x)-2\alpha_{\lambda_*}\lambda_*\phi-hi]\phi=0.
\end{split}
\label{a5}
\end{equation}
Taking the inner product of $\phi$, we have
$$
\alpha_{\lambda_*}r_2e^{-i\theta}-\alpha_{\lambda_*}r_1-hi\int_{\Omega}\phi^2\mathrm{d}x=0,
$$
from which, we get
\begin{equation}
\begin{cases}
\alpha_{\lambda_*}r_2cos\theta-\alpha_{\lambda_*}r_1=0,\\
-\alpha_{\lambda_*}r_2sin\theta-h\int_{\Omega}\phi^2\mathrm{d}x=0.
\end{cases}
\label{a2}
\end{equation}
Using$\bf(H1)$ and $\bf(H4)$, we know
$$
r_2<\min\{r_1,-r_1\}\leqslant0 \quad \text{and} \quad -1<\frac{r_1}{r_2}<1.
$$
It then follows from \eqref{a2} that
$$
\theta=\theta_{\lambda_*}=\arccos\left(\frac{r_1}{r_2}\right),
h=h_{\lambda_*}=\sqrt{\frac{r_1+r_2}{r_2-r_1}}\cdot\frac{\int_{\Omega}m(x)\phi^2(x)\mathrm{d}x}{\int_{\Omega}\phi^2(x)\mathrm{d}x}.
$$
Substituting $\theta_{\lambda_*}$ and $h_{\lambda_*}$ into \eqref{a5}, we can derive the equation for $z_{\lambda_*}$.
Therefore, \eqref{a3} has a unique solution $(z_{\lambda_*},\beta_{\lambda_*},h_{\lambda_*},\theta_{\lambda_*})$.
\end{proof}

\begin{proposition}\label{unique}
Assume that $\bf(H1)$, $\bf(H2)$ and $\bf(H4)$ hold. Then there exist  $\tilde{\lambda}^*\in(\lambda_*,\overline{\lambda}]$ and a
continuously differentiable mapping $\lambda\mapsto(z_{\lambda},\beta_{\lambda},h_{\lambda},\theta_{\lambda})$ from $[\lambda_*,\tilde{\lambda}^*]$ to $(X_1)_{\mathbb{C}}\times\mathbb{R}^2\times[0,2\pi)$ such that $G(z_{\lambda},\beta_{\lambda},h_{\lambda},\theta_{\lambda},\lambda)=0$. Moreover,
\begin{equation}
\begin{cases}
G(z,\beta,h,\theta,\lambda)=0,\\
h,\beta\geqslant0,\quad \theta\in[0,2\pi)
\end{cases}
\label{a4}
\end{equation}
has a unique solution $(z_{\lambda},\beta_{\lambda},h_{\lambda},\theta_{\lambda})$ for $\lambda\in[\lambda_*,\tilde{\lambda}^*]$.
\end{proposition}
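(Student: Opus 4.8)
The plan is to obtain the branch via the implicit function theorem applied at the point produced in Lemma~\ref{G}, and then to exclude any further solution of \eqref{a4} in a fixed slice $\{\lambda\}$ by the same compactness‑and‑contradiction scheme already used in Propositions~\ref{0W} and~\ref{TW1}. Write $L_0:=\Delta+\lambda_*m(x)$. I would regard $G=(g_1,g_2)$ as a $C^1$ map between the real Banach spaces $(X_1)_{\mathbb C}\times\mathbb R^2\times\mathbb R\to Y_{\mathbb C}\times\mathbb R$; by Lemma~\ref{G}, $G(z_{\lambda_*},\beta_{\lambda_*},h_{\lambda_*},\theta_{\lambda_*},\lambda_*)=0$, and \textbf{(H1)}, \textbf{(H4)} force $r_2<0<r_1<-r_2$, so that $\beta_{\lambda_*}=1$, $h_{\lambda_*}>0$, $\theta_{\lambda_*}=\arccos(r_1/r_2)\in(\pi/2,\pi)$ and the constraints $h,\beta\geqslant0$, $\theta\in[0,2\pi)$ are strictly satisfied near this point, hence may be dropped. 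The key step is to show that $\mathcal L:=D_{(z,\beta,h,\theta)}G$ at $(z_{\lambda_*},\beta_{\lambda_*},h_{\lambda_*},\theta_{\lambda_*},\lambda_*)$ is a bounded linear isomorphism. Since every $z$‑dependent term of $g_1$ carries a factor $\lambda-\lambda_*$, at $\lambda=\lambda_*$ one computes
\[
\partial_z g_1\,\zeta=L_0\zeta,\qquad \partial_\beta g_1=-L_0 z_{\lambda_*},\qquad \partial_h g_1\,\eta=-i\eta\phi,\qquad \partial_\theta g_1\,t=-it\,\alpha_{\lambda_*}D\,\nabla\cdot(\phi\nabla\phi)\,e^{-i\theta_{\lambda_*}},
\]
together with $\partial_\beta g_2=2$ and $\partial_z g_2=\partial_h g_2=\partial_\theta g_2=0$, where the expression for $\partial_\beta g_1$ uses the equation defining $z_{\lambda_*}$. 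Inverting $\mathcal L$ is then a triangular solve: the scalar equation fixes the $\beta$‑increment; pairing the $Y_{\mathbb C}$‑component with $\phi$ and using $\langle\phi,\nabla\cdot(\phi\nabla\phi)\rangle=r_2/D$ and \eqref{a2} reduces the $(h,\theta)$‑increments to a $2\times2$ real system with determinant $h_{\lambda_*}\int_\Omega\phi^2\,\mathrm{d}x\neq0$; finally, since $L_0|_{(X_1)_{\mathbb C}}$ maps $(X_1)_{\mathbb C}$ boundedly onto $(Y_1)_{\mathbb C}$ with bounded inverse and $L_0 z_{\lambda_*}\in(Y_1)_{\mathbb C}$, the remaining $(Y_1)_{\mathbb C}$‑component determines the $z$‑increment. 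The implicit function theorem then produces a $C^1$ map $\lambda\mapsto(z_\lambda,\beta_\lambda,h_\lambda,\theta_\lambda)$ on some $[\lambda_*,\tilde\lambda^*]$ with $G=0$, and shrinking $\tilde\lambda^*$ so that $\tilde\lambda^*\leqslant\overline\lambda$ and (by continuity) $\beta_\lambda,h_\lambda>0$, $\theta_\lambda\in[0,2\pi)$ keeps it valued where required.

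For uniqueness in the slice, suppose otherwise; then for a sequence $\lambda_n\downarrow\lambda_*$ there are solutions $(z_n,\beta_n,h_n,\theta_n)$ of \eqref{a4} at $\lambda=\lambda_n$ distinct from the branch values. Now $\beta_n\in[0,1]$ by $g_2=0$, $\theta_n\in[0,2\pi]$, and, because such a solution encodes the purely imaginary eigenvalue $\mu_n=ih_n(\lambda_n-\lambda_*)$ of \eqref{linear} with eigenfunction $\psi_n=\beta_n\phi+(\lambda_n-\lambda_*)z_n$, Lemma~\ref{bound} (valid since $\mathrm{Re}\,\mu_n=0$) bounds $|h_n|=|\mu_n/(\lambda_n-\lambda_*)|$. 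Re‑running Steps~1--2 of Proposition~\ref{0W} (test $g_1=0$ against $z_n$, control the memory terms via \eqref{realFS}, absorb the $O(\lambda_n-\lambda_*)$ terms, and invoke the spectral gap \eqref{secondvalue}; the unimodular factor $e^{-i\theta_n}$ affects none of the bounds) shows $\{z_n\}$ is bounded in $(X_1)_{\mathbb C}$, hence precompact in $Y_{\mathbb C}$. Passing to a subsequence, $(z_n,\beta_n,h_n,\theta_n)\to(z^*,\beta^*,h^*,\theta^*)$ and $\lambda_n\to\lambda_*$; taking the limit in the resolvent form of $g_1=0$ gives $z^*\in(X_1)_{\mathbb C}$ and $(z^*,\beta^*,h^*,\theta^*)$ solving \eqref{a3}, with the constraints preserved ($\beta^*=1$, $h^*\geqslant0$, and $\theta^*\notin\{0,2\pi\}$ since $\cos 0\neq r_1/r_2$). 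By the uniqueness assertion of Lemma~\ref{G}, $(z^*,\beta^*,h^*,\theta^*)=(z_{\lambda_*},\beta_{\lambda_*},h_{\lambda_*},\theta_{\lambda_*})$. Since the branch values $(z_{\lambda_n},\beta_{\lambda_n},h_{\lambda_n},\theta_{\lambda_n})$ converge to the same limit, for large $n$ both triples (with $\lambda_n$ appended) lie in the neighbourhood where the implicit function theorem guarantees a unique zero of $G$, forcing them to coincide, which contradicts their distinctness. Hence, for $\tilde\lambda^*$ small enough, \eqref{a4} has the unique solution $(z_\lambda,\beta_\lambda,h_\lambda,\theta_\lambda)$ for every $\lambda\in[\lambda_*,\tilde\lambda^*]$.

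I expect the main obstacle to be verifying that $\mathcal L$ is an isomorphism: everything reduces to the non‑degeneracy of the $(h,\theta)$‑block, and this is precisely where \textbf{(H1)} and \textbf{(H4)} are needed (they give $r_2<0$, so $\alpha_{\lambda_*}r_2e^{-i\theta_{\lambda_*}}$ has nonzero imaginary part, and $r_1+r_2<0$, so $h_{\lambda_*}\neq0$). The a priori bound on $\{z_n\}$ required for the uniqueness step is the most computational ingredient, but it is essentially a transcription of the spectral‑gap estimate of Proposition~\ref{0W}, now with a harmless modulus‑one exponential, so I do not anticipate any new difficulty there.
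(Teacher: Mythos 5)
Your proposal is correct and follows essentially the same route as the paper: the implicit function theorem at $(z_{\lambda_*},\beta_{\lambda_*},h_{\lambda_*},\theta_{\lambda_*},\lambda_*)$, with invertibility of the linearization reduced (after projecting onto $\phi$) to a $2\times2$ block that is non-degenerate because $\sin\theta_{\lambda_*}\neq0$, followed by the same compactness-and-limit argument for uniqueness in the slice. Your explicit triangular construction of the inverse of $\mathcal L$ and your explicit appeal to the local uniqueness clause of the implicit function theorem at the end are slightly more complete than the paper's (which only checks injectivity and leaves the final coincidence implicit), but the substance is identical.
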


\begin{proof}
Define the operator $T=(T_1,T_2): (X_1)_{\mathbb{C}}\times\mathbb{R}^2\times[0,2\pi)\rightarrow Y_{\mathbb{C}}\times\mathbb{R}$ by
$$
T=D_{(z,\beta,h,\theta)}G(z_{\lambda_*},\beta_{\lambda_*},h_{\lambda_*},\theta_{\lambda_*},\lambda_*).
$$
Then,
\begin{equation*}
\begin{split}
T_1(\epsilon,\kappa,\chi,\varsigma)
&=[\Delta+\lambda_*m(x)]\epsilon-i\phi\chi-i\alpha_{\lambda_*}D\nabla\cdot(\phi\nabla\phi)e^{-i\theta_{\lambda_*}}\varsigma\\
&\quad +[\alpha_{\lambda_*}D\nabla\cdot(\phi\nabla\phi)e^{-i\theta_{\lambda_*}}+\alpha_{\lambda_*}D\nabla\cdot(\phi\nabla\phi)
+[m(x)-2\lambda_*\alpha_{\lambda_*}\phi-h_{\lambda_*}i]\phi]\kappa,\\
T_2(\kappa)
&=2\kappa\|\phi\|^2_{Y_{\mathbb{C}}}.
\end{split}
\end{equation*}

We firstly prove that $T$ is a bijective mapping from $(X_1)_{\mathbb{C}}\times\mathbb{R}^2\times[0,2\pi)$ to $Y_{\mathbb{C}}\times\mathbb{R}$. To this end, it suffices to verify
that $T$ is injective. If $T(\epsilon,\kappa,\chi,\varsigma)=0$, then $T_2(\kappa)=0$, which implies that $\kappa=0$. Substituting $\kappa=0$ into $T_1(\epsilon,\kappa,\chi,\varsigma)$, we have
$$
(\Delta+\lambda_*m(x))\epsilon-i\phi\chi-i\alpha_{\lambda_*}D\nabla\cdot(\phi\nabla\phi)e^{-i\theta_{\lambda_*}}\varsigma=0,
$$
and hence
$$
-i\chi\int_{\Omega}\phi^2\mathrm{d}x-i\alpha_{\lambda_*}r_2(\cos\theta_{\lambda_*}-i\sin\theta_{\lambda_*})\varsigma=0.
$$
By separating the real part and the imaginary part, we obtain
\begin{equation*}
\begin{cases}
-\chi\int_{\Omega}\phi^2\mathrm{d}x-\alpha_{\lambda_*}r_2\varsigma\cos\theta_{\lambda_*}=0,\\
-\alpha_{\lambda_*}r_2\varsigma\sin\theta_{\lambda_*}=0.
\end{cases}
\end{equation*}
Since $\sin\theta_{\lambda_*}\neq0$ from \eqref{a2}, we have $\varsigma=0, \chi=0$ and consequently $\epsilon=0$. Therefore, $T$ is bijective from $(X_1)_{\mathbb{C}}\times\mathbb{R}^2\times[0,2\pi)$ to $Y_{\mathbb{C}}\times\mathbb{R}$. By the implicit function theorem, there
exists a continuously differentiable mapping $\lambda\mapsto(z_{\lambda},\beta_{\lambda},h_{\lambda},\theta_{\lambda})$ from $[\lambda_*,\tilde{\lambda}^*]$ to $(X_1)_{\mathbb{C}}\times\mathbb{R}^2\times[0,2\pi)$ such that $G(z_{\lambda},\beta_{\lambda},h_{\lambda},\theta_{\lambda},\lambda)=0$.

To prove the uniqueness, we shall verify that if $z^{\lambda}\in(X_1)_{\mathbb{C}}, \beta^{\lambda}, h^{\lambda}>0, \theta^{\lambda}\in[0,2\pi)$, and
$G(z^{\lambda},\beta^{\lambda},h^{\lambda},\theta^{\lambda},\lambda)=0$, then
$$
(z^{\lambda},\beta^{\lambda},h^{\lambda},\theta^{\lambda})\rightarrow(z_{\lambda_*},\beta_{\lambda_*},h_{\lambda_*},\theta_{\lambda_*})
$$
as $\lambda\rightarrow\lambda_*$.
It follows from Lemma \ref{bound} and \eqref{g}, $\{h^{\lambda}\}, \{\beta^{\lambda}\}$ and $\{\theta^{\lambda}\}$ are bounded for any $\lambda\in(\lambda_*,\tilde{\lambda}^*]$ and so does $\{\alpha_{\lambda}\}$ and $\{\xi_{\lambda}\}$.
For $\lambda$ sufficiently close to $\lambda_*$, we have $1-(\lambda-\lambda_*)M_2>0$. By a similar argument as in Proposition \ref{0W}, it can be verified that $\{z_{\lambda}\}$ is bounded in $(\mathrm{X}_1)_{\mathbb{C}}$. Therefore, $\{(z^{\lambda},\beta^{\lambda},h^{\lambda},\theta^{\lambda})\}$ is precompact in
$Y_{\mathbb{C}}\times\mathbb{R}^2\times[0,2\pi)$.
Let $\{(z^{\lambda^n},\beta^{\lambda^n},h^{\lambda^n},\theta^{\lambda^n})\}$ be the sequence such that
$$
(z^{\lambda^n},\beta^{\lambda^n},h^{\lambda^n},\theta^{\lambda^n})\rightarrow(z^{\lambda_*},\beta^{\lambda_*},h^{\lambda_*},\theta^{\lambda_*})\,\,
\text{in}\,\, Y_{\mathbb{C}}\times\mathbb{R}^2\times[0,2\pi), \,\,\lambda^n\rightarrow\lambda_*\,\, \text{as}\,\, n\rightarrow\infty.
$$
Taking the limit of the equation $[\Delta+\lambda_*m(x)]^{-1}g_1(z^{\lambda^n},\beta^{\lambda^n},h^{\lambda^n},\theta^{\lambda^n},\lambda^n)=0$ as $n\rightarrow\infty$, we see
$$
(z^{\lambda^n},\beta^{\lambda^n},h^{\lambda^n},\theta^{\lambda^n})\rightarrow(z^{\lambda_*},\beta^{\lambda_*},h^{\lambda_*},\theta^{\lambda_*})
\,\,\text{in}\,\, (X_1)_{\mathbb{C}}\times\mathbb{R}^2\times[0,2\pi) \,\,\text{as}\,\, n\rightarrow\infty,
$$
and $G(z^{\lambda_*},\beta^{\lambda_*},h^{\lambda_*},\theta^{\lambda_*},\lambda_*)=0$. It follows from Lemma \ref{G} that
$$
(z^{\lambda_*},\beta^{\lambda_*},h^{\lambda_*},\theta^{\lambda_*})=(z_{\lambda_*},\beta_{\lambda_*},h_{\lambda_*},\theta_{\lambda_*}).
$$
This completes the proof.
\end{proof}

\begin{corollary}\label{cor}
Assume that $\bf(H1)$, $\bf(H2)$ and $\bf(H4)$ hold, then for each fixed $\lambda\in(\lambda_*,\tilde{\lambda}^*]$, $\mu=i\omega, \omega>0$ is an eigenvalue of equation \eqref{linear} if and only if
$$
\omega=\omega_{\lambda}=h_{\lambda}(\lambda-\lambda_*),\quad \tau=\tau_{n}=\frac{\theta_{\lambda}+2n\pi}{\omega_{\lambda}},\quad n=0,1,2\cdots
$$
and
$$
\psi=c\psi_{\lambda},\quad \psi_{\lambda}=\beta_{\lambda}\phi+(\lambda-\lambda_*)z_{\lambda}
$$
where $c$ is a nonzero constant and $z_{\lambda},\beta_{\lambda},h_{\lambda},\theta_{\lambda}$ are defined as in Theorem \ref{unique}.
\end{corollary}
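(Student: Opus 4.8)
The plan is to derive Corollary~\ref{cor} as an essentially immediate consequence of Proposition~\ref{unique}, by translating purely imaginary eigenvalues of \eqref{linear} into solutions of the system $G=0$ appearing in \eqref{a4}. Recall that the reduction carried out just before Lemma~\ref{G} shows that, under the ansatz \eqref{phiFJ} with $\mu=i\omega=ih(\lambda-\lambda_*)$ and $\theta=\omega\tau$, the characteristic equation \eqref{characteristic} becomes, after cancelling the common factor $\lambda-\lambda_*$ (which is where $\lambda>\lambda_*$ is used, together with $\Delta\phi=-\lambda_*m(x)\phi$ and $u_\lambda=(\lambda-\lambda_*)\alpha_\lambda f_1$), exactly the equation $g_1=0$ in \eqref{g}, while the normalization $\|\psi\|_{Y_{\mathbb{C}}}^2=\|\phi\|_{Y_{\mathbb{C}}}^2$ is $g_2=0$. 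Thus the statement reduces to making this correspondence bijective in both directions and to tracking how the delay enters through the phase; fix $\lambda\in(\lambda_*,\tilde\lambda^*]$ throughout.

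For the ``$\Leftarrow$'' direction I would start from the solution $(z_\lambda,\beta_\lambda,h_\lambda,\theta_\lambda)$ of \eqref{a4} provided by Proposition~\ref{unique}, so $g_1=g_2=0$. Put $\omega_\lambda=h_\lambda(\lambda-\lambda_*)$, which is positive since $h_\lambda>0$; moreover $\theta_\lambda\in(0,\pi)$ for $\lambda$ near $\lambda_*$ (from Lemma~\ref{G}, $\theta_{\lambda_*}=\arccos(r_1/r_2)$ with $r_1/r_2\in(-1,1)$, together with the continuity in Proposition~\ref{unique}), so each $\tau_n=(\theta_\lambda+2n\pi)/\omega_\lambda$, $n\ge0$, is positive. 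Reading the reduction in reverse, $g_1(z_\lambda,\beta_\lambda,h_\lambda,\theta_\lambda,\lambda)=0$ says precisely that $\psi_\lambda=\beta_\lambda\phi+(\lambda-\lambda_*)z_\lambda$ solves \eqref{characteristic} with $\mu=i\omega_\lambda$ whenever $e^{-i\omega_\lambda\tau}=e^{-i\theta_\lambda}$, i.e.\ whenever $\tau=\tau_n$; hence $i\omega_\lambda$ is an eigenvalue of \eqref{linear} at each $\tau_n$ with eigenfunction $\psi_\lambda$, and, since \eqref{characteristic} is linear and homogeneous in $\psi$, with eigenfunction $c\psi_\lambda$ for every $c\ne0$.

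For the ``$\Rightarrow$'' direction, suppose $\mu=i\omega$ with $\omega>0$ is an eigenvalue of \eqref{linear}, so some $\tau\ge0$ and $\psi\in X_{\mathbb{C}}\setminus\{0\}$ solve \eqref{characteristic}; the case $\tau=0$ is ruled out by Proposition~\ref{0W}, so $\tau>0$. Scaling by a positive constant I may take $\|\psi\|_{Y_{\mathbb{C}}}^2=\|\phi\|_{Y_{\mathbb{C}}}^2$, and decompose $\psi=b\phi+\tilde z$ along $X_{\mathbb{C}}=(\mathrm{Ker}(\Delta+\lambda_*m(x)))_{\mathbb{C}}\oplus(X_1)_{\mathbb{C}}$. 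If $b=0$, then $(\tilde z/(\lambda-\lambda_*),\,0,\,\omega/(\lambda-\lambda_*),\,\omega\tau\bmod2\pi,\,\lambda)$ solves \eqref{a4} with vanishing $\beta$-component, contradicting the uniqueness in Proposition~\ref{unique} (whose solution has $\beta_\lambda>0$); so $b\ne0$, and multiplying $\psi$ by $|b|/b$ I may assume $b=\beta\ge0$. Writing $\psi=\beta\phi+(\lambda-\lambda_*)z$ with $z\in(X_1)_{\mathbb{C}}$, $h=\omega/(\lambda-\lambda_*)>0$, $\theta=\omega\tau\bmod2\pi\in[0,2\pi)$, and using $e^{-i\omega\tau}=e^{-i\theta}$, the same reduction shows $(z,\beta,h,\theta,\lambda)$ solves \eqref{a4}, so by uniqueness it equals $(z_\lambda,\beta_\lambda,h_\lambda,\theta_\lambda,\lambda)$. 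Hence $\omega=h_\lambda(\lambda-\lambda_*)=\omega_\lambda$, $\psi=\psi_\lambda$, and $\omega_\lambda\tau\equiv\theta_\lambda\pmod{2\pi}$ with $\tau>0$ and $\theta_\lambda\in[0,2\pi)$ forces $\tau=\tau_n$ for some $n\in\{0,1,2,\dots\}$; undoing the two scalings, the original eigenfunction is a nonzero scalar multiple of $\psi_\lambda$.

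I do not anticipate a real obstacle: the argument is bookkeeping built on Proposition~\ref{unique}. The two points needing care are the justification that \eqref{characteristic} is equivalent to $G=0$ under \eqref{phiFJ} — the cancellation of $\lambda-\lambda_*$ being exactly where $\lambda>\lambda_*$ is essential — and the bookkeeping with the phase: $e^{-\mu\tau}$ depends on $\tau$ only modulo $2\pi/\omega$, which is what turns the single admissible phase $\theta_\lambda$ into the countable family $\{\tau_n\}$, and which is also why checking $\theta_\lambda\in(0,\pi)$ is worthwhile, so that all $\tau_n$ — in particular $\tau_0$ — are genuinely positive and the statement is non-vacuous.
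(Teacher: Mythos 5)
Your proposal is correct and follows exactly the route the paper intends: Corollary \ref{cor} is stated as an immediate consequence of the reduction of \eqref{characteristic} to the system $G=0$ in \eqref{g} (via the ansatz \eqref{phiFJ} and division by $\lambda-\lambda_*$) together with the uniqueness of the solution of \eqref{a4} from Proposition \ref{unique}. Your additional bookkeeping — ruling out a vanishing $\phi$-component via uniqueness, normalizing the scalar factor, and checking $\theta_\lambda\in(0,\pi)$ so that every $\tau_n$ is positive — is exactly the content the paper leaves implicit.
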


In the following, we will prove the transversality condition of the pure imaginary eigenvalue $i\omega$. From \eqref{characteristic}, it can be seen that any eigenvalues $\mu$ of \eqref{linear} with $(\lambda,\tau,\psi)$ in $\mathbb{R}_+^2\times X_{\mathbb{C}}\setminus\{0\}$ must satisfy
\begin{equation}\label{Q}
\begin{split}
\emph{Q}(\lambda,\mu,\tau)
&=\int_{\Omega}\psi\Delta\psi\mathrm{d}x+e^{-\mu\tau}\int_{\Omega}\psi[D\nabla\cdot(u_{\lambda}\nabla\psi)]\mathrm{d}x\\
&\quad+\int_{\Omega}\psi[D\nabla\cdot(\psi\nabla u_{\lambda})+\lambda m(x)\psi-2\lambda u_{\lambda}\psi-\mu\psi]\mathrm{d}x\\
&=0.
\end{split}
\end{equation}

\begin{proposition}\label{transcon}
Assume $\bf(H1)$, $\bf(H2)$ and $\bf(H4)$ hold. There exists the neighbourhood $O_n\times D_n\subset\mathbb{R}\times\mathbb{C}$ of $(\tau_n,i\omega)$ and continuous differential mapping $\tau\mapsto\mu(\tau)$ from $O_n$ to $D_n$, such that $\mu(\tau_n)=i\omega_{\lambda}$ and $\emph{Q}(\lambda,\mu(\tau),\tau)=0$. Moreover,
$$
\frac{\mathrm{d}\mathrm{Re}\left(\mu(\tau_n)\right)}{\mathrm{d}\tau}>0,\quad n=0,1,2,\cdots.
$$
\end{proposition}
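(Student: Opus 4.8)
The plan is to apply the standard simple-eigenvalue Hopf-transversality machinery to the scalar bifurcation relation $Q(\lambda,\mu,\tau)=0$ of \eqref{Q}, using that in \eqref{Q} the delay $\tau$, and---apart from the explicit $-\mu\psi$---also $\mu$, occur only through the factor $e^{-\mu\tau}$, together with the small-$(\lambda-\lambda_*)$ asymptotics from Proposition~\ref{stst} and Corollary~\ref{cor}. Fix $\lambda\in(\lambda_*,\tilde\lambda^*]$, put $s=\lambda-\lambda_*>0$, and recall that the pure imaginary eigenvalue is $i\omega_\lambda=ih_\lambda s$ with eigenfunction $\psi_\lambda=\beta_\lambda\phi+sz_\lambda$ and critical delays $\tau_n=(\theta_\lambda+2n\pi)/\omega_\lambda$, so that $e^{-i\omega_\lambda\tau_n}=e^{-i\theta_\lambda}$; as $s\to0$ one has $\psi_\lambda\to\phi$, $\beta_\lambda\to1$, $h_\lambda\to h_{\lambda_*}>0$, $\theta_\lambda\to\theta_{\lambda_*}$, $\alpha_\lambda\to\alpha_{\lambda_*}>0$, $u_\lambda=s\alpha_\lambda(\phi+s\xi_\lambda)$ and $s\tau_n\to(\theta_{\lambda_*}+2n\pi)/h_{\lambda_*}$.

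\emph{Step 1 (the branch).} First I would introduce the left eigenfunction $\psi_\lambda^{*}$, a nonzero solution of $\Delta(\lambda,i\omega_\lambda,\tau_n)^{T}\psi^{*}=0$ (transpose with respect to $\int_\Omega(\cdot)(\cdot)\,\mathrm{d}x$), which exists by the geometric simplicity of $i\omega_\lambda$ (Corollary~\ref{cor}) and, since the transposed operator likewise degenerates to $\Delta+\lambda_*m(x)$ at $s=0$, also satisfies $\psi_\lambda^{*}\to\phi$. From \eqref{characteristic}, $\partial_\mu\Delta(\lambda,\mu,\tau)\psi=-\tau e^{-\mu\tau}D\nabla\!\cdot(u_\lambda\nabla\psi)-\psi$ and $\partial_\tau\Delta(\lambda,\mu,\tau)\psi=-\mu e^{-\mu\tau}D\nabla\!\cdot(u_\lambda\nabla\psi)$, so pairing with $\psi_\lambda^{*}$ gives, at $(\lambda,i\omega_\lambda,\tau_n)$ and up to a common nonzero factor,
$$
\partial_\mu Q=-\tau_n e^{-i\theta_\lambda}B_\lambda-E_\lambda,\qquad \partial_\tau Q=-\,i\omega_\lambda e^{-i\theta_\lambda}B_\lambda,
$$
with $B_\lambda=D\int_\Omega\psi_\lambda^{*}\,\nabla\!\cdot(u_\lambda\nabla\psi_\lambda)\,\mathrm{d}x$ and $E_\lambda=\int_\Omega\psi_\lambda^{*}\psi_\lambda\,\mathrm{d}x$. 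Since $B_\lambda/s\to\alpha_{\lambda_*}D\int_\Omega\phi\,\nabla\!\cdot(\phi\nabla\phi)\,\mathrm{d}x=\alpha_{\lambda_*}r_2$ and $E_\lambda\to\|\phi\|_Y^2=1$, one finds $\partial_\mu Q\to-(1+a\,e^{-i\theta_{\lambda_*}})$ with $a:=(\theta_{\lambda_*}+2n\pi)\alpha_{\lambda_*}r_2/h_{\lambda_*}\ne0$; since $\bf(H1)$ and $\bf(H4)$ give $r_1/r_2\in(-1,0)$, hence $\theta_{\lambda_*}=\arccos(r_1/r_2)\in(\pi/2,\pi)$ with $\sin\theta_{\lambda_*}\ne0$, it follows that $1+a\,e^{-i\theta_{\lambda_*}}\ne0$ (compare imaginary parts), so $\partial_\mu Q\ne0$ once $\tilde\lambda^*$ is close enough to $\lambda_*$. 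The implicit function theorem then produces $O_n\times D_n$ and the $C^1$ map $\tau\mapsto\mu(\tau)$ with $\mu(\tau_n)=i\omega_\lambda$ and $Q(\lambda,\mu(\tau),\tau)\equiv0$.

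\emph{Step 2 (the sign).} By the standard formula for the derivative of a simple eigenvalue (equivalently, by differentiating $Q(\lambda,\mu(\tau),\tau)\equiv0$),
$$
\mu'(\tau_n)=-\left.\frac{\partial_\tau Q}{\partial_\mu Q}\right|_{(\lambda,i\omega_\lambda,\tau_n)}=-\frac{i\omega_\lambda e^{-i\theta_\lambda}B_\lambda}{\tau_n e^{-i\theta_\lambda}B_\lambda+E_\lambda},
$$
so that $\mathrm{sgn}\,\frac{\mathrm{d}}{\mathrm{d}\tau}\mathrm{Re}\,\mu(\tau_n)=-\mathrm{sgn}\,\mathrm{Re}\big(\overline{\partial_\mu Q}\,\partial_\tau Q\big)$. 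Substituting the leading values from Step~1, writing $e^{-i\theta_{\lambda_*}}=\cos\theta_{\lambda_*}-i\sin\theta_{\lambda_*}$, and inserting the two identities of \eqref{a2} that define $\theta_{\lambda_*}$ and $h_{\lambda_*}$, namely $\alpha_{\lambda_*}r_2\cos\theta_{\lambda_*}=\alpha_{\lambda_*}r_1$ and $\alpha_{\lambda_*}r_2\sin\theta_{\lambda_*}=-h_{\lambda_*}$ (with $\|\phi\|_Y=1$), every purely imaginary term cancels and $\mathrm{Re}\big(\overline{\partial_\mu Q}\,\partial_\tau Q\big)$ collapses to $-s^2h_{\lambda_*}^2$ at leading order. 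Hence
$$
\frac{\mathrm{d}}{\mathrm{d}\tau}\mathrm{Re}\,\mu(\tau_n)=\frac{(\lambda-\lambda_*)^2h_{\lambda_*}^2}{\big|1+a\,e^{-i\theta_{\lambda_*}}\big|^2}+o\big((\lambda-\lambda_*)^2\big)>0,\qquad n=0,1,2,\dots,
$$
provided $0<\tilde\lambda^*-\lambda_*$ is small enough.

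The step I expect to be the main obstacle is the rigorous control of the $o(\cdot)$ remainders \emph{uniformly in $n$}: since $\tau_n$ has order $(\lambda-\lambda_*)^{-1}$, each correction enters as $\tau_n$ times a quantity of order $\lambda-\lambda_*$, so one must show that products such as $\tau_n(B_\lambda-s\alpha_{\lambda_*}r_2)$, $\tau_n(e^{-i\theta_\lambda}-e^{-i\theta_{\lambda_*}})$ and $\tau_n(\alpha_\lambda-\alpha_{\lambda_*})$ are genuinely $o(1)$, and that $\psi_\lambda-\phi$ and $\psi_\lambda^{*}-\phi$ contribute only at order $\lambda-\lambda_*$. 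This rests on the $C^1$ dependence of $\lambda\mapsto(\xi_\lambda,\alpha_\lambda,z_\lambda,\beta_\lambda,h_\lambda,\theta_\lambda)$ from Propositions~\ref{stst} and \ref{unique}, on the a priori bounds \eqref{fangsuoC}, and on the embedding $X_{\mathbb{C}}\hookrightarrow C^{1}(\overline{\Omega})$. With those estimates in hand, the explicit differentiation of $Q$ and the trigonometric simplification in Step~2 are routine.
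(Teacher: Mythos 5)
Your proposal is correct and follows essentially the same route as the paper: the paper's $S_n(\lambda)$ is exactly your $-\partial_\mu Q$, its nonvanishing as $\lambda\to\lambda_*$ is proved by the same imaginary-part argument via $\sin\theta_{\lambda_*}\neq 0$, and the transversality sign comes from $\mu'(\tau_n)=-\partial_\tau Q/\partial_\mu Q$ at leading order $(\lambda-\lambda_*)^2$ using the relations \eqref{a2}, yielding the same limit $h_{\lambda_*}^2/\lim|S_n|^2>0$. The only notable deviations are that you pair with a left eigenfunction $\psi_\lambda^{*}$ whereas the paper pairs the differentiated eigenvalue equation directly with $\overline{\psi}_\lambda$ (treating $\Delta(\lambda,\mu,\tau)$ as self-transposed under $\int_\Omega uv\,\mathrm{d}x$), and that you explicitly flag the uniformity-in-$n$ of the remainders, which the paper leaves implicit.
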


\begin{proof}
We shall finish the proof in two steps.

Step1:
For convenience, we define
$$
S_n(\lambda):=\int_{\Omega}\psi^2_{\lambda}\mathrm{d}x+\tau_nDe^{-i\theta_{\lambda}}\int_{\Omega}\psi_{\lambda}\nabla\cdot(u_{\lambda}\nabla\psi_{\lambda})\mathrm{d}x,\quad n=0,1,2,\cdots.
$$
It follows from Proposition \ref{unique} and Corollary \ref{cor} that $\psi_{\lambda}\rightarrow\phi, \theta_{\lambda}\rightarrow\theta_{\lambda_*}$ as $\lambda\rightarrow\lambda_*$. Since $\sin\theta_{\lambda_*}\neq0$ from \eqref{a2}, then
\begin{equation*}
\begin{split}
\lim\limits_{\lambda\rightarrow\lambda_*}S_n
&=\int_{\Omega}\phi^2\mathrm{d}x+\alpha_{\lambda_*}\frac{\theta_{\lambda_*}+2n\pi}{h_{\lambda_*}}r_2e^{-i\theta_{\lambda_*}}\\
&=\int_{\Omega}\phi^2\mathrm{d}x+\alpha_{\lambda_*}\frac{\theta_{\lambda_*}+2n\pi}{h_{\lambda_*}}r_2\cos\theta_{\lambda_*}
-i\alpha_{\lambda_*}\frac{\theta_{\lambda_*}+2n\pi}{h_{\lambda_*}}r_2\sin\theta_{\lambda_*}\\
&\neq0.
\end{split}
\end{equation*}
Therefore, $S_n(\lambda)\neq0$ with $\lambda\in(\lambda_*, \tilde{\lambda}^*]$.
Then from \eqref{Q}, we see that
\begin{equation*}
\begin{split}
\frac{\partial{\emph{Q}}}{\partial{\mu}}(\lambda,i\omega_{\lambda},\tau_n)
&=\int_{\Omega}\psi_{\lambda}[D\nabla\cdot(u_{\lambda}\nabla\psi_{\lambda})]\mathrm{d}x\left(-\tau_ne^{-i\theta_{\lambda}}\right)-\int_{\Omega}\psi^2_{\lambda}\mathrm{d}x\\
&=-[\int_{\Omega}\psi^2_{\lambda}\mathrm{d}x+\tau_nDe^{-i\theta_{\lambda}}\int_{\Omega}\psi_{\lambda}\nabla\cdot(u_{\lambda}\nabla\psi_{\lambda})\mathrm{d}x]\\
&=-S_n\neq0.
\end{split}
\end{equation*}
From Implicit function theorem, there exists the neighbourhood $O_n\times D_n\subset\mathbb{R}\times\mathbb{C}$ of $(\tau_n,i\omega)$ and continuous differential mapping $\tau\mapsto\mu(\tau)$ from $O_n$ to $D_n$, such that $\mu(\tau_n)=i\omega_{\lambda}$ and $\emph{Q}(\lambda,\mu(\tau),\tau)=0$. The equipped eigenvalue function of $\mu(\tau)$ is $\psi(\tau)$ and $\psi(\tau_n)=\psi_{\lambda}$.

Step2:
show the  transversality condition. Differential the equation $\Delta(\lambda,\mu(\tau),\tau)\psi(\tau)=0$ with respect to $\tau=\tau_n$, we arrive that
\begin{equation}\label{tran}
\Delta(\lambda,i\omega_{\lambda},\tau_n)\frac{\mathrm{d}\psi(\tau_n)}{\mathrm{d}\tau}
+\frac{\mathrm{d}\mu(\tau_n)}{\mathrm{d}\tau}[D\nabla\cdot(u_{\lambda}\nabla\psi_{\lambda})(-\tau_ne^{-i\theta_{\lambda}})-\psi_{\lambda}]
-i\omega_{\lambda}e^{-i\theta_{\lambda}}D\nabla\cdot(u_{\lambda}\nabla\psi_{\lambda})=0.
\end{equation}
Since
$$
\langle \Delta(\lambda,i\omega_{\lambda},\tau_n)\frac{\mathrm{d}\psi(\tau_n)}{\mathrm{d}\tau}, \overline{\psi}_{\lambda} \rangle
=\langle \frac{\mathrm{d}\psi(\tau_n)}{\mathrm{d}\tau},\Delta(\lambda,-i\omega_{\lambda},\tau_n) \overline{\psi}_{\lambda} \rangle
=0.
$$
Taking inner product of both sides of \eqref{tran} with $\overline{\psi}_{\lambda}$, we obtain
\begin{equation*}
\begin{split}
&\quad \frac{\mathrm{d}\mu(\tau_n)}{\mathrm{d}\tau}
   =\frac{-i\omega_{\lambda}De^{-i\theta_{\lambda}}\int_{\Omega}\nabla\cdot(u_{\lambda}\nabla\psi_{\lambda})\psi_{\lambda}\mathrm{d}x}
      {\int_{\Omega}\psi^2_{\lambda}\mathrm{d}x+\tau_nDe^{-i\theta_{\lambda}}\int_{\Omega}\nabla\cdot(u_{\lambda}\nabla\psi_{\lambda})\psi_{\lambda}\mathrm{d}x}\\
&=\frac{-i\omega_{\lambda}D}{|S_n|^2}
     \left(-e^{-i\theta_{\lambda}}\int_{\Omega}u_{\lambda}|(\nabla\psi_{\lambda})^2|e^{i\tilde{\gamma}_{\lambda}}\mathrm{d}x
      \overline{\int_{\Omega}|\psi^2_{\lambda}|e^{i{\gamma}_{\lambda}}\mathrm{d}x}
      +\tau_nD\left|\int_{\Omega}\nabla\cdot(u_{\lambda}\nabla\psi_{\lambda})\psi_{\lambda}\mathrm{d}x\right|^2\right)  \\
&=\frac{\omega_{\lambda}D\sin\left(\theta_{\lambda}+\gamma_{\lambda}-\tilde{\gamma}_{\lambda}\right)}{|S_n|^2}
       \int_{\Omega}u_{\lambda}|(\nabla\psi_{\lambda})^2|\mathrm{d}x{\int_{\Omega}|\psi^2_{\lambda}|\mathrm{d}x} \\
&\quad +\frac{i\omega_{\lambda}D}{|S_n|^2}\left[\cos\left(\theta_{\lambda}+\gamma_{\lambda}-\tilde{\gamma}_{\lambda}\right)
           \int_{\Omega}u_{\lambda}|(\nabla\psi_{\lambda})^2|\mathrm{d}x{\int_{\Omega}|\psi^2_{\lambda}|\mathrm{d}x}
        -\tau_nD\left|\int_{\Omega}\nabla\cdot(u_{\lambda}\nabla\psi_{\lambda})\psi_{\lambda}\mathrm{d}x\right|^2\right]
\end{split}
\end{equation*}
where
$$
\gamma_{\lambda}=\mathrm{Arg}(\psi_{\lambda}^2),\quad\quad  \tilde{\gamma}_{\lambda}=\mathrm{Arg}\left[(\nabla\psi_{\lambda})^2\right],
   \quad\quad -\pi<\gamma_{\lambda}, \tilde{\gamma}_{\lambda}\leqslant\pi.
$$
From the expressions of $u_{\lambda},\psi_{\lambda}$ and $\tau_n$, we see that
$$
\theta_{\lambda}\rightarrow\theta_{\lambda_*},\quad \psi_{\lambda}\rightarrow\phi, \quad h_{\lambda}\rightarrow h_{\lambda_*}>0,
\quad \gamma_{\lambda}, \tilde{\gamma}_{\lambda}\rightarrow 0
\quad \text{as} \quad \lambda\rightarrow\lambda^*.
$$
From $\bf(H1)$ and $\bf(H4)$, we have $r_2<0$. It follows from \eqref{a2} that $\sin\theta_{\lambda_*}>0.$ Moreover, we obtain
\begin{equation*}
\begin{split}
\lim\limits_{\lambda\rightarrow\lambda_*}&\frac{1}{(\lambda-\lambda_*)^2}\frac{\mathrm{d}\mathrm{Re}\left(\mu(\tau_n)\right)}{\mathrm{d}\tau}\\
&=\frac{1}{\lim\limits_{\lambda\rightarrow\lambda_*}|S_n|^2}
\left(-\sin\theta_{\lambda_*}h_{\lambda_*}\alpha_{\lambda_*}r_2\int_{\Omega}\phi^2\mathrm{d}x\right)>0.
\end{split}
\end{equation*}
Therefore, for $\lambda\in(\lambda_*, \tilde{\lambda}^*]$
$$
\frac{\mathrm{d}\mathrm{Re}\left(\mu(\tau_n)\right)}{\mathrm{d}\tau}>0.
$$
\end{proof}

Theorem \ref{conclusion} $(1)-(i)$ is a consequence of Proposition \ref{0W} and \ref{TW1}; and Theorem \ref{conclusion} $(1)-(ii)$ follows directly from Proposition \ref{unique} and \ref{transcon}. The proof of the second statement $(2)$ of Theorem \ref{conclusion} is omitted, since it is analogous to the analysis of Theorem \ref{conclusion} $(1)$.

\section{The proof of Theorem \ref{conclusion2}}

Under the assumption ${\bf(A2)}$, 
let $\overline{m}:=\frac{1}{|\Omega|}\int_{\Omega}m(x)\mathrm{d}x$. Then, the following equation has a unique solution for $\rho_m$
\begin{equation}
\label{rho}
\begin{cases}
\left(1+D\overline{m}\right)\Delta\rho_m(x)+\overline{m}(m(x)-\overline{m})=0,&x\in\Omega,\\
\int_{\Omega}\rho_{m}(x)\mathrm{d}x=0,\\
\partial_{\nu}\rho_m(x)=0,&x\in\partial\Omega.
\end{cases}
\end{equation}
Similarly, denote
\begin{equation}
\label{C(m)}
C(m):=\frac{(\overline{m}D+1)\int_{\Omega}(\nabla\rho_m)^2\mathrm{d}x}{\overline{m}^2|\Omega|}.
\end{equation}
Then,  the following equation will also have a unique solution for $\gamma_m$
\begin{equation}
\label{gamma}
\begin{cases}
\left(1+D\overline{m}\right)\Delta\gamma_m(x)+f_{\gamma}(x)=0,&x\in\Omega,\\
\int_{\Omega}\gamma_{m}(x)\mathrm{d}x=0,\\
\partial_{\nu}\gamma_m(x)=0,&x\in\partial\Omega,
\end{cases}
\end{equation}
where
$$
f_{\gamma}(x)=D(\nabla\rho_m)^2+D(\rho_{m}+C(m))\Delta\rho_m+(\rho_m+C(m))(m(x)-2\overline{m}).
$$
Furthermore, from the regular theory for elliptic equations, we know $\rho_m, \gamma_m\in C^{2+\alpha}(\overline{\Omega})$ for $\alpha\in(0,1)$. Similar to Proposition 3.1 in \cite{He}, we construct the upper and lower solutions of \eqref{model} and have the following conclusions on the local expression of steady state.

\begin{proposition}\label{locexis}
Assume that $\bf(A2)$ and $\bf(O)$ hold. Then, there exists a small constant $\lambda^*>0$ such that, for $\lambda\in(0,\lambda^*]$, the positive steady state $u_{\lambda}$ of system \eqref{model} can be locally represented as
\begin{equation}
\label{steady-state}
u_{\lambda}=\overline{m}+\lambda(\rho_m(x)+C(m))+\lambda^2(\gamma_m(x)+K(m))+o(\lambda^2)=:\overline{m}+\lambda f_1(x)
\end{equation}
where
\begin{equation*}\label{Km}
K(m)=\frac{\int_{\Omega}\gamma_m(m(x)-\overline{m})\mathrm{d}x-\int_{\Omega}(\rho_m+C(m))^2\mathrm{d}x}
     {\overline{m}|\Omega|}.
\end{equation*}
Moreover, $u_{\lambda}\in C^{2+\alpha}(\overline{\Omega})$ for $\alpha\in(0,1)$.
\end{proposition}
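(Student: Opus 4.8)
The plan is first to determine the coefficients in \eqref{steady-state} by a formal expansion in $\lambda$, and then to justify it rigorously by a Lyapunov--Schmidt type perturbation argument. The genuine difficulty is that the linearization of the stationary operator $\mathcal{N}[u]:=\Delta u+D\nabla\cdot(u\nabla u)+\lambda u(m(x)-u)$ of \eqref{model} about the constant $\overline{m}$ is $(1+D\overline{m})\Delta$ with Neumann boundary condition, an operator whose kernel is exactly the constants (note $1+D\overline{m}>0$ by $\bf(O)$ since $\overline{m}\le\max_{\overline{\Omega}}m$, and $\overline{m}>0$ is tacit in \eqref{C(m)}).

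For the formal part I would substitute $u_{\lambda}=\overline{m}+\lambda u_1(x)+\lambda^2u_2(x)+o(\lambda^2)$ into $\mathcal{N}[u_{\lambda}]=0$ and match powers of $\lambda$. The $O(\lambda)$ balance, $(1+D\overline{m})\Delta u_1+\overline{m}(m-\overline{m})=0$ with $\partial_\nu u_1=0$, is solvable because $\int_{\Omega}(m-\overline{m})\,dx=0$, and gives $u_1=\rho_m+C(m)$ with $\rho_m$ the zero--mean solution of \eqref{rho} and $C(m)$ still free. The $O(\lambda^2)$ balance, $(1+D\overline{m})\Delta u_2+D|\nabla u_1|^2+Du_1\Delta u_1+u_1(m-2\overline{m})=0$, is solvable iff $\int_{\Omega}u_1(m-2\overline{m})\,dx=0$ (after using $\int_{\Omega}u_1\Delta u_1=-\int_{\Omega}|\nabla u_1|^2$); substituting $u_1$ and using \eqref{rho} to get $\int_{\Omega}\rho_m(m-\overline{m})\,dx=(1+D\overline{m})\overline{m}^{-1}\int_{\Omega}|\nabla\rho_m|^2\,dx$, this compatibility condition forces $C(m)$ to the value in \eqref{C(m)}. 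Then $u_2=\gamma_m+K(m)$ with $\gamma_m$ the zero--mean solution of \eqref{gamma}, and the solvability condition for the $O(\lambda^3)$ balance, namely $\int_{\Omega}[D\nabla\cdot(u_1\nabla u_2+u_2\nabla u_1)+u_2(m-2\overline{m})-u_1^2]\,dx=0$ (the divergence integrating to zero), pins $K(m)$ to \eqref{Km}. Since $\rho_m,\gamma_m\in C^{2+\alpha}(\overline{\Omega})$ by elliptic regularity, the truncation $w_{\lambda}:=\overline{m}+\lambda(\rho_m+C(m))+\lambda^2(\gamma_m+K(m))$ lies in $C^{2+\alpha}(\overline{\Omega})$ and, by the above, $\mathcal{N}[w_{\lambda}]=\lambda^3R_{\lambda}$ in $C^{\gamma}(\overline{\Omega})$ with $\{R_{\lambda}\}$ bounded and $\int_{\Omega}R_{\lambda}\,dx=O(\lambda)$ --- the leading coefficient being mean--zero precisely because of the choice \eqref{Km}.

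For the rigorous part I would look for a steady state $u_{\lambda}=w_{\lambda}+\psi$; then $\psi$ solves $L_{\lambda}\psi+Q_{\lambda}(\psi)=-\lambda^3R_{\lambda}$ with $L_{\lambda}\psi=\nabla\cdot((1+Dw_{\lambda})\nabla\psi)+D\nabla\cdot(\psi\nabla w_{\lambda})+\lambda(m-2w_{\lambda})\psi$ and $Q_{\lambda}(\psi)=D\nabla\cdot(\psi\nabla\psi)-\lambda\psi^2$. Writing $L_{\lambda}=(1+D\overline{m})\Delta+\lambda B_{\lambda}$ with $B_{\lambda}$ uniformly bounded, the operator $(1+D\overline{m})\Delta$ is an isomorphism from the zero--mean part of $X$ onto the zero--mean subspace $Y_0$, with a bound depending only on the first nonzero Neumann eigenvalue, so $\Pi_0L_{\lambda}$ (with $\Pi_0$ the $L^2$ projection onto $Y_0$) is boundedly invertible there for $\lambda$ small, uniformly. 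Splitting $\psi=\bar\psi+\psi^{\perp}$ into mean and mean--zero parts: the $\psi^{\perp}$--equation is solved, in a ball of radius $O(\lambda^3)$, by the contraction $\psi^{\perp}\mapsto(\Pi_0L_{\lambda})^{-1}\Pi_0(-\lambda^3R_{\lambda}-Q_{\lambda}(\bar\psi+\psi^{\perp}))$, using $\|Q_{\lambda}(\psi)\|_{C^{\gamma}}\le C\|\psi\|_{C^{2+\gamma}}^2$; and integrating the equation over $\Omega$ gives the scalar relation $-\lambda(\overline{m}|\Omega|+O(\lambda))\bar\psi+O(\lambda\|\psi^{\perp}\|)-\lambda\!\int_{\Omega}\psi^2=-\lambda^3\!\int_{\Omega}R_{\lambda}$, from which (because $\int_{\Omega}R_{\lambda}=O(\lambda)$ and $\overline{m}>0$) one gets $\bar\psi=O(\lambda^3)$. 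Hence $\psi=O(\lambda^3)=o(\lambda^2)$, giving \eqref{steady-state}; since $w_{\lambda}>0$ for $\lambda$ small, $u_{\lambda}$ is a positive steady state, and when $D>0$ the uniqueness in Theorem~\ref{existence} identifies it with the $u_{\lambda}$ of the statement (the fixed point being locally unique near $\overline{m}$ in general). The regularity $u_{\lambda}\in C^{2+\alpha}(\overline{\Omega})$ then follows from a $W^{2,p}$ estimate and a Schauder bootstrap applied to $\nabla\cdot((1+Du_{\lambda})\nabla u_{\lambda})=-\lambda u_{\lambda}(m-u_{\lambda})$, using $m\in C^{\alpha}$ and $1+Du_{\lambda}\in C^{1+\alpha}$.

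The main obstacle I expect is precisely the Lyapunov--Schmidt reduction: because the limiting linearization has a one--dimensional kernel (the constants) one cannot invert it outright, and the crux is to (i) solve the complementary part by a \emph{quantitative} contraction with a uniform bound on the restricted inverse of $L_{\lambda}$, and (ii) see that the scalar equation for the constant mode yields an $o(\lambda^2)$ correction only because $\int_{\Omega}R_{\lambda}=O(\lambda)$, i.e.\ only because $C(m)$ and $K(m)$ are defined by exactly the compatibility conditions behind \eqref{C(m)} and \eqref{Km}. A secondary but necessary point is keeping the problem uniformly elliptic, $1+Du_{\lambda}>0$, which $\bf(O)$ secures for $\lambda$ small; a crude a priori localization $u_{\lambda}\to\overline{m}$ can, if preferred, be established first and the fine expansion extracted afterwards, in the spirit of Proposition~3.1 of \cite{He}.
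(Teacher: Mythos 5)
Your proposal is correct in substance, and your formal expansion is precisely the computation that underlies the paper's definitions of $\rho_m$, $C(m)$, $\gamma_m$ and $K(m)$ (the mean-zero solvability conditions at orders $\lambda^2$ and $\lambda^3$ are exactly what force \eqref{C(m)} and the displayed formula for $K(m)$), but your rigorous step takes a genuinely different route. The paper performs no Lyapunov--Schmidt reduction: it pushes the expansion two orders further, defining $\theta_3$ and $\theta_4$ by the analogous solvability conditions, adds the sign-definite corrections $\pm\left(\overline{m}\lambda^3+\frac{\rho_m}{1+D\overline{m}}\lambda^4\right)$, checks by direct computation that the resulting $u_\lambda^{\pm}$ satisfy $\Delta u_\lambda^{\pm}+D\nabla\cdot(u_\lambda^{\pm}\nabla u_\lambda^{\pm})+\lambda u_\lambda^{\pm}(m-u_\lambda^{\pm})=\mp\overline{m}^2\lambda^4+o(\lambda^5)$, and invokes Theorem 3.1 of \cite{Pao} (whose hypothesis ${\bf(H3)}$ is secured by ${\bf(O)}$) to produce a solution squeezed between them; since $u_\lambda^{+}-u_\lambda^{-}=O(\lambda^3)$, the representation \eqref{steady-state} follows. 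That monotone-method route buys existence without ever inverting the degenerate linearization $(1+D\overline{m})\Delta$, at the price of computing two extra orders and of the somewhat delicate choice of $\lambda^3,\lambda^4$ corrections that make the residual sign-definite. Your contraction argument needs only the second-order truncation plus the key observation that $\int_\Omega R_\lambda\,\mathrm{d}x=O(\lambda)$, and it confronts the kernel of constants head-on through the scalar mean equation; your accounting there ($\bar\psi=O(\lambda^3)$ because the leading residual is mean-free, $\psi^{\perp}=O(\lambda^3)$ via the uniformly invertible restricted operator) is correct. Two small points: the mean-zero fixed-point map should also carry the coupling term $\Pi_0L_\lambda\bar\psi=O(\lambda|\bar\psi|)$, which is harmless in the $O(\lambda^3)$ ball but is absent from the map as written; and for $D\leqslant0$ the identification of your fixed point with ``the'' positive steady state rests on local uniqueness near $\overline{m}$ rather than on Theorem \ref{existence} --- a point on which the paper's own proof is equally silent. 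Both arguments tacitly use $\overline{m}>0$ and $1+D\overline{m}>0$, which you correctly flag.
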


\begin{proof}
The definition of $K(m)$ guarantees the existence and uniqueness of the solution for $\theta_3$ to the problem
\begin{equation*}
\begin{cases}
\left(1+D\overline{m}\right)\Delta \theta_3(x)+D(\rho_m+C(m))\Delta\gamma_m+f_3(x)=0, &x\in\Omega,\\
\int_{\Omega}\theta_{3}(x)\mathrm{d}x=0,\\
\partial{\nu}\theta_3=0,&x\in\partial{\Omega},
\end{cases}
\end{equation*}
where
$$
f_3(x)=D(\gamma_m+K(m))\Delta\rho_m+(\gamma_m+K(m))(m(x)-2\overline{m})+2D\nabla\rho_m\nabla\gamma_m-(\rho_m+C(m))^2.
$$
Now, following system has a unique solution for $\theta_4$.
\begin{equation*}\label{theta4}
\begin{cases}
\left(1+D\overline{m}\right)\Delta\theta_4(x)+D(\rho_m+C(m))\Delta\theta_3+f_4(x)=0, &x\in\Omega,\\
\int_{\Omega}\theta_{4}(x)\mathrm{d}x=0,\\
\partial{\nu}\theta_4=0,&x\in\partial{\Omega},
\end{cases}
\end{equation*}
where
\begin{equation*}
\begin{split}
f_4(x)=&D(\nabla \gamma_m)^2+D(\gamma_m+K(m))\Delta\gamma_m+D(\theta_3+c_3)\Delta\rho_m+2D\nabla\theta_3\nabla\rho_m\\
&+(\theta_3+c_3)(m(x)-2\overline{m})-2(\gamma_m+K(m))(\rho_m+C(m))
\end{split}
\end{equation*}
with
$$
c_3=\frac{\int_{\Omega}\theta_3(m(x)-\overline{m})\mathrm{d}x-2\int_{\Omega}(\gamma_m(x)+K(m))(\rho_m(x)+C(x))}
    {\overline{m}|\Omega|}.
$$

Define
\begin{equation}\label{upm}
u_{\lambda}^{\pm}:=\overline{m}+\lambda(\rho_m+C(m))+\lambda^2(\gamma_m+K(m))
+\lambda^3(\theta_3+c_3)+\lambda^4\theta_4\pm\left(\overline{m}\lambda^3+\frac{\rho_m}{1+D\overline{m}}\lambda^4\right).
\end{equation}
We claim that $u_{\lambda}^{\pm}$
are a pair of upper and lower solutions to \eqref{model} for sufficiently small $\lambda>0$. Indeed, direct calculation yields
$$
\Delta u_{\lambda}^{\pm}+D\nabla\cdot( u_{\lambda}^{\pm}\nabla u_{\lambda}^{\pm})+\lambda u_{\lambda}^{\pm}(m(x)- u_{\lambda}^{\pm})
=\mp\overline{m}^2\lambda^4+o(\lambda^5)
$$
where the following equation is used
$$
\pm\frac{\Delta\rho_m}{1+D\overline{m}}\pm \frac{D\overline{m}\Delta\rho_m}{1+D\overline{m}}\pm D\overline{m}\Delta\rho_m
\pm\overline{m}(m(x)-\overline{m})\mp\overline{m}^2=\mp \overline{m}^2.
$$
For $\lambda$ sufficiently small, $0<u_{\lambda}^{-}<u_{\lambda}^+$. By $\bf(O)$, we know $\bf(H3)$ in \cite{Pao} is satisfied. It then follows from Theorem 3.1 in \cite{Pao} that \eqref{model} admints a solution $u_{\lambda}$ such that
$$
u_{\lambda}^-\leqslant u_{\lambda}\leqslant u_{\lambda}^+.
$$
From the definition of $u_{\lambda}^{\pm}$ in \eqref{upm}, $u_{\lambda}$ can be represented as \eqref{steady-state}. Moreover, we know that $u_{\lambda}\in C^{2+\alpha}(\overline{\Omega})$ by the expression of $u_{\lambda}$.
\end{proof}
For convenience, we still denote steady state as $u_{\lambda}$, which is difference from the expression in Section \ref{A1}.
Next, we shall investigate the stability of $u_\lambda$ for $\lambda$ sufficiently small. The characteristic equation associated with $u_\lambda$ is now given by
\begin{equation}
\Delta(\lambda,\mu,\tau)\psi=\Delta\psi+D\nabla\cdot(u_{\lambda}\nabla\psi)e^{-\mu\tau}+D\nabla\cdot(\psi\nabla u_{\lambda})+\lambda[m(x)-u_{\lambda}]\psi-\lambda u_{\lambda}\psi-\mu\psi.
\label{characteristic2}
\end{equation}
As in Section 3, we also need the following two estimations on $\mu_\lambda$ and $\psi_\lambda$.

\begin{lemma}\label{nabla2}
If ${\bf(A2)}$ and ${\bf(P)}$ are satisfied, then
there exists a constant $C$, such that for any $\lambda\in(0,\lambda^*)$ and $(\mu_{\lambda},\tau_{\lambda},\psi_{\lambda})\in\mathbb{C}\times\mathbb{R}_{+}\times X_{\mathbb{C}}\setminus\{0\}$ with $\mathrm{Re}\mu_{\lambda}\geqslant0$ solving \eqref{characteristic2}, we have
$$
\|\nabla\psi_{\lambda}\|_{Y_{\mathbb{C}}}\leqslant C\|\psi_{\lambda}\|_{Y_{\mathbb{C}}}.
$$
\end{lemma}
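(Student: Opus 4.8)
The plan is to transcribe the proof of Lemma~\ref{nabla} almost verbatim: take the $Y_{\mathbb{C}}$-inner product of the characteristic equation \eqref{characteristic2} with the eigenfunction $\psi_\lambda$, extract real parts, and use the sign condition $\mathrm{Re}\mu_{\lambda}\geqslant0$ to absorb the delay term. The only structural difference from Section~\ref{A1} is that the profile $u_\lambda$ is now a small perturbation of the constant $\overline{m}$ rather than a quantity of order $\lambda-\lambda_*$, so what has to be checked is that all the relevant norms of $u_\lambda$ are bounded \emph{uniformly} in $\lambda$.

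First I would record the uniform regularity of the steady state. By Proposition~\ref{locexis}, for $\lambda\in[0,\lambda^*]$ one has $u_\lambda=\overline{m}+\lambda f_1(x)$ with $f_1\in C^{2+\alpha}(\overline{\Omega})$ and $u_\lambda$ depending continuously on $\lambda$ in $C^{2+\alpha}(\overline{\Omega})$; hence there is $C_1>0$, independent of $\lambda\in(0,\lambda^*)$, with $\|u_\lambda\|_{\infty}\leqslant C_1$ and $\|\Delta u_\lambda\|_{\infty}\leqslant C_1$. (Equivalently, this follows from a bootstrap on \eqref{inte}: the uniform $L^\infty$ bound together with $\bf(O)$ gives a uniform $C^{1+\gamma}$ bound, whence $1+Du_\lambda\in C^{1+\gamma}(\overline{\Omega})$ and $D|\nabla u_\lambda|^2\in C^{\gamma}(\overline{\Omega})$ are uniformly controlled, and elliptic Schauder estimates upgrade this to a uniform $C^{2+\gamma}$ bound.)

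Next, let $(\mu_\lambda,\tau_\lambda,\psi_\lambda)$ solve \eqref{characteristic2} with $\mathrm{Re}\mu_{\lambda}\geqslant0$. Taking the inner product of $\psi_\lambda$ with $\Delta(\lambda,\mu_\lambda,\tau_\lambda)\psi_\lambda=0$ and integrating by parts (using $\partial_\nu\psi_\lambda=0$ and $\partial_\nu u_\lambda=0$) yields $\langle\psi_\lambda,\Delta\psi_\lambda\rangle=-\|\nabla\psi_\lambda\|^2_{Y_{\mathbb{C}}}$ and $\langle\psi_\lambda,\nabla\cdot(u_\lambda\nabla\psi_\lambda)\rangle=-\int_\Omega u_\lambda|\nabla\psi_\lambda|^2\mathrm{d}x$, while the term $\langle\psi_\lambda,\nabla\cdot(\psi_\lambda\nabla u_\lambda)\rangle$ is handled by the identity \eqref{realFS} (Lemma 3.1 in \cite{anqi}), which applies since $u_\lambda\in C^{2+\alpha}(\overline{\Omega})$. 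Taking real parts, discarding the nonpositive term $-\mathrm{Re}\mu_{\lambda}\,\|\psi_\lambda\|^2_{Y_{\mathbb{C}}}$, and using $|\mathrm{Re}\,e^{-\mu_\lambda\tau_\lambda}|\leqslant e^{-\tau_\lambda\mathrm{Re}\mu_{\lambda}}\leqslant1$, one gets
\begin{equation*}
\|\nabla\psi_\lambda\|^2_{Y_{\mathbb{C}}}\leqslant|D|\max\limits_{\lambda\in[0,\lambda^*],\overline{\Omega}}\{u_\lambda\}\,\|\nabla\psi_\lambda\|^2_{Y_{\mathbb{C}}}+\Big(\tfrac{|D|}{2}\|\Delta u_\lambda\|_{\infty}+\lambda\|m(x)-2u_\lambda\|_{\infty}\Big)\|\psi_\lambda\|^2_{Y_{\mathbb{C}}}.
\end{equation*}
Assumption $\bf(P)$ gives $|D|\max\limits_{\lambda\in[0,\lambda^*],\overline{\Omega}}\{u_\lambda\}<1$, so the gradient term on the right is absorbed into the left-hand side; combined with the uniform bounds of Step~1, which make the bracketed coefficient bounded independently of $\lambda\in(0,\lambda^*)$, this produces $\|\nabla\psi_\lambda\|_{Y_{\mathbb{C}}}\leqslant C\|\psi_\lambda\|_{Y_{\mathbb{C}}}$ with an explicit constant $C$.

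Since every step mirrors the argument for Lemma~\ref{nabla}, there is no genuine obstacle; the only point needing care is that the constant be truly independent of $\lambda$, which is exactly what the uniform $C^{2+\alpha}$-control of $u_\lambda$ from Proposition~\ref{locexis} supplies. The substantively new difficulties of case $\bf(A2)$ surface later, in the analogue of Lemma~\ref{bound}, where the rescaling by $\lambda-\lambda_*$ exploited in Section~\ref{A1} is no longer available and a different argument for the boundedness of $\mu_\lambda$ is required.
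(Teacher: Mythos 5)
Your proposal is correct and follows exactly the route the paper intends: the paper omits this proof entirely, stating only that it is ``analogous to Lemma \ref{nabla}'', and your transcription of that argument --- together with the observation that the uniform $C^{2+\alpha}$ control of $u_\lambda$ from Proposition \ref{locexis} makes the constant independent of $\lambda$, with $\bf(P)$ absorbing the gradient term --- is precisely the intended adaptation.
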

\begin{proof}
The proof is analogous to Lemma \ref{nabla}, and hence is omitted.
\end{proof}

Notice that $u_{\lambda}\rightarrow\overline{m}$ while $\lambda\rightarrow0$ and \eqref{characteristic2} degenerate into
$$
(1+D\overline{m}e^{-\mu\tau})\Delta \psi=\mu\psi.
$$
Then $\mu\rightarrow0, \psi\rightarrow\phi$ (positive constant) as $\lambda\rightarrow0$. Therefore, we have the following lemma.

\begin{lemma}\label{bound2}
Assume that ${\bf(A2)}$ and ${\bf(P)}$ hold. If $(\mu_{\lambda},\tau_{\lambda},\psi_{\lambda})\in\mathbb{C}\times\mathbb{R}_+\times X_{\mathbb{C}}\setminus\{0\}$ is the solution of \eqref{characteristic2} with $\mathrm{Re}\mu_{\lambda}\geqslant0$, 
 then $|\frac{\mu_{\lambda}}{\lambda}|$ is bounded for $\lambda\in(0,\lambda^*]$.
\end{lemma}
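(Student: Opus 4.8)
The plan is to show that, as $\lambda\to0$, both $\mathrm{Re}(\mu_\lambda)$ and $\mathrm{Im}(\mu_\lambda)$ are $O(\lambda)$; the boundedness of $|\mu_\lambda/\lambda|$ on $(0,\lambda^*]$ then follows at once. The technique of Lemma~\ref{bound} is of no use here: there $u_\lambda=(\lambda-\lambda_*)\alpha_\lambda f_1$ vanishes as $\lambda\to\lambda_*$, so the Rayleigh quotient of the self-adjoint operator $A(\lambda)$ automatically carries the factor $\lambda-\lambda_*$, whereas now $u_\lambda\to\overline{m}\neq0$. The substitute is that, by the expansion in Proposition~\ref{locexis} and elliptic regularity, $u_\lambda$ is a perturbation of the \emph{constant} $\overline{m}$ of size $\lambda$: there is a constant $C_0$ with
\[
\|u_\lambda-\overline{m}\|_{C^2(\overline{\Omega})}\le C_0\lambda,\qquad\text{hence}\qquad \|\nabla u_\lambda\|_\infty\le C_0\lambda,\quad \|\Delta u_\lambda\|_\infty\le C_0\lambda,\qquad \lambda\in(0,\lambda^*],
\]
and likewise $\|\lambda(m-2u_\lambda)\|_\infty=O(\lambda)$.

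First I would take the $Y_{\mathbb{C}}$-inner product of \eqref{characteristic2} with $\psi_\lambda$, integrate by parts using the Neumann condition, and invoke \eqref{realFS} together with $\langle\psi_\lambda,\nabla\cdot(u_\lambda\nabla\psi_\lambda)\rangle=-\int_\Omega u_\lambda|\nabla\psi_\lambda|^2\mathrm{d}x$ (as in the proof of Lemma~\ref{nabla}). The real part of the resulting identity is
\[
\|\nabla\psi_\lambda\|^2_{Y_{\mathbb{C}}}+D\,\mathrm{Re}\{e^{-\mu_\lambda\tau_\lambda}\}\int_\Omega u_\lambda|\nabla\psi_\lambda|^2\mathrm{d}x+\mathrm{Re}(\mu_\lambda)=\frac{D}{2}\int_\Omega|\psi_\lambda|^2\Delta u_\lambda\,\mathrm{d}x+\lambda\int_\Omega(m(x)-2u_\lambda)|\psi_\lambda|^2\mathrm{d}x.
\]
Since $\mathrm{Re}\mu_\lambda\ge0$ forces $|e^{-\mu_\lambda\tau_\lambda}|\le1$, the left-hand side is at least $\bigl(1-|D|\max\limits_{\lambda\in[0,\lambda^*],\,\overline{\Omega}}\{u_\lambda\}\bigr)\|\nabla\psi_\lambda\|^2_{Y_{\mathbb{C}}}+\mathrm{Re}(\mu_\lambda)$, with positive leading coefficient by $\bf(P)$, while the right-hand side is at most $\tfrac{|D|}{2}\|\Delta u_\lambda\|_\infty+\lambda\|m-2u_\lambda\|_\infty\le C\lambda$ because $\|\psi_\lambda\|_{Y_{\mathbb{C}}}=1$. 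Both summands on the left being nonnegative, this yields simultaneously
\[
0\le\mathrm{Re}(\mu_\lambda)\le C\lambda\qquad\text{and}\qquad \|\nabla\psi_\lambda\|^2_{Y_{\mathbb{C}}}\le C'\lambda,\qquad \lambda\in(0,\lambda^*].
\]
The second estimate upgrades the $O(1)$ bound of Lemma~\ref{nabla2} to a sharp $O(\sqrt\lambda)$ gradient bound; this upgrade, driven by the near-constancy of $u_\lambda$ together with $\bf(P)$, is the hard part of the argument, and the rest is bookkeeping.

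For the imaginary part, the imaginary part of the same identity gives $\mathrm{Im}(\mu_\lambda)=-D\,\mathrm{Im}\{e^{-\mu_\lambda\tau_\lambda}\}\int_\Omega u_\lambda|\nabla\psi_\lambda|^2\mathrm{d}x+D\,\mathrm{Im}\langle\psi_\lambda,\nabla\cdot(\psi_\lambda\nabla u_\lambda)\rangle$, and since $\langle\psi_\lambda,\nabla\cdot(\psi_\lambda\nabla u_\lambda)\rangle=-\int_\Omega\psi_\lambda\,(\nabla\overline\psi_\lambda\cdot\nabla u_\lambda)\,\mathrm{d}x$, Cauchy--Schwarz yields
\[
|\mathrm{Im}(\mu_\lambda)|\le|D|\Bigl(\max\limits_{\lambda\in[0,\lambda^*],\,\overline{\Omega}}\{u_\lambda\}\Bigr)\|\nabla\psi_\lambda\|^2_{Y_{\mathbb{C}}}+|D|\,\|\nabla u_\lambda\|_\infty\|\psi_\lambda\|_{Y_{\mathbb{C}}}\|\nabla\psi_\lambda\|_{Y_{\mathbb{C}}}.
\]
Feeding in $\|\nabla\psi_\lambda\|^2_{Y_{\mathbb{C}}}=O(\lambda)$ and $\|\nabla u_\lambda\|_\infty=O(\lambda)$, the first term is $O(\lambda)$ and the second is $O(\lambda^{3/2})$, so $|\mathrm{Im}(\mu_\lambda)|=O(\lambda)$ as well. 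Combining, $|\mu_\lambda/\lambda|\le|\mathrm{Re}(\mu_\lambda)|/\lambda+|\mathrm{Im}(\mu_\lambda)|/\lambda$ is bounded on $(0,\lambda^*]$, which is the assertion.

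An equivalent, more transparent route bypasses the separate treatment of real and imaginary parts: integrating \eqref{characteristic2} over $\Omega$ and using the Neumann condition, the Laplacian and all three divergence terms drop out, leaving $\mu_\lambda\int_\Omega\psi_\lambda\,\mathrm{d}x=\lambda\int_\Omega(m(x)-2u_\lambda)\psi_\lambda\,\mathrm{d}x$. The numerator of $\mu_\lambda/\lambda$ is then manifestly bounded, and the sharp gradient bound above --- through the Poincar\'e--Wirtinger inequality and $\|\psi_\lambda\|_{Y_{\mathbb{C}}}=1$ --- forces $\psi_\lambda$ to stay close to its (nonzero) mean, so that $|\int_\Omega\psi_\lambda\,\mathrm{d}x|$ is bounded away from $0$ for $\lambda$ small, and the conclusion follows. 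Either way, the only genuinely delicate step is the passage from the $O(1)$ gradient estimate of Lemma~\ref{nabla2} to the $O(\sqrt\lambda)$ estimate, which is exactly where the structure $u_\lambda=\overline{m}+O(\lambda)$ special to case $\bf(A2)$ enters.
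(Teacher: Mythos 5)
Your proof is correct, but it takes a genuinely different route from the paper's. The paper reuses the device of Lemma \ref{bound}: it pairs the characteristic equation with the self-adjoint operator $A(\lambda)\psi=\nabla\cdot[(1+Du_\lambda)\nabla\psi]+\lambda(m-u_\lambda)\psi$, whose principal eigenvalue is $0$, to get $0\geqslant\langle\psi_\lambda,A(\lambda)\psi_\lambda\rangle$ and hence an upper bound on $\mathrm{Re}(\mu_\lambda/\lambda)$ and $\mathrm{Im}(\mu_\lambda/\lambda)$ in terms of $\frac{1}{\lambda}(1-e^{-\mu_\lambda\tau_\lambda})$; since that quantity still contains $\mu_\lambda$, the paper then runs a bootstrap using $\sin x\leqslant x$, $1-\cos x\leqslant x^2/2$, an a priori bound $\tau_\lambda<\overline{\tau}$, and a smallness condition designed to make $1-C_a\overline{\tau}>0$. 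You instead extract from the real part of the quadratic form of \eqref{characteristic2} itself the sharp gradient estimate $\|\nabla\psi_\lambda\|^2_{Y_{\mathbb{C}}}=O(\lambda)$ (simultaneously with $0\leqslant\mathrm{Re}\mu_\lambda\leqslant C\lambda$), exploiting $\bf(P)$ and the fact that the only $O(1)$ coefficient in the equation, namely $D\overline{m}$, sits inside a divergence term whose contribution is controlled by $\|\nabla\psi_\lambda\|^2$; the imaginary part then follows by brute force from $|e^{-\mu_\lambda\tau_\lambda}|\leqslant1$ alone. What your approach buys is significant: it needs no restriction on $\tau_\lambda$, no a priori knowledge that $\mu_\lambda\to0$, and no artificial smallness of $\|\psi_\lambda\|_{Y_{\mathbb{C}}}$ (which the paper invokes even though $\psi_\lambda$ is normalized) --- these are precisely the weak points of the published argument, so your version is arguably the more rigorous one. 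The only step you should make explicit is the claim $\|\nabla u_\lambda\|_\infty+\|\Delta u_\lambda\|_\infty\leqslant C_0\lambda$: Proposition \ref{locexis} gives the expansion $u_\lambda=\overline{m}+O(\lambda)$ only in the sup norm via the upper/lower solution sandwich, so you need one round of Schauder estimates (or the identity $\Delta u_\lambda=-\bigl(D|\nabla u_\lambda|^2+\lambda u_\lambda(m-u_\lambda)\bigr)/(1+Du_\lambda)$ together with an interpolation bound on $\nabla u_\lambda$) to upgrade it to $C^2$; the paper makes the same implicit assumption when it writes $u_\lambda=\overline{m}+\lambda f_1$ and uses $\|f_1\|_\infty$, $\|\nabla f_1\|_\infty$ as finite constants, so this is a shared, fillable gap rather than a defect of your argument. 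Your alternative closing route (integrating the equation over $\Omega$ so that all divergence terms vanish, then using Poincar\'e--Wirtinger and the $O(\sqrt{\lambda})$ gradient bound to keep $\int_\Omega\psi_\lambda\,\mathrm{d}x$ away from zero) is also sound and does not appear in the paper.
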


\begin{proof}
Since $|\frac{\mu_{\lambda}}{\lambda}|=|\frac{\overline{\mu}_{\lambda}}{\lambda}|$, we only consider $\mathrm{Im}\mu_{\lambda}\geqslant0$.
By a similar argument in Lemma \ref{bound}, we have
\begin{equation*}
\begin{split}
0\geqslant\langle \psi_{\lambda},A(\lambda)\psi_{\lambda} \rangle&=\mu_{\lambda}
-D(e^{-\mu_{\lambda}\tau_{\lambda}}-1)\langle \psi_{\lambda},\nabla\cdot(u_{\lambda}\nabla\psi_{\lambda}) \rangle\\
&\quad -D\langle \psi_{\lambda},\nabla\cdot(\psi_{\lambda}\nabla u_{\lambda}) \rangle+\langle \psi_{\lambda},\lambda u_{\lambda}\psi_{\lambda} \rangle,
\end{split}
\end{equation*}
which implies
\begin{equation*}
\frac{\mathrm{Re}\mu_{\lambda}}{\lambda}\leqslant\frac{1}{\lambda}\mathrm{Re}\left\{
  D(e^{-\mu_{\lambda}\tau_{\lambda}}-1)\langle \psi_{\lambda},\nabla\cdot(u_{\lambda}\nabla\psi_{\lambda}) \rangle
  +D\langle \psi_{\lambda},\nabla\cdot(\psi_{\lambda}\nabla u_{\lambda}) \rangle-\langle \psi_{\lambda}, \lambda u_{\lambda}\psi_{\lambda}\rangle
  \right\}.
\end{equation*}
Using $\mathrm{Re}\mu_{\lambda}\geqslant0$ and \eqref{steady-state}, we know
\begin{equation}\label{re}
\begin{split}
\frac{\mathrm{Re}\mu_{\lambda}}{\lambda}
&\leqslant\frac{D\overline{m}}{\lambda}\left(1-\mathrm{Re}\{e^{-\mu_{\lambda}\tau_{\lambda}}\}\right)\|\nabla\psi_{\lambda}\|^2_{Y_{\mathbb{C}}}
          +\left\|D(1-e^{-\mu_{\lambda}\tau_{\lambda}}) \langle \nabla\psi_{\lambda}, f_1\nabla\psi_{\lambda}  \rangle  \right\|      \\
  &\quad  +\left\|D\langle   \psi_{\lambda},\nabla\cdot(\psi_{\lambda}\nabla f_1)\right\|
          +\left\|\langle \psi_{\lambda}, u_{\lambda}\psi_{\lambda}   \rangle\right\|           \\
&\leqslant \frac{C_a}{\lambda}\left(1-\mathrm{Re}\{e^{-\mu_{\lambda}\tau_{\lambda}}\}\right)+C_b,
\end{split}
\end{equation}
where
\begin{equation*}
\begin{split}
C_a&=C^2\overline{m}|D|\| \psi_{\lambda}\|_{{Y}_{\mathbb{C}}}^2,\\
C_b&=2C^2|D|\|f_1\|_{\infty}\|\psi_{\lambda}\|^2_{Y_{\mathbb{C}}}+DC\|\nabla f_1\|_{\infty}\|\psi_{\lambda}\|^2_{Y_{\mathbb{C}}}
          +\|u_{\lambda}\|_{\infty}\|\psi_{\lambda}\|^2_{Y_{\mathbb{C}}}
\end{split}
\end{equation*}
which are positive constant. Similarly,

\begin{equation}\label{imagpart}
\frac{\mathrm{Im}\mu_{\lambda}}{\lambda}\leqslant -\frac{C_a}{\lambda}\mathrm{Im}\{e^{-\mu_{\lambda}\tau_{\lambda}}\}+C_b.
\end{equation}

Since $\mu_{\lambda}\rightarrow 0$ as $\lambda\rightarrow 0$,
for $\lambda$ sufficiently small, we have
\begin{equation}\label{fang}
\begin{split}
\sin(\tau_{\lambda}\mathrm{Im}\mu_{\lambda})\leqslant\tau_{\lambda}\mathrm{Im}\mu_{\lambda},\quad\quad\quad\quad\quad \mathrm{Im}\mu_{\lambda} &\leqslant1,\\
1-\cos(\tau_{\lambda}\mathrm{Im}\mu_{\lambda})\leqslant\frac{1}{2}(\tau_{\lambda}\mathrm{Im}\mu_{\lambda})^2,\quad
1-e^{\tau_{\lambda}\mathrm{Re}\mu_{\lambda}}&\leqslant \tau_{\lambda}\mathrm{Re}\mu_{\lambda}.
\end{split}
\end{equation}
Suppose $\tau_{\lambda}<\overline{\tau}$ for some $\overline{\tau}$. Then, we have $1-C_a \overline{\tau} >0$ by letting $\|\psi_{\lambda}\|_{Y_{\mathbb{C}}}$ to be sufficiently small. It follows from \eqref{imagpart} and \eqref{fang} that
\begin{equation*}
\begin{split}
\frac{\mathrm{Im}\mu_{\lambda}}{\lambda}
&\leqslant
     \frac{C_a}{\lambda}e^{-\tau_{\lambda}\mathrm{Re}\mu_{\lambda}}\sin(\tau_{\lambda}\mathrm{Im}\mu_{\lambda})+C_b  \\
&\leqslant \frac{C_a}{\lambda}\overline{\tau}\mathrm{Im}\mu_{\lambda}+C_b,  \\
\end{split}
\end{equation*}
which implies
$$
\frac{\mathrm{Im}\mu_{\lambda}}{\lambda}\leqslant\frac{C_b}{1-C_a\overline{\tau}}:=M_I.
$$
Therefore, $\left|\frac{\mathrm{Im}\mu_{\lambda}}{\lambda}\right|$ is bounded for $\lambda\in(0, \lambda^*]$.
Similarly, using \eqref{re} and \eqref{fang},  we have the following estimate
\begin{equation*}
\begin{split}
0\leqslant\frac{\mathrm{Re}\mu_{\lambda}}{\lambda}
&\leqslant \frac{C_a}{\lambda}\left(1-e^{-{\tau}_{\lambda}\mathrm{Re}\mu_{\lambda}}\cos({\tau}_{\lambda}\mathrm{Im}\mu_{\lambda})\right)+C_b\\
&\leqslant \frac{C_a}{\lambda}\left[e^{-{\tau}_{\lambda}\mathrm{Re}\mu_{\lambda}}\left(1-\cos({\tau}_{\lambda}\mathrm{Im}\mu_{\lambda})\right)
            +1-e^{-{\tau}_{\lambda}\mathrm{Re}\mu_{\lambda}}\right]+C_b\\
&\leqslant \frac{C_a}{\lambda}\left[\frac{1}{2}(\tau_{\lambda}\mathrm{Im}\mu_{\lambda})^2+\tau_{\lambda}\mathrm{Re}\mu_{\lambda}\right]+C_b\\
&\leqslant M_R+\frac{C_a\overline{\tau}}{\lambda}\mathrm{Re}\mu_{\lambda}+C_b
\end{split}
\end{equation*}
where $M_R=\frac{M_IC_a\overline{\tau}^2}{2}$. Accordingly,
$$
\frac{\mathrm{Re}\mu_{\lambda}}{\lambda}\leqslant\frac{M_R+C_b}{1-C_a \overline{\tau}}.
$$
that is, $\left|\frac{\mathrm{Re}\mu_{\lambda}}{\lambda}\right|$ is also bounded for $\lambda\in(0,\lambda^*]$.
\end{proof}


\begin{proposition}{\label{0W2}}
Assume that ${\bf(A2)}$ and $\bf(P)$ hold. Then, there exists $\tilde{\lambda}\in(0,\lambda^*]$ such that all the eigenvalues of \eqref{characteristic2} have negative real parts for any $\lambda\in(0,\tilde{\lambda}]$ and $\tau\geq0$.
\end{proposition}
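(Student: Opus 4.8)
The plan is to adapt the contradiction argument of Propositions~\ref{0W} and~\ref{TW1} to the degenerate base point $\lambda=0$. Under \textbf{(A2)} one has $\lambda_{*}=0$ and $\mathrm{Ker}\,\Delta=\mathrm{span}\{\phi\}$ with $\phi$ a \emph{positive constant}, $\phi\equiv|\Omega|^{-1/2}$, so $\nabla\phi=0$ and $\langle\phi,g\rangle=\phi\int_{\Omega}g$ for every $g$. Suppose the assertion fails: there is a sequence $\{(\lambda_{n},\mu_{\lambda_{n}},\tau_{\lambda_{n}},\psi_{\lambda_{n}})\}_{n\geq1}$ with $\lambda_{n}\to0$, $\mathrm{Re}\,\mu_{\lambda_{n}}\geq0$ and $\|\psi_{\lambda_{n}}\|_{Y_{\mathbb{C}}}=\|\phi\|_{Y_{\mathbb{C}}}=1$ solving \eqref{characteristic2}. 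After normalizing, write $\psi_{\lambda_{n}}=\beta_{\lambda_{n}}\phi+\lambda_{n}z_{\lambda_{n}}$ with $\beta_{\lambda_{n}}\in[0,1]$ and $z_{\lambda_{n}}\in(X_{1})_{\mathbb{C}}$. Using $\nabla\psi_{\lambda_{n}}=\lambda_{n}\nabla z_{\lambda_{n}}$, $\nabla u_{\lambda_{n}}=\lambda_{n}\nabla f_{1}^{(n)}$, and the local expansion $u_{\lambda_{n}}=\overline{m}+\lambda_{n}f_{1}^{(n)}$ with $f_{1}^{(n)}\to\rho_{m}+C(m)$ in $C^{2+\gamma}(\overline{\Omega})$ from Proposition~\ref{locexis}, I would divide \eqref{characteristic2} by $\lambda_{n}$ to get an equation $\widetilde{G}_{1}(z_{\lambda_{n}},\beta_{\lambda_{n}},\tilde{h}_{\lambda_{n}},e^{-\mu_{\lambda_{n}}\tau_{\lambda_{n}}},\lambda_{n})=0$ together with the normalization $\beta_{\lambda_{n}}^{2}-1+\lambda_{n}^{2}\|z_{\lambda_{n}}\|_{Y_{\mathbb{C}}}^{2}=0$, where $\tilde{h}_{\lambda_{n}}:=\mu_{\lambda_{n}}/\lambda_{n}$ is bounded by Lemma~\ref{bound2}.

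Next I would establish a bound $\|z_{\lambda_{n}}\|_{X_{\mathbb{C}}}\leq C$ uniform in $n$ \emph{and} in $\tau_{\lambda_{n}}$. Pairing $\widetilde{G}_{1}=0$ with $z_{\lambda_{n}}$, integrating by parts, and using the identity \eqref{realFS} (with a general smooth function in place of $u_{\lambda}$) on the cross terms, the only potentially large contribution is $D\,e^{-\mu_{\lambda_{n}}\tau_{\lambda_{n}}}\langle\nabla z_{\lambda_{n}},u_{\lambda_{n}}\nabla z_{\lambda_{n}}\rangle$, which is absorbed since $|e^{-\mu_{\lambda_{n}}\tau_{\lambda_{n}}}|\leq1$ (because $\mathrm{Re}\,\mu_{\lambda_{n}}\geq0$) and $|D|\max_{[0,\lambda^{*}]\times\overline{\Omega}}u_{\lambda}<1$ by \textbf{(P)}; the remaining terms are $O(\|z_{\lambda_{n}}\|_{Y_{\mathbb{C}}})$ or $O(\lambda_{n})$ times quadratic ones. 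Combined with the spectral gap $\|\nabla z\|_{Y_{\mathbb{C}}}^{2}\geq\tilde{\lambda}_{2}\|z\|_{Y_{\mathbb{C}}}^{2}$ on $(X_{1})_{\mathbb{C}}$ ($\tilde{\lambda}_{2}>0$ the first positive Neumann eigenvalue of $-\Delta$), this gives boundedness of $\{z_{\lambda_{n}}\}$ and $\{\nabla z_{\lambda_{n}}\}$ in $Y_{\mathbb{C}}$. Then, rewriting $\nabla\cdot(u_{\lambda_{n}}\nabla z_{\lambda_{n}})=u_{\lambda_{n}}\Delta z_{\lambda_{n}}+\nabla u_{\lambda_{n}}\cdot\nabla z_{\lambda_{n}}$ and solving $\widetilde{G}_{1}=0$ for $\Delta z_{\lambda_{n}}$ through the coefficient $1+Du_{\lambda_{n}}e^{-\mu_{\lambda_{n}}\tau_{\lambda_{n}}}$, whose modulus is $\geq1-|D|\max u_{\lambda}>0$, bounds $\{\Delta z_{\lambda_{n}}\}$ in $Y_{\mathbb{C}}$, hence $\{z_{\lambda_{n}}\}$ in $X_{\mathbb{C}}$. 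Therefore $\{z_{\lambda_{n}}\}$ is precompact in $Y_{\mathbb{C}}$ (and in $H^{1}$), and $\{e^{-\mu_{\lambda_{n}}\tau_{\lambda_{n}}}\}$ lies in the closed unit disk; passing to a subsequence, $z_{\lambda_{n}}\to z^{*}$, $\beta_{\lambda_{n}}\to\beta^{*}=1$, $\tilde{h}_{\lambda_{n}}\to\tilde{h}^{*}$ with $\mathrm{Re}\,\tilde{h}^{*}\geq0$, and $e^{-\mu_{\lambda_{n}}\tau_{\lambda_{n}}}\to\zeta^{*}$ with $|\zeta^{*}|\leq1$.

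Finally I would pass to the limit in the equation $\Delta^{-1}\widetilde{G}_{1}=0$ (using that $\Delta^{-1}\colon(Y_{1})_{\mathbb{C}}\to(X_{1})_{\mathbb{C}}$ is bounded), obtaining $z^{*}\in(X_{1})_{\mathbb{C}}$ with
\[
(1+D\overline{m}\,\zeta^{*})\Delta z^{*}+D\phi\,\Delta\rho_{m}+\big(m(x)-2\overline{m}\big)\phi-\tilde{h}^{*}\phi=0 .
\]
Taking the inner product with the constant $\phi$ annihilates the first two terms (since $\int_{\Omega}\Delta z^{*}=\int_{\Omega}\Delta\rho_{m}=0$ by the Neumann conditions), and using $\int_{\Omega}m=\overline{m}|\Omega|$ and $\phi^{2}|\Omega|=1$ leaves $-\overline{m}-\tilde{h}^{*}=0$, i.e. $\tilde{h}^{*}=-\overline{m}$. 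When $\overline{m}>0$ this forces $\mathrm{Re}\,\tilde{h}^{*}=-\overline{m}<0$, contradicting $\mathrm{Re}\,\tilde{h}^{*}\geq0$; the borderline subcase $\overline{m}=0$, for which the expansion in Proposition~\ref{locexis} degenerates, is handled analogously by replacing \eqref{steady-state} with the appropriate leading-order expansion of $u_{\lambda}$ near $\lambda=0$, the same estimates applying. I expect the main obstacle to be the uniform-in-$(\lambda_{n},\tau_{\lambda_{n}})$ bound on $\{z_{\lambda_{n}}\}$ of the second paragraph: since $\tau_{\lambda_{n}}$ need not be bounded, the memory term can only be controlled through $|e^{-\mu_{\lambda_{n}}\tau_{\lambda_{n}}}|\leq1$, which is exactly where the quantitative smallness $|D|\max u_{\lambda}<1$ encoded in \textbf{(P)} is indispensable — both to absorb $D e^{-\mu_{\lambda_{n}}\tau_{\lambda_{n}}}\langle\nabla z_{\lambda_{n}},u_{\lambda_{n}}\nabla z_{\lambda_{n}}\rangle$ and to invert $1+Du_{\lambda_{n}}e^{-\mu_{\lambda_{n}}\tau_{\lambda_{n}}}$.
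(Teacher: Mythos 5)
Your proposal is correct and follows essentially the same route as the paper: the paper's own proof of this proposition consists only of introducing the decomposition $X=\mathrm{span}\{1\}\oplus X_1$ (constants plus mean-zero functions) and declaring the rest ``analogous to Propositions \ref{0W} and \ref{TW1}'', which is exactly the contradiction argument you spell out — absorbing the memory term via $|e^{-\mu\tau}|\leqslant1$ and $\textbf{(P)}$, bounding $z_{\lambda_n}$ with the Poincar\'e inequality on $(X_1)_{\mathbb{C}}$, passing to the limit, and reading off $\tilde h^*=-\overline m<0$ from the inner product with the constant eigenfunction. The one caveat is the borderline case $\overline m=0$, which you flag but only wave at; note, however, that the paper's Proposition \ref{locexis} (and hence its omitted argument here) also tacitly requires $\overline m>0$, so this is a gap shared with the paper rather than one you introduced.
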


\begin{proof}
We consider the  following decompositions:
$$
X=K\oplus X_1,\quad Y=K\oplus Y_1,
$$
where
$$
K=\mathrm{span}\{1\},\quad X_1=\left\{y\in X:\int_{\Omega}y(x)dx=0\right\},\quad Y_1=\left\{y\in Y:\int_{\Omega}y(x)\mathrm{d}x=0\right\}.
$$
Then, the analysis is analogous to the proof in Proposition \ref{0W} and \ref{TW1} and we omit it.
\end{proof}
Theorem \ref{conclusion2} is a direct consequence of Proposition \ref{0W2}.

\section{Discussion}

In this paper, we mainly studied model \eqref{model} and examined the joint effect of memorized diffusion and spatial heterogeneity on the dynamics. In the case of ${\bf(A2)}$, it has been shown that the non-constant steady state $u_{\lambda}$ is locally asymptotically stable, as long as the memorized diffusion rate $D$ is not large, see the condition ${\bf(P2)}$. This can be expected from the results for the case $D=0$. However, the scenario will be different for the case ${\bf(A1)}$. We have proved that there exists a critical value $\overline{D}$, such that \eqref{model} will have periodic solution, through Hopf bifurcation for $D>\overline{D}$. Recalling the results in \cite{chuncheng}, we already know that the memorized diffusion term alone can not lead to the existence of periodic solutions. Therefore, such new phenomenon must be induced by the joint effect of memorized diffusion and spatial heterogeneity.

We remark that the method in this context can be used to study \eqref{model} with Dirichlet boundary condition.
We know that \eqref{eigenvalue prombel} has a unique positive principle eigenvalue $\lambda_*$ with Drichlet boundary under assumption $\bf(A1)$ and $\bf(A2)$.
The existence of steady state $u_{\lambda}$ of \eqref{model} in the small neighborhood of $\lambda_*$ can be proved, witch is analogous to the proof in Proposition \ref{stst}. Using the similar method of proving Theorem \ref{conclusion}, we found that $u_{\lambda}$ is always locally asymptotically stable
for relatively small diffusion rate $D$, which is expected from the results of \eqref{proto} in \cite{diffusion}. However, when $D$ exceeds a critical value $\overline{D}$, Hopf bifurcation will occur by increasing $\tau$ under the same condition in Theorem \ref{conclusion}, but the expression of $u_{\lambda}$ is different.
During the analysis, notice that substituting the constant $m$ for $m(x)$ can not change the result, which implies that only the memory delay can  lead to the occurrence of Hopf bifurcation,  generating spatially inhomogeneous periodic solution, see figure \ref{figureD}. This is different of the conclusion with Neumann boundary.

\begin{figure}[htbp]
\centering
	\begin{minipage}{0.45\linewidth}
		\centering\subfigure[$\tau=2$]
{\includegraphics[scale=0.4]{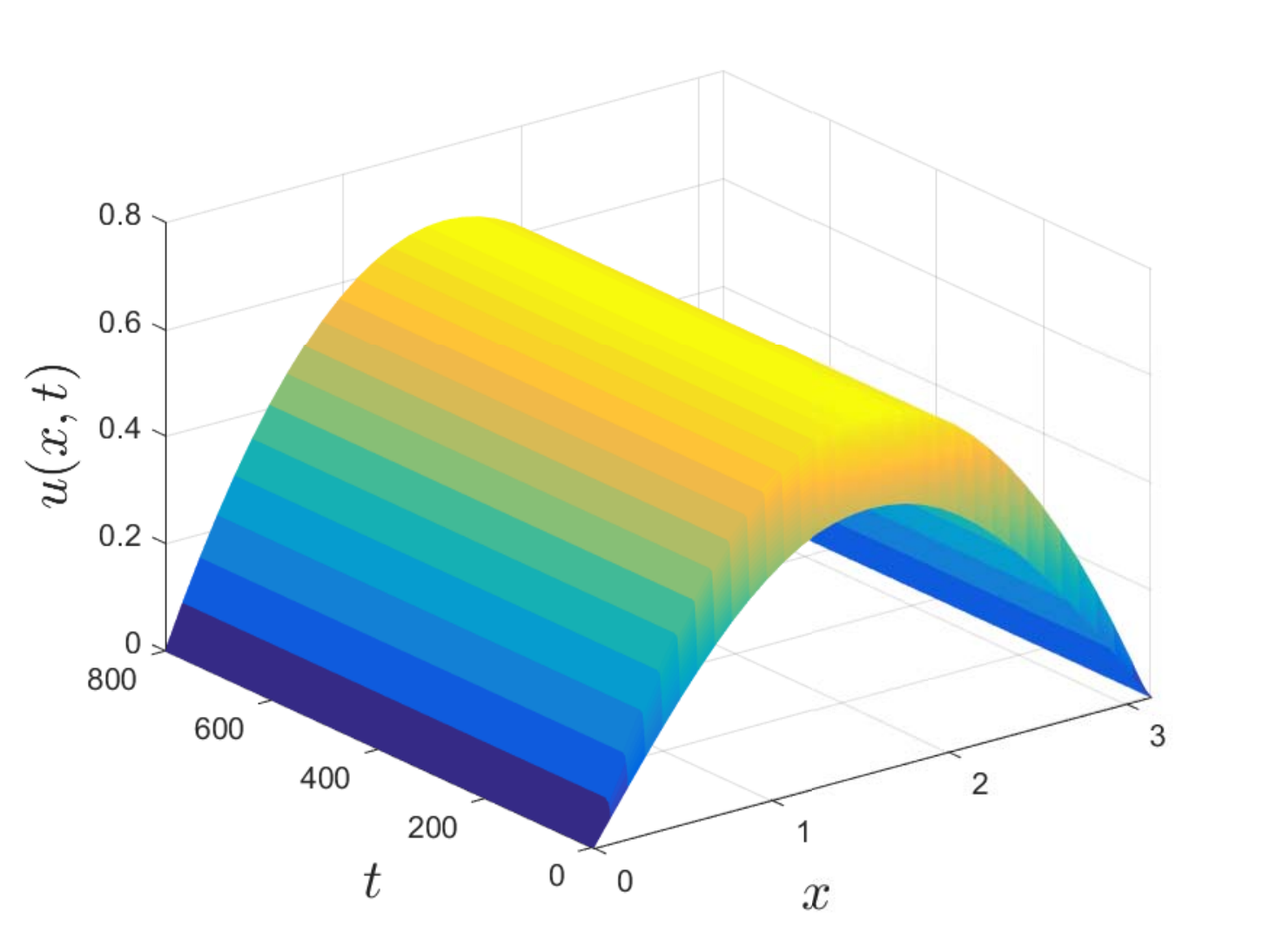}}
			\label{figA}
	\end{minipage}
	\begin{minipage}{0.45\linewidth}
		\centering\subfigure[$\tau=100$]
{\includegraphics[scale=0.4]{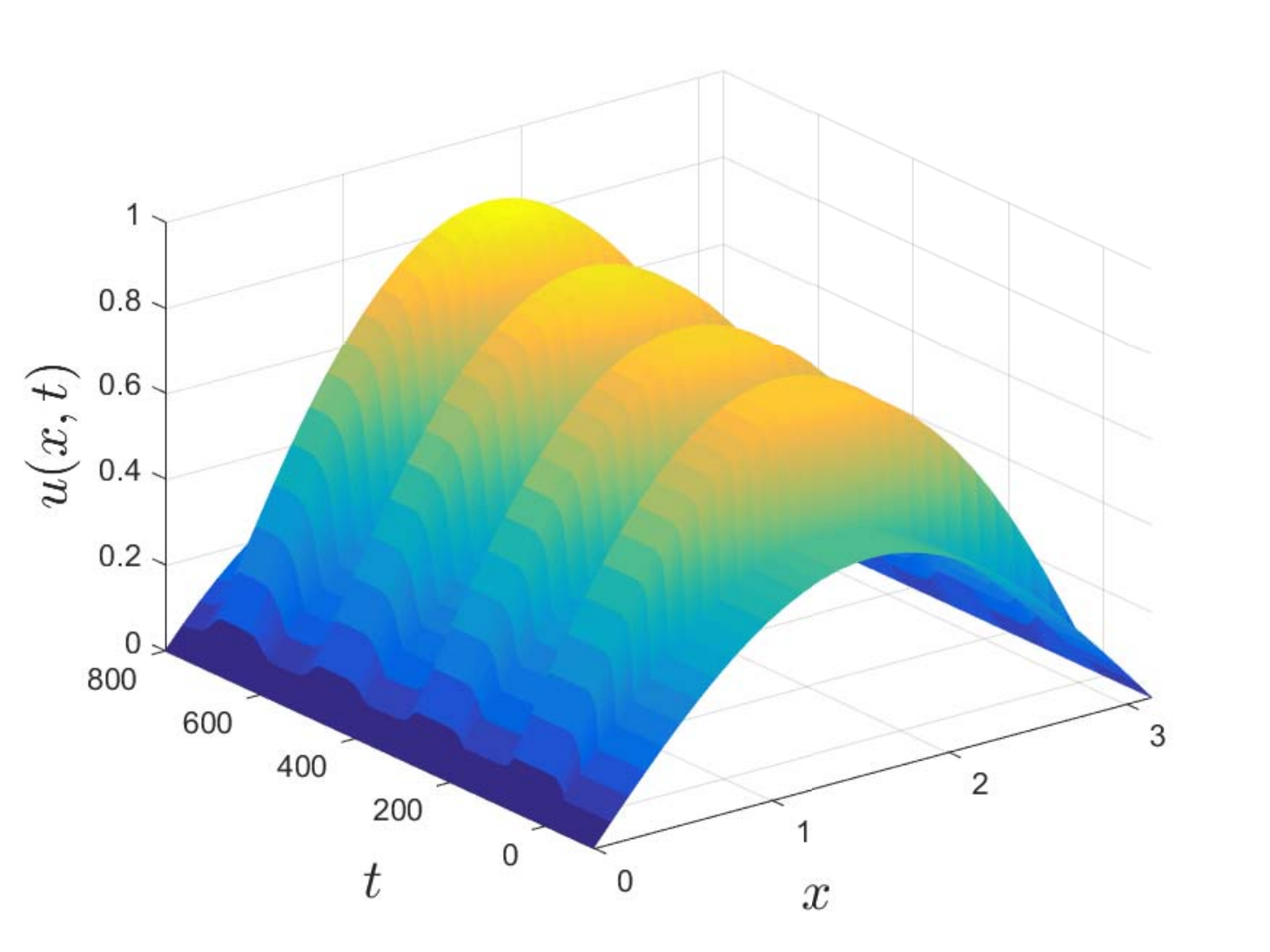}
			\label{figB}}
	\end{minipage}
\caption{Solutions (\ref{model}) with Drichlet boundary condition. $(a)$ The solution will tend to a steady state for $\tau=2$; $(b)$ A periodic solution caused by the increment of $\tau$, through Hopf bifurcation. Here, $m(x)=4$, $\lambda=0.35$ and $D=0.7$, then $\overline{D}=0.5$, $r_1-r_2=\frac{4}{5}>0$ and $r_1+r_2=-\frac{2}{15}<0$.}
\label{figureD}
\end{figure}

\bibliographystyle{plain}

\end{document}